\newtheorem{thm}{Theorem}[section]
\newtheorem{prop}[thm]{Proposition}
\newtheorem{lem}[thm]{Lemma}
\theoremstyle{definition}
\newtheorem{defi}{Definition}
\theoremstyle{remark}
\newtheorem{rmk}{Remark}
\newcommand{\isomto}{\stackrel{\sim}{\longrightarrow}}
\newcommand{\vect}[2]{\begin{pmatrix} #1\\#2 \end{pmatrix}}
\newcommand{\define}[1]{\textbf{#1}}
\newcommand{\cat}[1]{\mathbf{#1}}
\newcommand{\calD}{\mathcal{D}}
\newcommand{\calF}{\mathcal{F}}
\newcommand{\calL}{\mathcal{L}}
\newcommand{\calO}{\mathcal{O}}
\newcommand{\frakg}{\mathfrak{g}}
\newcommand{\frakX}{\mathfrak{X}}
\newcommand{\oc}{\mathrm{oc}}
\newcommand{\NN}{\mathbb{N}}
\newcommand{\ZZ}{\mathbb{Z}}
\newcommand{\RR}{\mathbb{R}}
\newcommand{\CC}{\mathbb{C}}
\newcommand{\KK}{\mathbb{K}}
\newcommand{\TT}{\mathbb{T}}
\DeclareMathOperator{\id}{id}
\DeclareMathOperator{\im}{im}
\DeclareMathOperator{\pr}{pr}
\DeclareMathOperator{\spann}{span}
\DeclareMathOperator{\Hom}{Hom}
\DeclareMathOperator{\Der}{Der}
\begin{document}

\title{The Tangent Functor Monad and Foliations}
\author{Beno{\^{\i}}t Michel Jubin}
\degreesemester{Spring}
\degreeyear{2012}
\degree{Doctor of Philosophy}
\chair{Professor Alan Weinstein}
\othermembers{Professor Robert Bryant\\ Professor Robert Littlejohn}
\numberofmembers{3}
\field{Mathematics}
\campus{Berkeley}

\copyrightpage


\begin{abstract}


In category theory, monads, which are monoid objects on endofunctors, play a central role closely related to adjunctions.
Monads have been studied mostly in algebraic situations.
In this dissertation, we study this concept in some categories of smooth manifolds.
Namely, the tangent functor in the category of smooth manifolds is the functor part of a unique monad, which is the main character of this dissertation.

After its construction and the study of uniqueness properties in related categories, we study its algebras, which are to this monad what representations are to a group. We give some examples of algebras, and general conditions that they should satisfy.
We characterize them in the category of affine manifolds.
We also study an analog of the tangent functor monad and its algebras in algebraic geometry.

We then prove our main theorem: algebras over the tangent functor monad induce foliations on the manifold on which they are defined.
This result links the study of these algebras to the study of foliations.
A natural question is then to characterize foliations which arise this way.
We give some restrictions in terms of the holonomy of such foliations.
Finally, we study in greater detail algebras on surfaces, where they take a very simple form, and we nearly characterize them.

\end{abstract}

\begin{frontmatter}

\tableofcontents

\begin{acknowledgements}
First of all, I wish to thank my advisor Alan Weinstein for the guidance he has provided me during my graduate studies.
His broad interests and his geometric insight have been very valuable to me, and his advice and answers to my many questions were of great help.
It was an honor and a very rewarding experience to work under his supervision.

I would also like to thank Robert Bryant and Robert Littlejohn for accepting to be on my dissertation committee and for their interesting comments on earlier versions of my work.

My gratitude also goes to my earlier professors back in France, who fostered my interest in differential geometry and who made my doctoral studies in Berkeley possible.
I am especially indebted to Pierre Pansu, Jean-Pierre Bourguignon and Denis Auroux (whom I ended up meeting again here in Berkeley!).

Thank you to my academic siblings, Aaron, Hanh, Santiago and Sobhan, for the nice times we had, in Berkeley and Paris, and for teaching me some mathematics in neighboring areas --- this is another benefit of working with Alan: the breadth of his interests spreads to his students.
Thanks also to all the students and post-docs who joined Alan's group from a few weeks to a whole year, and shared with us their work and enthusiasm during our weekly group meetings: Camilo, Leandro, Ludovic, Mitsuhiro, Chiara, Sean, Henry... and Arnaud for his guest appearance.
I also benefited from the visits of Christian Blohmann, Benoit Dherin, Marco Fernandes and Slobodan Simi{\'c}.

Thanks to my officemates in Evans Hall, Jacob, Erica, Boris, Jeff and Chul-Hee, for providing a nice studying environment, and to them and many more students and friends from the department for contributing to the good atmosphere here. This good atmosphere also owes a lot to the staff of the mathematics department, who I sincerely thank for their help.

Finally, these acknowledgements would not be complete without my warmest thanks to my parents.
Thank you for your constant support during these doctoral years, as well as all the years before.
\end{acknowledgements}
\end{frontmatter}

\pagestyle{headings}

\chapter{Introduction}
\label{chap:intro}

This dissertation studies a certain monoidal structure, called a monad, on the tangent functor of some categories of manifolds. Monads play a central role in category and are related to adjunctions.
In this introductory chapter, first we briefly introduce monads and their algebras, then we give an outline of this dissertation, and we end with questions for future research, and a section giving the notations and conventions we use in the rest of the dissertation.

\section{Monads and their algebras}

A monad in a category is a monoid object in the monoidal category of endofunctors of that category (with monoidal product horizontal composition), that is, it is a triple $(T,\eta,\mu)$ where $T$ is an endofunctor of a category $\cat{C}$, and the unit $\eta \colon \id_{\cat{C}} \Rightarrow T$ and the multiplication $\mu \colon T^2 \Rightarrow T$ are natural transformations such that $\mu \circ T\mu = \mu \circ \mu T$ (associativity) and $\mu \circ T\eta = \mu \circ \eta T = \id_{\cat{C}}$ (unit laws) --- where concatenation means horizontal composition (see the section on notations and conventions).

In the same way that groups can be studied via their actions, monads can be studied via their modules, or algebras, which are generalizations of monoid actions. Namely, an algebra over a monad $(T,\eta,\mu)$ in a category $\cat{C}$ is a $\cat{C}$-morphism $h \colon TM \to M$ such that $h \circ \eta_M = \id_M$ (acting by the identity is like not acting) and $h \circ T h = h \circ \mu_M$ (acting successively by two elements is like acting by their product). Morphisms of algebras are defined in an obvious way (morphisms $f \colon M \to N$ making the obvious diagram commute), thus algebras over a given monad form a category.

A basic example of monad is the group monad in the category of sets. The endofunctor of this monad associates to each set the underlying set of the free group on that set. This monad comes from the adjunction between sets and groups given by the forgetful functor and the free group functor. This is a special case of a general construction which associates a monad to any adjunction. For the sake of concision, we give an explicit description of the semigroup monad. The endofunctor associates to a set of letters a set of (nonempty, finite) words in these letters. The unit associates to each set of letters the inclusion of letters (that is, the function which to a letter associates the word of length 1 consisting of that letter). The multiplication associates to each set of letters the operation of concatenation (from the set of sentences to the set of words, corresponding to ``ignoring white spaces''). 

The algebras over the semigroup monad are semigroups. Indeed, an algebra $h \colon TX \to X$ can be thought of as the simultaneous definition of all $n$-fold products, and the axioms for algebras translate exactly into the axioms for semigroups.
Actually, the category of algebras over the semigroup monad forms an adjunction with the category of sets which is isomorphic to the initial adjunction.
This behavior is typical of algebraic structures and their free objects.
To be more precise, one can define the notion of monadicity of an adjunction, and use Beck's monadicity theorem to prove that varieties of universal algebras and their free object functors form monadic adjunctions. See~\cite{maclane} for a detailed exposition.

\section{Outline of the dissertation}

We give an outline of this dissertation and its main results.
It is written in an informal language: rigorous definitions will be given in the section on notations and conventions at the end of this chapter and in the main text.
The following four subsections correspond to Chapters~2 to~5.

\subsection{The tangent functor monad and its algebras}

The tangent functor monad is a monad in the category of smooth manifolds. Its endofunctor is the tangent endofunctor. Its unit is the zero section $\zeta \colon \id_{\cat{Man}} \Rightarrow T$ and its multiplication $\mu = \tau T + T \tau \colon T^2 \Rightarrow T$ is the sum of the two projections from the second tangent bundle to the tangent bundle.
As we will see, natural transformations between the involved functors are scarce, and we can prove that the tangent functor monad is unique among monads on the tangent functor.
Also, there is no comonad on the tangent functor, and the cotangent functor seems to give no satisfactory notion of (co)monad on it.

An algebra over the tangent functor monad is a map $h \colon TM \to M$ where $M$ is a manifold, satisfying the two axioms $h(x,0)=x$ for all $x \in M$, and $h(Th(\xi)) = h \big( \tau_{TM}(\xi) + T\tau_M(\xi) \big)$ for all $\xi \in T^2M$. The latter axiom is more mysterious, and we study some of its consequences, in particular from its expression in charts.

Two basic examples of algebras are the tangent bundle projections $\tau_M \colon TM \to M$, and the free algebras $\mu_M \colon T^2M \to TM$.
We associate to an algebra what we call its rank, which is the rank of its fiberwise derivative along the zero section.
This rank is at most half the dimension of the base manifold, and the two basic examples give the extreme values of this rank: 0 for $\tau_M$, and $\frac12 \dim (TM)$ for $\mu_M$.
We actually prove that an algebra of rank 0 is a tangent bundle projection.

We also give several general methods of constructions of algebras: product algebras, subalgebras, quotient algebras and lifted algebras on covering spaces.
The latter construction is used twice in the following chapters to reduce classification problems to easier cases: it is applied in Chapter~\ref{chap:aff} to holonomy covering spaces, and in Chapter~\ref{chap:foli} to orientable covers of foliations.

The chapter ends with an analogous construction in algebraic geometry: the tangent functor comonad in the category of commutative rings, and its coalgebras.

\subsection{The affine case}

In this chapter, we characterize algebras in the affine category, where the problem is greatly simplified.
To do this, we first solve it for algebras on open subspaces of affine spaces. There, an affine algebra $h \colon TU \to U$ is of the form $h(x,v) = x +Av$ where $A$ is a linear endomorphism with $A^2=0$.

Then, we use the Auslander--Markus theorem, which asserts that an affine manifold has a covering space with trivial holonomy.
Therefore, by the general construction of the previous chapter, we can lift an algebra on a given affine manifold to an algebra on its holonomy covering space, where we use the previous result on domains of charts.

We also study ``semi-affine'' examples, that is, algebras $h \colon TU \to U$ on open subspaces of affine spaces of the form $h(x,v) \mapsto x + A_x v$ where $A$ is a $(1,1)$-tensor field on $U$. If such an $h$ is an algebra, then ($A^2 = 0$ and) the Nijenhuis tensor of $A$ has to vanish. In particular, for a semi-affine algebra of maximal rank, $\im A = \ker A$, so $A$ is an integrable almost tangent structure.

Since the affine category has fewer morphisms, being a natural transformation is less demanding, and indeed there are other monads and comonads on the tangent functor, that we classify.
They pair up to form bimonads and, in certain conditions, Hopf monads, and we give some examples of Hopf modules over these.

\subsection{The foliation induced by an algebra}

This chapter contains the main structural result of this dissertation: algebras induce foliations on their base manifolds.
The dimensions of the leaves of these foliations may vary. Such foliations are called singular, of \v{S}tefan foliations.

To prove this result, we first introduce the accessible sets $h(T_xM)$ of an algebra $h$, and we prove that they form a partition of $M$.
To show that they are the leaves of a foliation, we use results of \v{S}tefan and Sussmann on singular foliations and their tangent distributions, which generalize the classical Fr\"{o}benius integrability theorem.
\v{S}tefan associates to any smooth distribution its accessible sets, which form a foliation, and we use this result as follows.
We prove that the image of the fiberwise derivative of an algebra on the zero section is a smooth distribution, and that its accessible sets in the sense of \v{S}tefan are the accessible sets of the algebra.
Under mild conditions of tameness of the algebra, that smooth distribution is actually integrable: it is the tangent distribution to that foliation.

This result establishes a connection to foliation theory.
Roughly speaking, one can see an algebra as a foliation together with a way of wandering on the leaves.
This also raises a question: which foliations arise this way?
Such ``monadic'' foliations have leaves of dimension at most half the dimension of the manifold.
In addition to this obvious fact, we derive a necessary condition in terms of their holonomy: 1 has to be a common eigenvalue of their linear holonomy.
To prove this result, we have to establish a result with a control theoretic flavor: one can lift a path in a leaf to a path in the ``parameter space'' of that leaf, which is the tangent space of the manifold at the origin of the path.

We end the chapter on questions of orientability of foliations. We see how to use the orientable cover of a foliation and our general result on lifted algebras to reduce the study of regular algebras to the study of orientable ones.

\subsection{Algebras of rank 1 and algebras on surfaces}

Algebras of rank 0 were classified in Chapter~\ref{chap:tfmonad}. Here, we study the next simplest case of algebras of rank 1. Regular algebras of rank 1 assume a very simple form: they are given by following the flow of a vector field for a certain time. Explicitly, they are of the form $h(x,v) = \phi_{\alpha(x,v)}(x)$ where $\phi$ is the maximal flow of the vector field $X$ and $\alpha \in C^\infty(TM)$, the ``time function'', vanishes on the zero section.

The axioms for algebras translate into axioms on the time functions $\alpha$.
One axiom, $\alpha(x,v + \lambda X_x) = \alpha(x,v)$ for any $\lambda \in \RR$, shows that the time function is ``semibasic'', that is, is constant in the direction of the vector field, so can be considered as a transverse structure.
The other one, $\alpha \big( \phi_{\alpha(x,v)}(x), {\phi_{\alpha(x,v)}}'(x) \dot{x} \big) = \alpha(x,v+\dot{x}) - \alpha(x,v)$, is a kind of invariance property along the orbits.

If $\alpha$ is a 1-form (and $X$ is a vector field), then a function $h$ given by $h(x,v) = \phi_{\alpha_x(v)}(x)$ is an algebra if and only if $\alpha$ is basic, that is, semibasic ($i_X \alpha=0$) and invariant ($\calL_X \alpha = 0$).
This does not solve the whole classification problem, since we give an example of an algebra which cannot be given by following a flow for a time given by a 1-form.
However, in the case of surfaces, if the induced foliation has a non-vanishing basic 1-form, then it is possible to write the algebra as $h(x,v) = \phi_{\alpha_x(v)}(x)$ where $\alpha$ is a 1-form, necessarily basic.

Finally, we characterize morphisms between algebras of rank 1, which are akin to conjugation of dynamical systems: in the case of algebras given by 1-forms, an algebra morphism relates the corresponding vector fields (up to rescaling) and pullbacks the 1-form on the codomain to the 1-form on the domain.

\section{Further directions}

This dissertation raises several natural questions, with the ultimate goal of classifying algebras over the tangent monad, and understanding what they correspond to. We see from our main structure theorem that algebras are, roughly speaking, foliations with a way to wander on the leaves. An intermediate step is therefore to characterize monadic foliations.
The restrictions we have found, on the dimensions of the leaves and on the holonomy, are rather mild.
Some further conditions should be found, especially on transversality to certain submanifolds: if a submanifold is transverse to a monadic foliation, it bears certain $p$-plane fields, and this should give obstructions from algebraic topology.
On the other hand, we would like to construct other examples of monadic foliations which are ``far from linear''.

As we have seen in the case of rank 1, there is a kind of transverse structure (given in that case by the time function $\alpha$) of any monadic flow. The theory of transverse structures of foliations, most importantly of Riemannian foliations, has been greatly studied, and some of the techniques developed to study these foliations should prove useful in studying algebras.

Another natural question is to look at other categories, for instance the real or complex analytic theory, since we know that analytic foliation theory bears drastic changes from the smooth category. Also, we would like to pursue the study of coalgebras over the tangent functor comonad in the category of commutative rings (which we should extend to the category of schemes), and the study of Hopf modules in the category of affine manifolds.

The goal of classifying all algebras is probably hopeless, but it would be interesting to study algebras on 3-manifolds, which are of rank at most one. Also, algebras of maximal rank seem to bear some relation with almost tangent structures, and it would be interesting to investigate them.

\section{Notations and conventions}

Throughout this thesis, the letter $\KK$ will denote either the field of real numbers $\RR$ or the field of complex numbers $\CC$, and vector spaces will be over $\KK$.
If $E$ and $F$ are topological vector spaces, then $\calL(E,F)$ (resp. $\calL(E)$) denotes the set of \emph{continuous} linear maps from $E$ to $F$ (resp. to itself). 
A subspace $A$ of a topological vector space $E$ is said to split, or to be complemented, if its has a topological complement, that is, a subspace $B$ such that $+ \colon A \times B \to E$ is a homeomorphism.
In complete metrizable vector spaces (in particular, in Banach spaces), being complemented is equivalent to being closed with a closed complement (this is the open mapping theorem), and in Hilbert spaces, this is equivalent to being closed.
By the Hahn--Banach theorem, a finite dimensional subspace of a Hausdorff locally convex vector space (in particular, of a Banach space) splits.

Categories are denoted in boldface. A general category is denoted by $\cat{C}$. The collection of objects of $\cat{C}$ is denoted by $|\cat{C}|$. By the category of manifolds, denoted by $\cat{Man}$, we mean any notion of category of manifolds and maps with a convenient (in a naive sense) differential calculus and stable under the tangent functor. 
In Chapters~\ref{chap:aff} and~\ref{chap:foli}, we will restrict ourselves to smooth paracompact Hausdorff real or complex Banach manifolds and some subcategories stable under the tangent functor, for instance the categories $\cat{AffMan}$ and $\cat{CompAffMan}$ of affine manifolds (defined in Chapter~\ref{chap:aff}) and complete affine manifolds.
Although some results extend, we will not study in depth the category $\cat{BanAnaMan}$ of Banach analytic manifolds.
In Chapter~\ref{chap:surf}, we consider only real manifolds.

The category of vector bundles and vector bundle maps is denoted by $\cat{VecBun}$ and the category of fibered manifolds and fibered maps by $\cat{FibMan}$.

Finally, recall that natural transformations can be composed vertically and horizontally: if $F, G, H \colon \cat{C} \to \cat{D}$ are three functors and $\alpha \colon F \Rightarrow G$ and $\beta \colon G \Rightarrow H$ are natural transformations, then their \define{vertical composition} is denoted by $\beta \circ \alpha \colon F \Rightarrow H$ and is defined by $(\beta \circ \alpha)_M = \beta_M \circ \alpha_M$ for any object $M$ of $\cat{C}$. If $F', G' \colon \cat{D} \to \cat{E}$ are two other functors and $\gamma \colon F' \Rightarrow G'$, then the \define{horizontal composition} of $\alpha$ and $\gamma$ is denoted by $\gamma\alpha \colon F' \circ F \Rightarrow G' \circ G$ and is defined by $(\gamma\alpha)_M = \gamma_{G(M)} \circ F'(\alpha_M) = G'(\alpha_M) \circ \gamma_{F(M)}$ for any $M \in |\cat{C}|$. When a functor is horizontally composed with a natural transformation, we regard it as the identity natural transformation of itself. Exponential notation refers to vertical composition.

For the general theory of smooth manifolds, including infinite-dimensional Banach manifolds, we refer to~\cite{abra} or~\cite{lang}. As for notations, if $f \colon M \to N$ is a smooth map, its tangent map is denoted by $Tf \colon TM \to TN$ and its tangent map at a point $x \in M$ by $T_xf \in \calL(T_xM,T_{f(x)}N)$. If $M$ and $N$ are modeled respectively on the vector spaces $E$ and $F$ and we are given charts of $M$ at $x$ and of $N$ at $f(x)$, then we will write $f'(x) \in \calL(E,F)$ for the expression of $T_xf$ in these charts, the charts themselves generally remaining implicit. We will also make some identifications when the context is clear: for instance, if $(x,v) \in TM$ and charts are given, we will write $Tf(x,v) = \big( f(x), f'(x)v \big)$.
Finally, recall that a smooth map of locally constant finite rank is a subimmersion.

A chart of $M$ induces charts of $TM$ and $T^2M$. Therefore, if $\xi \in T^2M$, say $\xi \in T_{(x,v)}TM$, and a chart of $M$ is given, we will often write
\[
\xi = (x,v,\dot{x},\dot{v}) \in U \times E \times E \times E
\]
where $U$ is the domain of the chart.
We will also use the formula
\begin{equation*}
T^2f(x,v,\dot{x},\dot{v}) = \big(
f(x),
f'(x)v,
f'(x)\dot{x},
f'(x)\dot{v} + f''(x)(v,\dot{x})
\big)
\end{equation*}
which shows in particular that while $v$ and $\dot{x}$ have intrinsic meanings, $\dot{v}$ does not. Another example is the \define{canonical endomorphism} of $T^2M$, which is given in charts by $(x,v,\dot{x},\dot{v}) \mapsto (x,v,0,\dot{x})$.

An \define{initial submanifold} is a $\cat{Set}$-initial monomorphism in $\cat{Man}$, that is, a smooth injection $i \colon M \to N$ between manifolds such that if $L$ is a third manifold and $f \colon L \to M$ is a set-theoretic map such that $i \circ f$ is smooth, then $f$ is smooth (see~\cite{cats} for details on initial morphisms). This is also called a weak embedding, and it implies that $i$ is an immersion. This definition ensures that $M$ has a unique smooth structure making it an immersed submanifold of $N$ of given dimension.

\chapter{The tangent functor monad and its algebras}
\label{chap:tfmonad}

In this chapter, we define the tangent functor monad in some categories of manifolds, namely, categories with a good notion of differential calculus and stable under the tangent functor.
We briefly recall the general definitions of a monad and its algebras, and what they entail in our case. We refer to~\cite[Chap.VI]{maclane} for more details on monads and their algebras.

We prove the uniqueness of the tangent functor monad.
Then we give some examples and general properties of algebras for this monad, as well as the general constructions of induced, quotient, and lifted algebras.
We end this chapter with a study of an analogue of the tangent functor monad in algebraic geometry.

\section{The tangent functor monad}
\label{sec:tfm}

\subsection{Monads}

We first give a general, concise definition of a monad, that we will make more explicit in our special case.

\begin{defi}[Monad]
A \define{monad} in a category is a monoid object in the endofunctor category of that category.
\end{defi}

The endofunctor we consider here is the tangent functor $T$ of $\cat{Man}$, which to a manifold associates the total space of its tangent bundle (so that we ``forget'' the bundle structure of $TM$) and to a map $f$ associates its tangent map $Tf$.

A monoidal structure on $T$ is, as in a usual monoid, given by a unit $\zeta \colon \id_{\cat{Man}} \Longrightarrow T$ and a multiplication $\mu \colon T^2 \Longrightarrow T$, which are two natural transformations satisfying the unit axioms $\mu \circ \zeta T = \mu \circ T\zeta = \id_T$ and the associativity axiom $\mu \circ \mu T = \mu \circ T\mu$, where juxtaposition denotes the horizontal composition of natural transformations. These can be pictured in the following diagrams of natural transformations:
\begin{equation*}
\xymatrix{
T^3 \ar[r]^{\mu T} \ar[d]_{T \mu} & T^2 \ar[d]^{\mu}\\
T^2 \ar[r]_{\mu} & T
}
\qquad\text{and}\qquad
\xymatrix{
T \ar[r]^{ \zeta T} \ar[dr]_{\id_T} & T^2 \ar[d]^{\mu} & T \ar[l]_{T \zeta} \ar[dl]^{\id_T}\\
& T
}
\end{equation*}

\subsection{The tangent functor monad}

A natural candidate for the unit is the \define{zero section}
\begin{equation}
\zeta \colon \id_{\cat{Man}} \Rightarrow T
\end{equation}
given by $\zeta_M \colon x \mapsto (x,0)$. Therefore we want to find a multiplication $\mu$ satisfying the unit laws, which in charts read
\begin{equation*}
\mu_M (x,v,0,0) = \mu_M (x,0,v,0) = (x,v).
\end{equation*}
Another important natural transformation is the tangent bundle \define{projection}
\begin{equation}
\tau \colon T \Rightarrow \id_{\cat{Man}}
\end{equation}
given by $\tau_M \colon (x,v) \mapsto x$.
The zero section is a retraction for the projection, that is, $\tau \circ \zeta = \id_{\id_{\cat{Man}}}$. This gives two natural candidates for the multiplication: $\tau T$ and $T\tau$, given respectively by $\tau_{TM} \colon (x,v,\dot{x},\dot{v}) \mapsto (x,v)$ and $T\tau_M \colon (x,v,\dot{x},\dot{v}) \mapsto (x,\dot{x})$ in charts for a manifold $M$. Each  satisfies one of the two unit laws. Better, their sum satisfies both. The next theorem shows that it does indeed yield a monad.

\begin{thm}\label{thm:tfm}
The triple $(T, \zeta, \mu)$ where $\zeta$ is the zero section and
\begin{equation}
\mu = \tau T + T \tau
\end{equation}
is a monad in the category of manifolds.
\end{thm}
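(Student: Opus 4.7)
The plan is to verify the three monad axioms in order: naturality of $\mu$, the two unit laws $\mu \circ \zeta T = \mu \circ T\zeta = \id_T$, and associativity $\mu \circ \mu T = \mu \circ T\mu$. Each reduces to an explicit chart computation using the formula for $T^2 f$ recalled in the preliminaries together with the chart expressions $\tau_{TM}(x,v,\dot x,\dot v) = (x,v)$ and $T\tau_M(x,v,\dot x,\dot v) = (x,\dot x)$.

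First I would handle naturality. Since $\tau$ is a natural transformation, its horizontal composites $\tau T$ and $T\tau$ are natural transformations $T^2 \Rightarrow T$, and their values at any $\xi \in T^2M$ both lie in the common fiber $T_xM$ of $TM \to M$ (where $x$ is the base point of $\xi$); hence their fiberwise sum $\mu_M$ is well-defined and smooth. This sum is itself natural because for any smooth $f \colon M \to N$, the tangent map $Tf$ is fiberwise linear over $f$, and therefore commutes with fiberwise addition. The unit laws are then immediate: in charts one has $\zeta_{TM}(x,v) = (x,v,0,0)$ and $T\zeta_M(x,v) = (x,0,v,0)$, and both $\mu_M(x,v,0,0)$ and $\mu_M(x,0,v,0)$ equal $(x,v)$.

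The substantive step is associativity. Writing a point of $T^3M$ in a chart induced from a chart of $M$ as $\xi = (x, v, \dot x, \dot v, \delta x, \delta v, \delta \dot x, \delta \dot v)$, where the last four coordinates give the tangent direction to $(x, v, \dot x, \dot v) \in T^2M$, differentiating $\mu_M$ yields
\[
T\mu_M(\xi) = (x,\, v + \dot x,\, \delta x,\, \delta v + \delta \dot x).
\]
For $\mu_{TM}$, I would apply the formula for $\mu_N$ to $N = TM$ with its chart $U \times E$ and match the resulting coordinates $(y, w, \dot y, \dot w)$ on $T^2(TM)$ with those on $T(T^2M)$ via $y = (x,v)$, $w = (\dot x, \dot v)$, $\dot y = (\delta x, \delta v)$, $\dot w = (\delta \dot x, \delta \dot v)$, giving
\[
\mu_{TM}(\xi) = (x,\, v,\, \dot x + \delta x,\, \dot v + \delta v).
\]
Applying $\mu_M$ to each of these expressions produces the common value $(x,\, v + \dot x + \delta x) \in TM$, which establishes associativity.

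The main obstacle is the coordinate bookkeeping in this last step: one must identify the chart on $T^3M$ arising from the two natural iterations $T(T^2M)$ and $T^2(TM)$. These coincide, because a chart $U \subset E$ of $M$ induces the same chart on $T^3M$ whichever way one iterates $T$, but tracking which coordinate of one view corresponds to which of the other is the only place where care is really needed. Once this identification is set up, the verification of associativity is a short algebraic comparison.
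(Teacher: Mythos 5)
Your proposal is correct and follows essentially the same route as the paper: the paper also verifies the unit laws and associativity by the same direct computation in the induced charts on $T^2M$ and $T^3M$ (arriving at the common value $(x, v+\dot x + \delta x)$), and it handles the well-definedness and naturality of the fiberwise sum $\tau T + T\tau$ in a short preliminary discussion exactly parallel to your first paragraph. Your coordinate bookkeeping for $T\mu_M$ and $\mu_{TM}$ matches the paper's.
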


In charts of a manifold $M$, we have
\begin{equation}
\mu_M(x,v,\dot{x},\dot{v}) = (x,v+\dot{x}).
\end{equation}

\begin{defi}[Tangent functor monad]
The \define{tangent functor monad} is the monad $(T, \zeta, \mu)$ of Theorem~\ref{thm:tfm}.
\end{defi}

We now give an ``element-free'' definition of the sum of two appropriate natural transformations. If $p \colon N \to B$ is a vector bundle and $f, g \colon M \to N$ are smooth maps equalizing $p$, one can define $f +_p g = f+g \colon M \to N$ pointwise in an obvious way. As with any pointwise operation, composition right-distributes over sum: if $h \colon L \to M$, then $(f+g) \circ h = f \circ h + g \circ h$. As for left-distributivity, if $k \colon N \to N'$ is a vector bundle map over $k_0 \colon B \to B'$, then $k \circ (f +_p g) = k \circ f +_{p'} k \circ g$.

Now, let $q, p \colon \cat{C} \to \cat{VecBun}$ be functors.
If $\alpha, \beta \colon q \Rightarrow p$ are natural transformations equalizing $p$ (for any object $c$, $p(c) \circ \alpha_c = p(c) \circ \beta_c$), then we define the sum $\alpha +_p \beta$ ``objectwise''.
It is a natural transformation because $p$ produces vector bundle maps.
Both horizontal composition and vertical composition right-distribute over sum. Some forms of left-distributivity also hold, but they are more technical and we will not need them.

Therefore, when defining $\mu =\tau T +_T T \tau$, we consider that the tangent functor $T$ has values in $\cat{VecBun}$ (and similarly for $\tau$) and it is only after summation that we ``forget'' the bundle structure.

\begin{proof}[Proof of Theorem~\ref{thm:tfm}]
This is a straightforward computation in local coordinates.
For the unit laws,
\[
(x,v) \xrightarrow{\zeta_{TM}} (x,v,0,0) \xrightarrow{\mu_M} (x,v)
\qquad\text{and}\qquad
(x,v) \xrightarrow{T\zeta_{M}} (x,0,v,0) \xrightarrow {\mu_M} (x,v)
\]
and for associativity,
\[
(x,v,\dot{x},\dot{v},x',v',\dot{x}',\dot{v}') \xrightarrow{\mu_{TM}}
(x,v,\dot{x}+x',\dot{v}+v') \xrightarrow{\mu_{M}} (x,v+\dot{x}+x')
\]
and
\[
(x,v,\dot{x},\dot{v},x',v',\dot{x}',\dot{v}') \xrightarrow{T\mu_{M}}
(x,v+\dot{x},x',\dot{v}+v') \xrightarrow{\mu_{M}} (x,v+x'+\dot{x})
\]
which are indeed equal. We can also notice that the unit laws are consequences of right-distributivity of composition over sum and of the relation
\begin{equation*}
\tau T \circ T\zeta = T \tau \circ \zeta T = \zeta \circ \tau \colon T \Rightarrow T
\end{equation*}
which is given by $(x,v) \mapsto (x,0)$ on any manifold.
A similar element-free proof of associativity can be given by carefully using left-distributivity.
\end{proof}

\section{Uniqueness of the tangent functor monad}

\begin{prop}
\label{prop:unique}
The tangent functor monad is the unique monad on the tangent functor in the smooth and analytic categories.
\end{prop}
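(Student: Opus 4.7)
The plan is to prove uniqueness in two stages: classify the possible units $\eta : \mathrm{id}_{\cat{Man}} \Rightarrow T$, then classify the possible multiplications $\mu : T^2 \Rightarrow T$, and finally impose the unit laws to pin down the remaining numerical coefficients. Since Theorem~\ref{thm:tfm} already supplies one such monad, this will suffice.

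For the unit, I would show that the only natural transformation $\eta : \mathrm{id}_{\cat{Man}} \Rightarrow T$ is the zero section $\zeta$. Given any manifold $M$ and any point $x \in M$, I apply naturality to the constant map $c_x : \{*\} \to M$ with value $x$: since $T\{*\}$ is a singleton and $Tc_x$ lands inside the zero section of $TM$ (its differential vanishes identically), naturality forces $\eta_M(x) = Tc_x(\eta_{\{*\}}(*)) = \zeta_M(x)$.

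For the multiplication, I work in a chart of a manifold modeled on $E$ and write $\mu_M(x, v, \dot{x}, \dot{v}) = (x, F(x, v, \dot{x}, \dot{v}))$; the first component is forced by naturality applied together with $\tau$. Naturality with respect to the translations $x \mapsto x + a$ of $E$ eliminates the $x$-dependence of $F$, and naturality with respect to the homotheties $x \mapsto \lambda x$ gives the homogeneity relation $F(\lambda v, \lambda \dot{x}, \lambda \dot{v}) = \lambda F(v, \dot{x}, \dot{v})$. The main obstacle is to rule out a genuinely nonlinear $F$. For this I apply naturality with respect to quadratic maps $f(x) = \tfrac{1}{2} B(x, x)$ for symmetric bilinear forms $B$, expand $T^2 f$ using the formula recorded in the notations section, and compare $\mu \circ T^2 f$ with $Tf \circ \mu$. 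Differentiating the resulting identity in $x$ at the origin and using the vanishing $F(0,0,c) = 0$ forced by $B = 0$ limits produces a functional equation on $F$ that, combined with the degree-one homogeneity, forces $F$ to be linear in $(v, \dot{x}, \dot{v})$ and independent of $\dot{v}$. Naturality with respect to linear maps between model spaces (which have no second-order correction in the $T^2 f$ formula) then shows the two remaining coefficients $\alpha, \beta$ are universal constants, so that $\mu = \alpha \, \tau T + \beta \, T\tau$.

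The unit laws $\mu \circ \zeta T = \mu \circ T\zeta = \mathrm{id}_T$ read $\alpha v = v$ and $\beta v = v$ in charts, forcing $\alpha = \beta = 1$, so that $\mu = \tau T + T\tau$; associativity is then already supplied by Theorem~\ref{thm:tfm}. All the test maps used (constants, translations, homotheties, quadratics, and linear maps) are polynomial, so the argument applies uniformly in the smooth category and in the real and complex analytic categories, where bump functions are unavailable.
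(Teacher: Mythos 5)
Your proposal is correct in its overall strategy and reaches the same two coefficients $a=b=1$ via the unit laws, but the route through the classification of $\mu$ is genuinely different from the paper's. The paper proves Lemma~\ref{lem:T2T} by a fixed-point argument: if $T^2f$ fixes $\xi=(x,v,\dot{x},\dot{v})$ then $Tf$ fixes $\alpha_M(\xi)$, and by constructing $f$ with prescribed first and second derivatives at $x$ it forces $\alpha_M(\xi)\in\spann(v,\dot{x})$; the coefficient functions are then shown to be constant by naturality with respect to immersions from $\KK^2$, with continuity handling the degenerate cases. You instead test naturality only against explicit polynomial maps of the model space: translations kill the $x$-dependence, homotheties give degree-one homogeneity, quadratics $\tfrac12 B(x,x)$ evaluated at $x=0$ give $F(0,0,B(v,\dot{x}))=0$ and hence kill the $\dot{v}$-component, and linear maps make the coefficients universal. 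This works, and is in one respect cleaner than what you describe: since $\mu_E$ is smooth and $F(\lambda v,\lambda\dot{x},\lambda\dot{v})=\lambda F(v,\dot{x},\dot{v})$ for all $\lambda$, a first-order Taylor expansion at the origin already forces $F$ to be \emph{linear} in $(v,\dot{x},\dot{v})$; the quadratic test maps are needed only to annihilate the $\dot{v}$-coefficient, not to establish linearity. Your approach also avoids the case distinctions on $\dim M$ and on the vanishing of $v$ or $\dot{x}$ that appear in the paper, and, as you note, it uses only polynomial test maps, so it applies verbatim in the real and complex analytic categories.

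Two steps are asserted rather than proved and deserve attention. First, the claim that the base component of $\mu_M(x,v,\dot{x},\dot{v})$ is $x$ is not free: the paper devotes the second half of Lemma~\ref{lem:uniqNat} to it, using naturality with respect to $2$-flat maps that distinguish points. In your framework you can instead run the same translation/homothety/quadratic analysis on the base component $P(x,v,\dot{x},\dot{v})$: translations give $P=x+P_0(v,\dot{x},\dot{v})$, homotheties plus smoothness make $P_0$ linear, and the quadratic naturality at $x=0$ forces $P_0=0$ since $B$ ranges over all symmetric bilinear maps; but this should be written out, not attributed vaguely to ``naturality together with $\tau$.'' Second, your chart computation really determines $\mu_E$ on the model space $E$ itself (translations and homotheties are global self-maps of $E$, not of a chart domain); to conclude for an arbitrary manifold $M$ you need naturality with respect to open inclusions and chart maps to transport the formula, a routine but necessary final step.
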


The main reason why this result holds is that natural transformations between functors derived from the tangent functor are scarce: naturality is a strong requirement. A complete classification for a large class of such functors, at least in the smooth category, has been given by Kainz, Kol\'{a}\v{r}, Michor and Slov\'{a}k. We will discuss their results at the end of this section. First, we directly prove the results we need while making minimal assumptions on the categories of manifolds considered.

\begin{lem}
\label{lem:uniqNat}
Let $\cat{C}$ be a subcategory of $\cat{Man}$ stable under the tangent functor.
Let $m, n \geq 0$ be natural numbers and let $\alpha \colon T^m \Rightarrow T^n$ be a natural transformation between these two endofunctors of $\cat{C}$.

If $\cat{C}$ contains constant maps with value any point of any manifold in $\cat{C}$, then $\alpha \circ \zeta^m = \zeta^n$.
In particular, if $m=0$, then $\alpha = \zeta^n$.

If furthermore each point of each manifold of $\cat{C}$ is distinguished by $\cat{C}$-maps which are $m$-flat at that point,
then $\tau^n \circ \alpha = \tau^m$.
In particular, if $n=0$, then $\alpha = \tau^m$.
\end{lem}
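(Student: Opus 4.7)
The plan is to exploit naturality of $\alpha$ applied to two different kinds of test maps: constant maps for the first assertion, and maps that are $m$-flat at a chosen point for the second. The key technical observation, read off the chart formula for $T^2 f$ recalled in the notations section and extended by induction on $m$, is that $T^m f$ restricted to the fibre $(\tau^m_M)^{-1}(x)$ depends only on the $m$-jet of $f$ at $x$. Consequently, if $f$ is $m$-flat at $x$, meaning its $m$-jet there equals that of the constant map $c_{f(x)}$, then $T^m f(\xi) = \zeta^m_N(f(x))$ for every $\xi$ above $x$; applied to $f = c_y$ itself, this shows by induction that $T^k c_y$ is the constant map with value $\zeta^k_M(y)$.

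To prove the first assertion, I would fix $x \in M$, pick any nonempty $N \in |\cat{C}|$, and consider the constant map $c_x \colon N \to M$, which lies in $\cat{C}$ by hypothesis. Writing the naturality square for $\alpha$ at $c_x$ and evaluating at any $\eta \in T^m N$ gives
\[
\alpha_M(\zeta^m_M(x)) = \alpha_M(T^m c_x(\eta)) = T^n c_x(\alpha_N(\eta)) = \zeta^n_M(x),
\]
so $\alpha \circ \zeta^m = \zeta^n$. The special case $m = 0$ is immediate since $T^0 = \id_{\cat{C}}$ and $\zeta^0$ is the identity natural transformation of $\id_{\cat{C}}$.

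To prove the second assertion, I would fix $\xi \in T^m M$, let $x = \tau^m_M(\xi)$, and set $y = \tau^n_M(\alpha_M(\xi))$; the aim is to show $y = x$. For any $\cat{C}$-map $f \colon M \to N$ that is $m$-flat at $x$, the jet-dependence observation gives $T^m f(\xi) = \zeta^m_N(f(x))$, and naturality of $\alpha$ at $f$ combined with the first assertion yields
\[
T^n f(\alpha_M(\xi)) = \alpha_N(\zeta^m_N(f(x))) = \zeta^n_N(f(x)).
\]
Applying $\tau^n_N$ to both sides, then using naturality of $\tau^n$ on the left and the iterated unit identity $\tau^n \circ \zeta^n = \id$ on the right, one obtains $f(y) = f(x)$. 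Since by hypothesis such an $f$ can be chosen to separate $x$ from any prescribed other point, this forces $y = x$; the special case $n = 0$ again follows directly.

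The main obstacle I foresee is making the jet-dependence lemma rigorous: showing precisely that $T^m f$ on $(\tau^m_M)^{-1}(x)$ is a polynomial expression in the derivatives of $f$ at $x$ up to order $m$, and hence that $T^m f(\xi) = \zeta^m_N(f(x))$ whenever $f$ is $m$-flat at $x$. This is an induction on $m$ starting from the chart formula for $Tf$, essentially routine once one sets up the bookkeeping, but it is where all the real content sits. Everything after that is a naturality-diagram chase plus the unit laws of the monad.
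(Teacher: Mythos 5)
Your proof is correct and follows essentially the same route as the paper's: naturality against constant maps gives $\alpha\circ\zeta^m=\zeta^n$, and naturality against $m$-flat maps combined with the first claim and the separation hypothesis gives $\tau^n\circ\alpha=\tau^m$. The only difference is that you make explicit the jet-dependence fact ($T^mf$ on the fibre over $x$ depends only on the $m$-jet of $f$ at $x$, so $T^mf(\xi)=\zeta^m_N(f(x))$ for $f$ $m$-flat at $x$), which the paper uses tacitly; your identification of this as the one point needing a careful induction from the chart formula is accurate.
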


A point $x \in X$ is \emph{distinguished} by a set $E$ of maps from $X$ if for any $y \in X \setminus \{x\}$ there is an $f \in E$ with $f(y) \neq f(x)$. This condition cannot be removed. For example, if $\cat{C}$ is the category of complete affine manifolds and affine maps (where all maps 1-flat at a point are constant), the assignment $\alpha_M(x,v) = x + \lambda v$, $\lambda \in \KK$, defines a natural transformation $\alpha \colon T \Rightarrow \id$ which is different from $\tau$ as soon as $\lambda \neq 0$. We will study the affine category in more detail in Chapter~\ref{chap:aff}.

\begin{proof}[Proof of Lemma~\ref{lem:uniqNat}]
For the first claim, if $x \in M \in |\cat{C}|$, then naturality of $\alpha$ with respect to a map constant equal to $x$ reads $\alpha_M(0^m_x) = 0^n_x$.
For the second claim, if $x \in M \in |\cat{C}|$ and $f \colon M \to N$ is $m$-flat at $x$, then naturality of $\alpha$ with respect to $f$ gives $\alpha_N(0^m_{f(x)}) = T^nf(\alpha_M(x,v))$ for any $v \in T_x^mM$, so by the first claim $0^n_{f(x)} = T^nf(\alpha_M(x,v))$, and applying $\tau^n_N$ gives $f(x) = f(\tau^n_M (\alpha_M(x,v)))$, so $x$ and $\tau^n_M (\alpha_M(x,v))$ are separated by no map which is $m$-flat at $x$, so $\tau^n_M (\alpha_M(x,v)) = x$.
\end{proof}

\begin{rmk}
If any open subset of a manifold in $\cat{C}$ is in $\cat{C}$, along with its inclusion, then naturality with respect to inclusions shows at once that $\alpha_M$ sends fibers over $x$ to fibers over $x$ (which are not vector spaces in general), by using the filter of open neighborhoods of $x$. This can be used in the previous lemma to weaken the hypothesis: it is then enough that each point be told apart by locally defined $m$-flat maps. Of course, all of this is not needed in the smooth category, where we have partitions of unity, but could prove useful in more rigid categories.
Note also that in the proof, we have not used continuity of the functions $\alpha_M$.
\end{rmk}

\begin{lem}
\label{lem:T2T}
A natural transformation from $T^2$ to $T$ in the smooth or analytic category is of the form $a \tau T + b T \tau$ with $a, b \in \KK$.
\end{lem}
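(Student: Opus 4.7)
The plan is to use naturality with a well-chosen family of test maps to determine $\alpha$ locally, reducing to the study of $\alpha_{\KK^n}$ in charts. By the previous lemma (applied with $m=2$ and $n=1$), one already has $\tau \circ \alpha = \tau^2$, so in charts $\alpha_{\KK^n}(x,v,\dot{x},\dot{v}) = \bigl(x,\, F(x,v,\dot{x},\dot{v})\bigr)$ for some smooth (resp.\ analytic) function $F \colon (\KK^n)^4 \to \KK^n$. It suffices to pin down $F$.

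First, naturality with respect to translations $t_a \colon y \mapsto y+a$ gives $F(x+a, v, \dot{x}, \dot{v}) = F(x, v, \dot{x}, \dot{v})$, so $F$ is independent of $x$. Next, naturality with respect to the dilations $m_\lambda \colon y \mapsto \lambda y$ for $\lambda \in \KK$ yields $F(\lambda v, \lambda \dot{x}, \lambda \dot{v}) = \lambda\, F(v, \dot{x}, \dot{v})$, i.e., $F$ is $1$-homogeneous. Combined with smoothness (or analyticity), homogeneity forces $F$ to be $\KK$-linear: fixing $w = (v,\dot{x},\dot{v})$, the smooth map $\lambda \mapsto F(\lambda w)$ is identically $\lambda\, F(w)$, so differentiating at $\lambda = 0$ via the chain rule yields $F(w) = DF(0) \cdot w$. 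Hence $F(v, \dot{x}, \dot{v}) = A_n v + B_n \dot{x} + C_n \dot{v}$ for some linear endomorphisms $A_n,\, B_n,\, C_n$ of $\KK^n$.

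Naturality with respect to all linear maps $L \colon \KK^n \to \KK^m$ then forces the families $(A_n)$, $(B_n)$, $(C_n)$ to be natural in $n$. Specializing to $n=m$, each of $A_n$, $B_n$, $C_n$ commutes with every linear endomorphism of $\KK^n$ and hence is a scalar multiple of the identity; taking $L$ to be the inclusion $\KK \hookrightarrow \KK^n$ as the first coordinate shows the scalars are independent of $n$. So $F(v, \dot{x}, \dot{v}) = a v + b \dot{x} + c \dot{v}$ for universal constants $a, b, c \in \KK$. To kill $c$, I apply naturality with respect to the nonlinear map $s \colon x \mapsto x^2$ on $\KK$: a short computation gives $T^2 s(x, v, \dot{x}, \dot{v}) = (x^2,\, 2xv,\, 2x\dot{x},\, 2v\dot{x} + 2x\dot{v})$, and equating the second components of $\alpha_\KK \circ T^2 s$ and $Ts \circ \alpha_\KK$ yields
\[
a(2xv) + b(2x\dot{x}) + c(2v\dot{x} + 2x\dot{v}) \;=\; 2x\bigl(av + b\dot{x} + c\dot{v}\bigr),
\]
whose leftover $2cv\dot{x}$ must vanish for all $v,\dot{x}$, forcing $c = 0$. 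This gives $\alpha_M(x,v,\dot{x},\dot{v}) = (x,\, av + b\dot{x}) = a\,\tau T + b\, T\tau$, as desired.

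The delicate step is the passage from $1$-homogeneity to linearity, which is where smoothness (or analyticity) is used essentially; it is precisely this step that fails in more rigid categories such as $\cat{AffMan}$, where maps that are flat at a point need not be constant, explaining the richer family of natural transformations found there in Chapter~\ref{chap:aff}.
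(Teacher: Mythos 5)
Your proof is correct, but it takes a genuinely different route from the paper's. The paper argues pointwise on an arbitrary manifold $M$: it first shows $\alpha_M(\xi) \in \spann(v,\dot{x})$ by producing, for each $\xi$, a self-map $f$ of $M$ with prescribed $2$-jet such that $T^2f$ fixes $\xi$ while $Tf$ annihilates any vector outside that span, and it then shows the two coefficient functions are constant via naturality with respect to immersions of $\KK^2$. You instead reduce to the model space and run the classical homogeneous-function-theorem argument: translations kill the $x$-dependence, dilations give $1$-homogeneity (hence, after differentiating at $0$, linearity), equivariance under $\calL(\KK^n)$ forces the three blocks to be scalars independent of $n$, and the single quadratic test map $x \mapsto x^2$ kills the $\dot{v}$-coefficient. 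This is essentially the Kainz--Kol\'a\v{r}--Michor--Slov\'ak style of computation that the paper itself invokes later via Weil algebras; it buys a cleaner and more systematic argument (one that avoids realizing arbitrary $2$-jets by globally defined self-maps of $M$), while the paper's version buys minimal hypotheses on the category, since it never leaves $M$ except through one immersion of $\KK^2$.

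Two small points. First, make the reduction step explicit: $\alpha_M$ is determined by the $\alpha_{\KK^n}$ because every $\xi \in T^2M$ is of the form $T^2g(\xi_0)$ for some map $g$ from $\KK^n$ (or from an open subset, using locality under inclusions and chart isomorphisms) into $M$, and because the target formula $a\,\tau T + b\,T\tau$ is itself natural, so it transports back to $M$; this also implicitly uses Lemma~\ref{lem:uniqNat} to know that $\alpha_M$ preserves base points, exactly as the paper does. Second, your closing diagnosis of $\cat{AffMan}$ is not quite right: affine maps vanishing at the origin are already linear, so the homogeneity-to-linearity step does not fail there. What fails is the availability of the quadratic test map (so the coefficient $c$ survives, yielding the multiplications $\mu^a$) and of $2$-flat maps separating points (so the base component need not be $\tau^2$, yielding the extra terms $a', b', c'$ in the classification of Chapter~\ref{chap:aff}).
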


In the statement of the lemma, the multiplications by scalars of natural transformations are defined similarly to the sum we defined in the previous section.

\begin{proof}
Let $\alpha \colon T^2 \Rightarrow T$ be a natural transformation and let $\xi \in T^2M$ with $M \in |\cat{C}|$. When needed, we will work in charts and write $\xi = (x,v,\dot{x},\dot{v})$. In particular, $\tau_{TM}(\xi) = (x,v)$ and $T \tau_M(\xi) = (x,\dot{x})$.

Let $f \colon M \to M$ be a $\cat{C}$-morphism. Then naturality of $\alpha$ reads $Tf(\alpha_M(\xi)) = \alpha_M(T^2f(\xi))$. Therefore, if $T^2f(\xi) = \xi$, then $Tf(\alpha_M(\xi)) = \alpha_M(\xi)$. In other words, if $\xi$ is fixed by $T^2f$, then $\alpha_M(\xi)$ is fixed by $Tf$. This implies that $\alpha_M(\xi) \in \spann (v, \dot{x}) \subseteq T_xM$. Indeed, if neither of $v$ and $\dot{x}$ is zero and $\alpha_M(\xi) \notin \spann (v, \dot{x})$, then we can construct a function $f$ such that $f'(x)$ is the identity on $\spann(v,\dot{x})$ but is zero on $\spann(\alpha_M(\xi))$, and $f''(x)(v,\dot{x}) = \dot{v} - f'(x) \dot{v}$ (since $f''(x)$ may be any continuous symmetric bilinear map). Then
\[
T^2f(x,v,\dot{x},\dot{v}) = \big( f(x),f'(x)v, f'(x) \dot{x}, f'(x)\dot{v} + f''(x)(v,\dot{x}) \big) = (x,v,\dot{x},\dot{v})
\]
so $T^2f$ fixes $\xi$, so $Tf$ fixes $\alpha_M(\xi)$, so  $\alpha_M(\xi)=0$, which is absurd. Therefore, if none of $v$ and $\dot{x}$ is zero, then $\alpha_M(\xi) \in \spann (v, \dot{x})$, and by continuity of $\alpha_M$, this is actually true also if $v$ or $\dot{x}$ is zero. Therefore there are functions $h_M$ and $k_M$ such that $\alpha_M(\xi) = h_M(\xi) v + k_M(\xi) \dot{x}$.

Now if $f \colon L \to M$ and $\xi_0 \in T^2L$, naturality reads
\[
\big( h_M(T^2f(\xi_0)) - h_L(\xi_0) \big) f'(x_0) v_0 + \big( k_M(T^2f(\xi_0)) - k_L(\xi_0) \big) f'(x_0)\dot{x}_0 =0.
\]

Let $L = \KK^2$ and $\xi_0 = (0,1,1,0)$. There is an $f$ such that $T^2f(\xi_0) = \xi \in T^2M$. Therefore if $f$ is an immersion at 0, then $h_M(\xi) = h_{\KK^2}(\xi_0) = a$ and similarly $k_M(\xi) = k_{\KK^2}(\xi_0) = b$. Therefore by continuity of $h_M$ and $k_M$, if $M$ has dimension at least 2, then $h_M$ and $k_M$ are constant equal to $a$ and $b$ respectively. If $f$ has rank at least 1, then  
$h_M(\xi) + k_M(\xi) = a+b$, so by continuity, if $M$ has dimension at least 1, then $h_M + k_M = a+b$. So in any case (including dimension 0), $\alpha$ has the required form.
\end{proof}

\begin{proof}[Proof of Proposition~\ref{prop:unique}]
If $(T,\alpha,\beta)$ is a monad on the tangent functor, then by the previous lemmas, $\alpha = \zeta$ and $\beta = a \tau T + b T \tau$ for some $a, b \in \KK$. Then the two unit laws impose $a=b=1$.
\end{proof}

There is a dual notion of comonad, namely a comonoid object in the endofunctor category of a category. More explicitly, a comonad is a triple $(T,\epsilon, \delta)$ where the counit $\epsilon \colon T \Rightarrow \id_{\cat{Man}}$ and the comultiplication $\delta \colon T \Rightarrow T^2$ satisfy the counit laws $\epsilon T \circ \delta = T \epsilon \circ \delta = \id_T$ and the coassociativity axiom $\delta T \circ \delta = T \delta \circ \delta$.
In the same vein as the above uniqueness result, we have the following.

\begin{prop}
There is no comonad on the tangent functor in the smooth and analytic categories.
\end{prop}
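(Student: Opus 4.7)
The plan is to suppose a comonad $(T, \epsilon, \delta)$ exists and reach a contradiction by showing that the counit laws combined with naturality leave no room for $\delta$. I would first pin down the counit: since $\epsilon \colon T \Rightarrow \id_{\cat{Man}}$ is a natural transformation from $T^1$ to $T^0$, Lemma~\ref{lem:uniqNat} applied with $m=1$, $n=0$ forces $\epsilon = \tau$; the hypotheses hold in both categories, as constant maps are always available and (by the Remark following that lemma) it is enough to separate points by locally defined $1$-flat maps, for which polynomial expressions in a chart suffice.

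Next I would read off the shape of $\delta_M$ from the counit laws. Writing $\delta_M(x,v) = (a,b,c,d) \in T^2M$ in charts, the two laws $\tau_{TM} \circ \delta_M = \id_{TM}$ and $T\tau_M \circ \delta_M = \id_{TM}$ unfold respectively to $(a,b) = (x,v)$ and $(a,c) = (x,v)$, so $\delta_M(x,v) = \big(x, v, v, \psi_M(x,v)\big)$ for a single unknown function $\psi_M$.

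Finally I would kill this remaining degree of freedom by naturality, testing on $M = \KK$ at the point $(0,1)$ against the polynomial self-map $f(y) = y + w y^2$ for a scalar $w \neq 0$. This $f$ lies in both categories and satisfies $f(0) = 0$, $f'(0) = 1$, $f''(0)(1,1) = 2w$, so $Tf(0,1) = (0,1)$. Naturality then demands $T^2 f \big( \delta_\KK(0,1) \big) = \delta_\KK(0,1)$, but the formula for $T^2 f$ recalled in the notation section modifies exactly the fourth coordinate by $f''(0)(1,1) = 2w$, forcing $w = 0$, a contradiction.

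The main obstacle is recognizing that the free slot $\psi_M$ coming from the counit laws is precisely the coordinate that second-order perturbations $f''(x)(v, \dot x)$ act on additively. Once this is noticed, the test is reduced to the one-dimensional analytic setting, so no bump-function or chart-reduction gymnastics are needed to accommodate the analytic category.
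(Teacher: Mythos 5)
Your proof is correct and takes essentially the same route as the paper's: pin down $\epsilon = \tau$, deduce from the counit laws that $\delta_M(x,v) = (x,v,v,\ast)$, and then kill the remaining slot by naturality against a map with $f(x)=x$, $T_xf = \id$ but $f''(x)(v,v) \neq 0$. Your only variation is to instantiate such a map concretely as $y \mapsto y + wy^2$ on $\KK$, which is a tidy way to see that the argument survives in the analytic category.
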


\begin{proof}
From the previous lemma, the counit $\epsilon$ has to be the projection $\tau$. Then the counit laws mean that $\delta$ should be a natural second-order vector field, that is, of the form $\delta_M(x,v) = (x,v,v,\alpha_M(x,v))$. If $(x,v) \in TM$, let $f \colon M \to M$ be such that $f(x)=x$ and $T_xf = \id_{T_xM}$. Then naturality of $\delta$ in a chart reads $f''(x)(v,v)=0$. But if $v \neq 0$, then in the smooth and analytic categories, there are functions $f$ with $f(x)=x$ and $T_xf = \id_{T_xM}$ but $f''(x)(v,v) \neq 0$.
\end{proof}

\begin{rmk}
The argument of the proof fails in the affine category, where there actually are comonads on the tangent functor, as we will see in Chapter~\ref{chap:aff}.
\end{rmk}

Finally, the cotangent functor is contravariant, and there is no notion of (co)monad on a contravariant ``endofunctor''. To have a covariant endofunctor, we should restrict to the groupoid of isomorphisms of the category we consider and consider the contragredient map $(T^*f)^{-1}$, but then, the unit, for instance, should produce isomorphisms, and there is no hope for this since $M$ and $T^*M$ have different dimensions (if positive).

\subsection{Weil algebras and Weil functors}

The study of large classes of bundle functors and their natural transformations has been carried out by Kainz and Michor~\cite{km}, Eck~\cite{eck} and Luciano~\cite{luci}, extending ideas of Weil~\cite{weil}.
Here we recall the main results of this theory, following the comprehensive exposition in~\cite{kms}, and see how they relate to monads in differential geometry.
In this subsection, we restrict ourselves to the category of finite dimensional smooth paracompact Hausdorff real manifolds.

A \define{bundle functor} is a functor $F \colon \cat{Man} \to \cat{FibMan}$ which is a section of the ``base'' functor $\cat{FibMan} \to \cat{Man}$ (therefore we have a natural transformation $\pi \colon F \Rightarrow \id_{\cat{Man}}$) and which is local: if $i \colon U \to M$ is an inclusion, then $FU$, which is $\pi_U^{-1}(U)$, is equal to $\pi_M^{-1}(U)$, and $Fi \colon FU \to FM$ is the inclusion.

It turns out that the product preserving bundle functors can be classified by algebraic objects, known as Weil algebras.
A \define{Weil algebra} is a unital commutative finite dimensional $\RR$-algebra of the form $A = \RR 1 \oplus N$ where $N$ is a nilpotent ideal.

To each Weil algebra $A$ is associated an endofunctor $\Phi(A) = T_A$ of $\cat{Man}$, called the \define{Weil functor} associated to $A$.
This functor preserves products and there is a projection $\pi_A \colon T_A \Rightarrow \id_{\cat{Man}}$ such that $(T_A,\pi_A)$ is a bundle functor.
Conversely, all product preserving bundle functors ``are'' Weil functors.

Here is an outline of the construction. Let $A = \RR 1 \oplus N$.
It is enough to define $T_A$ on open subsets of $\RR^m$ and on maps in $C^\infty(\RR^m,\RR^k)$ in a compatible way.
If $f \in C^\infty(\RR,\RR)$, then define $T_Af \colon A \to A$  by $T_Af(\lambda 1+n) = \sum_{i \in \NN} \frac{f^{(i)}(\lambda) }{i!} n^i$ if $\lambda 1 + n \in A$.
This is a finite sum since $N$ is nilpotent.
We use the generalization of this formula to several variables, and use it componentwise, to define $T_Af$ for any $f \in C^\infty(\RR^m,\RR^k)$. If $U \subseteq \RR^m$, then simply define $T_AU = U \times N^m$.
Since all manifolds can be constructed by gluing open subsets of $\RR^m$ via diffeomorphisms of $\RR^m$, this is enough to define $T_A$ on objects.
Similarly, $T_A$ is defined on morphisms via their representations in charts.
Then one can show that
\[
T_A = \Hom( C^\infty(-,\RR),A).
\]

The two simplest examples are the following.
\begin{itemize}
\item
If $A = \RR$ then $T_A = \id_{\cat{Man}}$. This is because any multiplicative form on $C^\infty(M,\RR)$ is the evaluation at a point of $M$. The converse is obvious.
\item
If $A$ is the algebra of real dual numbers, that is, $A$ is generated by 1 and $\epsilon$ with $\epsilon^2=0$, then $T_A = T$ is the tangent functor. Indeed, an algebra morphism $\phi \colon C^\infty(M,\RR) \to A$ induces $\phi_0 \colon C^\infty(M,\RR) \to A \twoheadrightarrow \RR$, so $\phi_0$ is the evaluation at a point $x \in M$ by the previous example. Now, $\phi(f - f(x)1) = \phi(f) - f(x)1 = \tilde{\phi}(f) \epsilon$. That $\phi$ is an algebra morphism implies that $\tilde{\phi}$ satisfies the Leibniz rule, so can be identified with a vector in $T_xM$. Again, the converse is obvious.
\end{itemize}

The above construction also shows how a Weil algebra morphism $f \colon A \to B$ yields a natural transformation $\Phi(f) \colon T_A \Rightarrow T_B$.
Then $\Phi$ is a functor from the category $\cat{WeilAlg}$ of Weil algebras and (unital) algebra morphisms to the category $\cat{WeilFunc}$ of Weil functors and natural transformations (\textit{ibid.}, Theorem~35.13).

Both $\cat{WeilAlg}$ and $\cat{WeilFunc}$ are monoidal, with monoidal products the tensor product and horizontal composition respectively. Then the functor $\Phi$ is monoidal ($T_{A \otimes B} = T_A \circ T_B$, \textit{ibid.}, Theorem~35.18) and the main result is the following.

\begin{thm}[Kainz, Kol\'{a}\v{r}, Michor, Slov\'{a}k]
The functor $\Phi \colon \cat{WeilAlg} \to \cat{WeilFunc}$ is a monoidal equivalence of categories.
\end{thm}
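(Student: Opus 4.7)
The plan is to prove three things: $\Phi$ is fully faithful, essentially surjective, and monoidal. The monoidal compatibility is already the content of Theorem~35.18 of~\cite{kms} (the isomorphism $T_{A \otimes B} \cong T_A \circ T_B$), together with the observation that $\Phi$ sends $\RR$ (the unit of $\otimes$ in $\cat{WeilAlg}$) to the identity functor (the unit of $\circ$ in $\cat{WeilFunc}$), as noted in the first bullet point above. So the substantive work lies in the equivalence of categories.

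For fully faithfulness, I would show that any natural transformation $\alpha \colon T_A \Rightarrow T_B$ is determined by its component at $\RR$. Using the description $T_A M = \Hom(C^\infty(M,\RR),A)$, an element of $T_A M$ is an algebra morphism $\phi \colon C^\infty(M,\RR) \to A$, and naturality of $\alpha$ with respect to elements $g \in C^\infty(M,\RR)$ viewed as smooth maps $M \to \RR$ forces $\alpha_M(\phi) = \alpha_\RR \circ \phi$. It remains to check that $\alpha_\RR \colon A \to B$ is a unital algebra morphism. Under the product-preserving identification $T_A(\RR^2) = A \times A$, the algebra operations on $A$ arise by applying $T_A$ to $+,\cdot \colon \RR^2 \to \RR$ and $1 \colon \{\ast\} \to \RR$; naturality of $\alpha$ with respect to these morphisms gives exactly the morphism axioms. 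Conversely, the formula $\Phi(f)_M(\phi) = f \circ \phi$ makes $f \mapsto \Phi(f)$ injective and shows that every natural transformation is of this form.

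For essential surjectivity, given a product-preserving bundle functor $F$, I would set $A := F(\RR)$. Functoriality applied to $+,\cdot,-,0,1$, combined with $F(\RR^2) = A \times A$, equips $A$ with a unital commutative $\RR$-algebra structure. The map $\RR \to F(\RR)$ obtained by applying $F$ to constant maps provides a section of the bundle projection $\pi_\RR$ and yields a decomposition $A = \RR \cdot 1 \oplus N$ where $N = \pi_\RR^{-1}(0)$ is an ideal. Once $A$ is known to be a Weil algebra, one defines a natural map $\eta_M \colon FM \to T_A M$ by $\eta_M(\xi)(g) = F(g)(\xi)$ for $g \in C^\infty(M,\RR)$. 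Locality of $F$ reduces verifying that $\eta$ is an isomorphism to the case $M = U \subseteq \RR^m$, where both $FU$ and $T_A U$ have been identified with $U \times N^m$ by the chart-wise construction of Weil functors recalled in the excerpt, and one checks that $\eta_U$ matches these identifications.

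The hard part, and the one that requires real analytic input rather than formal categorical manipulation, is proving that $N$ is nilpotent and finite-dimensional, so that $A$ genuinely is a Weil algebra. The input needed is a Peetre-type theorem: a local, product-preserving natural operation on $C^\infty$ functions depends only on a finite-order jet of its argument at each point. Given this, $N$ embeds as a quotient of the maximal ideal of some jet algebra $J_0^r(\RR,\RR)$, which is finite-dimensional with nilpotent maximal ideal. The same finiteness is what allows the chart-wise formula $FU = U \times N^m$ to hold and thus underpins the construction of $\eta$ in the previous step; once this is in hand, the remaining verifications of naturality, locality, and compatibility with the algebra structure are routine.
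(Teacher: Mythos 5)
The paper does not actually prove this theorem: its ``proof'' is a one-line citation of Theorem~35.17 and Proposition~35.18 of~\cite{kms}. Your proposal, by contrast, reconstructs the skeleton of the argument that lives in that reference, and the skeleton is the right one. The three reductions you make are exactly those of~\cite{kms}: full faithfulness via the representation $T_AM = \Hom(C^\infty(M,\RR),A)$ and naturality against scalar functions $g \colon M \to \RR$, which forces $\alpha_M(\phi) = \alpha_\RR \circ \phi$ and identifies $\alpha_\RR$ as an algebra morphism using product preservation applied to $+,\cdot \colon \RR^2 \to \RR$; essential surjectivity by extracting $A := F(\RR)$ with its algebra structure from functoriality and the splitting $A = \RR \cdot 1 \oplus N$ with $N = \ker \pi_\RR$; and monoidality from $T_{A \otimes B} \cong T_A \circ T_B$ together with $\Phi(\RR) = \id_{\cat{Man}}$. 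You have also correctly located the one step that is not formal: showing that $N$ is finite-dimensional and nilpotent requires a Peetre-type finiteness theorem (that a local product-preserving bundle functor has locally finite order, so that $A$ is a quotient of a jet algebra). Two caveats. First, that step is asserted, not proved, so your text is an accurate proof outline rather than a complete proof --- which still puts it ahead of the paper, whose treatment defers everything to the reference. Second, a small wording slip: $N$ is realized as a quotient of the maximal ideal of a jet algebra (via the surjection from the jet algebra onto $A$), not embedded in it; the conclusion (finite dimension, nilpotency) is unaffected since both properties pass to quotients. If you intend this as a genuine proof rather than an outline, the Peetre-type input is the single item you would need to supply or cite precisely, since everything else in your sketch is a routine verification.
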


In particular, one can recover the Weil algebra of a Weil functor by $T_A(\RR)=A$.

\begin{proof}
This is a reformulation of Theorem~35.17 and Proposition 35.18 p.307 in~\cite{kms}.
\end{proof}

This makes natural transformations of product preserving bundle functors easier to classify, by first classifying Weil algebra morphisms.

To see how this relates to monads in $\cat{Man}$, let $A$ be a Weil algebra. Then the inclusion $\RR \hookrightarrow A$ induces a natural transformation $\zeta_A \colon \id_{\cat{Man}} \Rightarrow T_A$, and the multiplication $m \colon A \otimes A \to A$ gives a natural transformation $\Phi(m) = \mu_A \colon T_A^2 \Rightarrow T_A$. The unit laws and associativity of $A$ imply that $(T_A,\zeta_A, \mu_A)$ is a monad in $\cat{Man}$. The tangent functor monad is the ``simplest'' example if this type, after the identity monad.

We also obtain at once the uniqueness of natural transformations $\id_{\cat{Man}} \Rightarrow T_A$. The Weil algebra inducing $T^2$ is the tensorial square of the algebra of dual numbers, so with obvious notations, natural transformations $T^2 \Rightarrow T$ correspond to algebra morphisms
\[
f \colon \spann (1,\epsilon_1,\epsilon_2, \epsilon_1 \epsilon_2) \to \spann(1,\epsilon)
\]
and we recover Lemma~\ref{lem:T2T} for real smooth manifolds: $f$ has to map 1 to 1, $\epsilon_i$ to $a_i \epsilon$, and $\epsilon_1 \epsilon_2$ to 0.

\section{Algebras over the tangent functor monad}
\label{sec:alg}

The idea of an algebra, or module, over a monad is similar the one of group representation, or group action, or in the present case: monoid action.

\begin{defi}[Algebra over a monad]
An \define{algebra} over the monad $(T,\eta,\mu)$ in the category $\cat{C}$ and on the object $M \in |\cat{C}|$ is a $\cat{C}$-morphism $h \colon TM \to M$ such that $h \circ \eta_M = \id_M$ and $h \circ Th = h \circ \mu_M$.
\end{defi}

The latter two equations say respectively that acting by the unit is like not acting at all and acting successively by two elements is like acting by their product.

Therefore, an algebra for the tangent functor monad and on a manifold $M$ is a map
\begin{equation*}
h \colon TM \to M
\end{equation*}
such that $h \circ \zeta_M = \id_M$, that is, $h$ is a retraction of the zero section, and $h \circ Th = h \circ \mu_M$. In charts, the two axioms read
\begin{align}
h(x,0) &= x\\
h \big( h(x,v), h'(x,v)(\dot{x},\dot{v}) \big) &= h(x, v+\dot{x})\label{eq:algaxb}
\end{align}
for $x \in M$, $v \in T_xM$ and $(\dot{x},\dot{v}) \in T_{(x,v)}TM$. We shall sometimes call $M$ the \define{base manifold} of $h$.

\begin{rmk}
An algebra for the identity monad is a map $h \colon M \to M$ such that $h \circ h = h$.
\end{rmk}

\subsection{Two basic examples}

The easiest example of an algebra on $M$ is the tangent bundle projection $\tau_M \colon TM \to M$, which deserves the name of \define{trivial algebra}. Another example of algebra is the \define{free algebra} on $TM$ given by the multiplication $\mu_M \colon T^2M \to TM$ itself. That the multiplication of a monad produces algebras for this monad is a general phenomenon (see~\cite{maclane}).
We will see below that these two families are in some precise sense the two extreme examples of algebras.

\subsection{Algebra morphisms and the category of algebras}

\begin{defi}[Morphism of algebras]
Given a monad on the endofunctor $T$ in the category $\cat{C}$, a \define{morphism} from the algebra $h$ on $M$ to the algebra $k$ on $N$ is a $\cat{C}$-morphism $f \colon M \to N$ such that the diagram
\begin{equation*}
\xymatrix{
TM \ar[d]_{Tf} \ar[r]^h & M \ar[d]^f\\
TN \ar[r]_k & N
}
\end{equation*}
commutes.
\end{defi}

Algebras and their morphisms form a category $\cat{C}^T$ concrete over $\cat{C}$ (the Eilenberg--Moore category of $T$). There is a natural adjunction between $\cat{C}$ and $\cat{C}^T$; see~\cite{maclane} for more details on the relations between monads and adjunctions.

As for the two basic examples: A morphism from $\tau_M$ to $\tau_N$ is any map from $M$ to $N$ (there is no additional constraint). A morphism from $\mu_M$ to $\mu_N$ is a map $f \colon TM \to TN$ such that in charts
\[
f(x,v+\dot{x}) = \big( \tau_N(f(x,v)), f(x,v) + \pr_1( f'(x,v)(\dot{x},\dot{v})) \big)
\]
so in particular, $f$ is a fiber bundle map over $f_0 \colon M \to N$.

The equality in the fiber direction with $v=0$ implies $f(x,\dot{x}) = f(x,0) + {f_0}'(x)\dot{x}$. In other words, $f = Tf_0 + (f \circ \zeta_M) \circ \tau_M$, that is, $f$ is an affine tangent map.
Conversely, all affine tangent maps are algebra morphisms between the associated free algebras.

\subsection{Products of algebras}

The category $\cat{Man}$ is monoidal for the cartesian product, and the tangent functor monad is monoidal for this structure. This means that $T$ is a monoidal functor and $\zeta$ and $\mu$ are monoidal natural transformations (all of this is straightforward to check). This implies that the category of algebras over the tangent functor monad is monoidal. Explicitly, if $h$ is an algebra on $M$ and $k$ is an algebra on $N$, then $h \times k$ is an algebra on $M \times N$, and similarly for algebra morphisms.

On the other hand, the tangent map of an algebra need not be an algebra. Actually, the tangent map of any map $f \colon TM \to M$ with $M$ of positive dimension fails to satisfy the unit axiom: in charts, $Tf(x,v,0,0) = (f(x,v),0) \neq (x,v)$ as soon as $v \neq 0$.

\section{General properties of algebras}

Since $h(x,0)=x$, we have the following easy but important result.

\begin{prop}
An algebra is a submersion on a neighborhood of the zero section.
\end{prop}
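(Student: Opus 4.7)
The plan is to differentiate the unit axiom $h \circ \zeta_M = \id_M$ along the zero section and then invoke the openness of the submersion condition.

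Concretely, let $x \in M$ and work in a chart. Writing a point of $T_{(x,0)}TM$ as $(\dot{x},\dot{v}) \in E \times E$ in the usual splitting of the second tangent bundle along the zero section, the horizontal piece $(\dot{x},0)$ is precisely $T_x\zeta_M(\dot x)$. Differentiating $h \circ \zeta_M = \id_M$ at $x$ gives
\begin{equation*}
T_{(x,0)}h \circ T_x\zeta_M = \id_{T_xM},
\end{equation*}
so the restriction of $T_{(x,0)}h$ to the horizontal subspace is the identity on $T_xM$. In particular $T_{(x,0)}h \colon T_{(x,0)}TM \to T_xM$ is surjective; moreover, since $T_x\zeta_M$ provides a continuous linear right inverse, $T_{(x,0)}h$ is a split surjection of Banach spaces (equivalently, its kernel is a closed complemented subspace, which is precisely what is required for $h$ to be a submersion at $(x,0)$ in the Banach category of the excerpt).

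Finally, I would invoke the standard fact that the set of points at which a smooth map between Banach manifolds is a submersion is open: if $T_{p_0}h$ is a split surjection with continuous right inverse $\sigma$, then for $p$ close to $p_0$ the composition $T_p h \circ \sigma$ is close to the identity of $T_{h(p_0)}M$, hence invertible by a Neumann-series argument, so $T_p h$ remains a split surjection. Applying this to each $(x,0)$ yields an open neighborhood of the zero section on which $h$ is a submersion.

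The only point that requires a little care is the splitting claim, because in the Banach setting ``submersion'' demands a topological, not merely algebraic, complement of the kernel; but the existence of the continuous right inverse $T_x\zeta_M$ handles this automatically, so there is no real obstacle and the proposition follows in essentially one line once the differential along the zero section has been identified.
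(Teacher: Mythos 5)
Your argument is correct and is essentially the paper's own proof: the paper likewise differentiates the unit axiom along the zero section to get surjectivity of $h'(x,0)$ and then appeals to openness of the submersion condition. The only difference is that you spell out the Banach-space details (the continuous right inverse $T_x\zeta_M$ giving a split surjection, and the Neumann-series openness argument), which the paper leaves implicit.
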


\begin{proof}
Differentiating $h(x,0) = x$, we obtain $h'(x,0) = \pr_1$, which is surjective. Therefore, $h$ is a submersion on the zero section, and submersivity is an open condition.
\end{proof}

Several important properties of algebras can be derived from the second axiom~\eqref{eq:algaxb} by differentiation and setting the variables to special values.
We define the following maps and spaces. If $h \colon TM \longrightarrow N$ is a map and $x \in M$, we define the map
\begin{equation}
h_x \colon T_xM \longrightarrow N
\end{equation}
by $h_x(v) = h(x,v)$ if $v \in T_xM$. If furthermore $N=M$ and $h(x,0)=x$ for all $x \in M$, we define the endomorphism
\begin{equation}
A_x = {h_x}'(0) \in \calL(T_xM)
\end{equation}
and the linear subspace
\begin{equation}
\calD_x = \im A_x \subseteq T_xM.
\end{equation}

We call $A \in \Gamma(\calL(TM))$ (or $A^h$ when needed) the \define{endomorphism associated to $h$}, and $\calD \subseteq TM$ (or $\calD^h$ when needed) the \define{distribution induced by $h$} on the base manifold $M$.

As for the two basic examples: For the tangent bundle projection $\tau_M$, one has $A_x = 0$, so $\calD_x = \{0\}$ and $\ker A_x = T_xM$.

In the case of the free algebra $h=\mu_M$, which is an algebra on $TM$, one has $h_{(x,v)} \colon (\dot{x},\dot{v}) \mapsto (x,v+\dot{x})$ for any $(x,v) \in TM$, so $A_{(x,v)} \colon (\dot{x},\dot{v}) \mapsto (0,\dot{x})$, so $A$ is the canonical endomorphism of $T^2M$. Therefore, $\calD_{(x,v)} = \im A_{(x,v)} = \ker A_{(x,v)} = T_{(x,v)}^{\mathrm{vert}}TM$ is the space of vertical vectors.

For a product of algebras, it is easy to check that $A^{h \times k}_{(x,y)} = A^h_x \times A^k_y$ as endomorphisms of $T_{(x,y)}(M \times N) \simeq T_xM \times T_yN$, and $\calD^{h \times k}_{(x,y)} = \calD^h_x \times \calD^k_y$.

With the notations above, differentiating the first axiom for algebras, we get $h'(x,0)(\dot{x},\dot{v}) = \dot{x} + A_x\dot{v}$.
Using this in the second axiom with $v=0$, we obtain $h(x, \dot{x} + A_x \dot{v} ) = h(x,\dot{x})$,
therefore
\begin{equation}\label{eq:D-invar}
h(x, \dot{x} + \calD_x) = h(x,\dot{x})
\end{equation}
hence
\begin{equation}\label{eq:incl-ker}
\calD_x \subseteq \ker {h_x}'(\dot{x})
\end{equation}
for all $\dot{x} \in T_xM$. In particular, for $\dot{x}=0$, this reads $\im A_x \subseteq \ker A_x$, that is,
\begin{equation}\label{eq:nilp}
{A_x}^2 = 0.
\end{equation}

We will see in Chapter~\ref{chap:aff} on affine algebras that, conversely, any linear endomorphism squaring to 0 can appear as the associated endomorphism of an algebra at some point.

Now we can see in which sense the two families of examples we gave are two extreme cases: for the tangent bundle projections, $A_x=0$, while for the free algebras, $\im A_x = \ker A_x$.

We define the \define{rank} of an algebra at $x$ to be the rank, finite or infinite, of the associated endomorphism $A_x$. It is a lower semicontinuous function of $x$. The rank of an algebra is defined to be the maximum of the ranks at all points.
If $M$ has finite dimension, then the rank of an algebra on $M$ is at most half the dimension of $M$, since ${A_x}^2=0$. All these values occur: for instance, the algebra
\[
\tau_{\KK^m} \times \mu_{\KK^n}
\]
on $\KK^m \times (T\KK)^n \simeq \KK^{m+2n}$ has rank $n$. Other examples arise from affine algebras, studied in the next chapter.

We say that an algebra is \define{regular} if $\ker A$ and $\im A$ are subbundles.
For algebras on Hilbert manifolds, it is sufficient that $\im A$ be a subbundle, which is the case as soon as $A$ has locally constant finite rank.

By analogy with the finite-dimensional case, we say that an algebra has \define{maximal rank} if it is regular and $\im A = \ker A$.
On Hilbert manifolds, it is sufficient that $\im A = \ker A$.

Finally, we say that an algebra on $M$ is \define{tame at $x \in M$} if $h_x$ is a subimmersion at the origin, which is the case if it has locally constant finite rank. An algebra is called \define{tame} if it is tame at all points.

We have derived an important property of the kernels of the ${h_x}'(\dot{x})$, namely that $\calD_x$ is included in all of them. Now we are going to derive a similar property of the images of the ${h_x}'(\dot{x})$, namely that $\calD$ contains all of them.
Differentiating the second axiom for algebras with respect to $(\dot{x},\dot{v})$ gives
\[
{h_{h(x,v)}}' \big( h'(x,v) (\dot{x},\dot{v}) \big) \circ h'(x,v) = {h_x}'(v+\dot{x}) \circ \pr_1
\]
so if $(\dot{x},\dot{v})= (0,0)$, then
\[
A_{h(x,v)} \circ h'(x,v) = {h_x}'(v) \circ \pr_1
\]
and since $\pr_1$ is surjective,
\begin{equation}\label{eq:incl-im}
\im {h_x}'(v) \subseteq \calD_{h(x,v)}
\end{equation}
with equality for small $v$, for instance for $(x,v)$ in the neighborhood of the zero section where $h$ is a submersion.
This has the following direct consequence.

\begin{prop}
A regular algebra is tame.
\end{prop}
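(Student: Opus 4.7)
The plan is to read off the rank of $h_x$ near the origin from the distribution $\calD$. Recall equation~\eqref{eq:incl-im}: $\im {h_x}'(v) \subseteq \calD_{h(x,v)}$ with equality whenever $v$ lies in the (open) neighborhood of the zero section on which $h$ is a submersion. So near $0 \in T_xM$, the rank of $h_x$ at $v$ coincides with the dimension of the fiber $\calD_{h(x,v)} = (\im A)_{h(x,v)}$.

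Now suppose $h$ is regular, so that $\im A$ is a subbundle of $TM$. Then in a trivializing neighborhood $U$ of $x$ in $M$, the fibers $(\im A)_y$ for $y \in U$ all have the same (finite) rank, equal to the local rank of the subbundle. By continuity of $h$ we can shrink $V \subseteq T_xM$ around $0$ so that $h(\{x\} \times V)$ lies inside $U$ and $h$ is a submersion on $\{x\} \times V$. For $v \in V$ we then have
\begin{equation*}
\dim \im {h_x}'(v) \;=\; \dim \calD_{h(x,v)} \;=\; \mathrm{rk}(\im A)|_U,
\end{equation*}
so ${h_x}'$ has locally constant finite rank near $0$. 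By the sufficient condition recorded in the definition, $h_x$ is a subimmersion at the origin, i.e.\ $h$ is tame at $x$. Since $x \in M$ was arbitrary, $h$ is tame.

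The only point that requires a moment's care is the transition from ``$\im A$ is a subbundle'' to ``the fiber dimension of $\calD$ is locally constant''; in the finite-dimensional setting this is immediate from the definition of subbundle, and in the Hilbert manifold setting it is exactly what is built into the notion of (split) subbundle used in the paper. I do not foresee a genuine obstacle: the argument is essentially a one-line consequence of equation~\eqref{eq:incl-im} together with the openness of the submersion locus around the zero section.
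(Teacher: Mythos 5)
Your argument is correct and is exactly the route the paper intends: the proposition is stated there as a direct consequence of the inclusion $\im {h_x}'(v) \subseteq \calD_{h(x,v)}$ together with the observation that equality holds on the neighborhood of the zero section where $h$ is a submersion, combined with the subbundle hypothesis on $\im A$ to get locally constant rank of $h_x$ near the origin. The only caveat, which applies equally to the paper's one-line justification, is that the step ``locally constant rank $\Rightarrow$ subimmersion'' is only recorded for finite rank, so the genuinely infinite-rank regular case is left at the same informal level in both arguments.
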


As an easy application, we classify the algebras of rank zero, that is, the algebras with associated endomorphism the zero map, and their morphisms.

\begin{prop}[Algebras of rank 0 and their morphisms]
The algebras of rank 0, and in particular the algebras on manifolds of dimension at most 1, are the tangent bundle projections.
The morphisms between algebras of rank 0 are all the maps between the corresponding base manifolds.
\end{prop}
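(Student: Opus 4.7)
The plan is to combine the two containments \eqref{eq:incl-ker} and \eqref{eq:incl-im} derived above for the second axiom, which when the rank is zero collapse everything very quickly.

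First, suppose $h \colon TM \to M$ is an algebra with rank $0$, so that $A_x = 0$, hence $\calD_x = \{0\}$, for every $x \in M$. Then the inclusion \eqref{eq:incl-im}, namely $\im {h_x}'(v) \subseteq \calD_{h(x,v)}$, forces ${h_x}'(v) = 0$ for all $x \in M$ and all $v \in T_xM$. Since $T_xM$ is a vector space (in particular, connected), the map $h_x \colon T_xM \to M$ is therefore constant, and its value at $0$ is $h_x(0) = x$ by the unit axiom. Thus $h(x,v) = x = \tau_M(x,v)$, which gives $h = \tau_M$. Conversely, $\tau_M$ is an algebra with $A_x = 0$.

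For the statement on low dimension, recall from \eqref{eq:nilp} that $A_x^2 = 0$. If $\dim T_xM \leq 1$, then a nilpotent endomorphism of a space of dimension at most $1$ is zero, so $A_x = 0$ at every point, and the previous paragraph applies.

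For the morphisms, let $f \colon M \to N$ be any smooth map. Naturality of $\tau$ reads $\tau_N \circ Tf = f \circ \tau_M$, which is exactly the commutativity of the diagram
\begin{equation*}
\xymatrix{
TM \ar[d]_{Tf} \ar[r]^{\tau_M} & M \ar[d]^f\\
TN \ar[r]_{\tau_N} & N
}
\end{equation*}
defining an algebra morphism from $\tau_M$ to $\tau_N$. Hence every smooth map $f \colon M \to N$ is an algebra morphism, and this exhausts all such morphisms. The main subtlety — really the only thing to be careful about — is that in the first step we do use \eqref{eq:incl-im} for arbitrary $v$, not merely near the zero section; but \eqref{eq:incl-im} was established for all $v \in T_xM$ in the discussion preceding the statement, so this is automatic.
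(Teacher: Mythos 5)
Your proposal is correct and follows essentially the same route as the paper: both use the inclusion~\eqref{eq:incl-im} to force ${h_x}'(v)=0$ and hence $h_x$ constant equal to $x$, both reduce the low-dimensional case to the bound on the rank coming from $A_x^2=0$, and the morphism statement is the naturality of $\tau$ (which the paper dismisses as obvious). Your explicit remark that~\eqref{eq:incl-im} holds for all $v$, not just near the zero section, is a fair point of care and is indeed covered by the derivation in the text.
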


\begin{proof}
From the inclusion~\eqref{eq:incl-im}, for any $(x,v) \in TM$ we have $\im {h_x}'(v) \subseteq \calD_{h(x,v)} = \{0\}$, so all the $h_x$ are constant, and necessarily equal to $h(x,0)=x$.
By the above, the rank of an algebra on a manifold of dimension at most 1 is at most $\frac12$, so it is 0.
The statement concerning morphisms is obvious.
\end{proof}

We summarize the inclusions~\eqref{eq:incl-ker} and~\eqref{eq:incl-im} informally by saying that $\calD_x$ is included in all the kernels and contains all the images.

We can actually derive a slight strengthening of~\eqref{eq:incl-im}. Differentiating the second axiom for algebras with respect to $(v,\dot{x},\dot{v})$ at $v=\dot{v}=0$ gives
\[
h'(x,\dot{x}) \big( A_x v', \dot{x}' + A_x \dot{v}' +  h''(x,0)((0,v'),(\dot{x},0)) \big) = {h_x}' (\dot{x}) (\dot{x}'+v').
\]
We write symbolically $(\calD_x, \ast) = {{\tau_M}'(x,v)}^{-1} \calD_x$.
Since $v'$ and $\dot{x}'$ are arbitrary in the above equation, we obtain
\begin{equation}
h'(x,\dot{x}) (\calD_x, \ast) = \im {h_x}'(\dot{x}) \subseteq \calD_{h(x,\dot{x})}
\end{equation}
where the last inclusion is~\eqref{eq:incl-im}. We will see in Chapter~\ref{chap:foli} that, for tame algebras, this is an immediate consequence of the interpretation of the $\calD_x$'s as tangent spaces of leaves of a foliation.

\begin{rmk}(Restriction to connected manifolds)
If $h$ is an algebra on $M$ and $(x,v) \in TM$, then $h(x,v)$ is in the connected component of $x$. Indeed, $x$ and $h(x,v)$ are the endpoints of the path $[0,1] \to M, t \mapsto h(x,tv)$. Therefore, we can restrict our study to the category of connected manifolds without loss of generality.
\end{rmk}

\subsection{The intertwining property of algebra morphisms}

We have the following easy fact concerning algebra morphisms.

\begin{prop}\label{prop:intertw}
The tangent map of a morphism between algebras intertwines their associated endomorphisms, and in particular preserves the induced distributions.
More precisely, if $f \colon M \to N$ is a morphism from the algebra $h$ on $M$ with associated endomorphism $A$ to the algebra $k$ on $N$ with associated endomorphism $B$, then the diagram
\begin{equation*}
\xymatrix{
TM \ar[d]_{Tf} \ar[r]^A & TM \ar[d]^{Tf}\\
TN \ar[r]_B & TN
}
\end{equation*}
commutes, and in particular $Tf (\calD^h) \subseteq \calD^k$ and $Tf (\ker A) \subseteq \ker B$ and $B \circ Tf \circ A = 0$.
\end{prop}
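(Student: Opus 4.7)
The plan is to differentiate the morphism identity $f \circ h = k \circ Tf$ fiberwise at the zero section. Restricting both sides to a single fiber $T_xM$, the identity becomes
\[
f \circ h_x = k_{f(x)} \circ T_xf
\]
as maps $T_xM \to N$, since $Tf(x,v) = (f(x), T_xf \cdot v)$ and therefore $k \circ Tf$ on the fiber over $x$ is $k_{f(x)}$ precomposed with the linear map $T_xf$. Now take the derivative at $0 \in T_xM$. Using the chain rule together with $h_x(0)=x$, the left-hand side yields $T_xf \circ (h_x)'(0) = T_xf \circ A_x$. Using that $T_xf$ is linear and $T_xf(0)=0$, the right-hand side yields $(k_{f(x)})'(0) \circ T_xf = B_{f(x)} \circ T_xf$. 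This proves commutativity of the square, i.e.\ $Tf \circ A = B \circ Tf$.

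From this intertwining identity, the three stated consequences are formal. For the distributions, one has $T_xf(\calD^h_x) = T_xf(\im A_x) = \im(T_xf \circ A_x) = \im(B_{f(x)} \circ T_xf) \subseteq \im B_{f(x)} = \calD^k_{f(x)}$. For the kernels, if $v \in \ker A_x$ then $B_{f(x)}(T_xf \cdot v) = T_xf(A_x v) = 0$, so $T_xf \cdot v \in \ker B_{f(x)}$. For the last relation, applying the intertwining identity once and invoking the nilpotency relation $A^2 = 0$ from \eqref{eq:nilp} gives $B \circ Tf \circ A = Tf \circ A \circ A = Tf \circ A^2 = 0$.

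The whole proposition reduces to one chain-rule computation, so there is no real obstacle; the only thing to watch is to restrict to a single fiber before differentiating, so that the base point stays constant and the fiber derivative $A_x = (h_x)'(0)$ is exactly what is produced on the left. Everything else is a one-line consequence.
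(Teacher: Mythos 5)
Your proof is correct and is essentially the paper's own argument: the paper likewise differentiates the morphism axiom $k(f(x),f'(x)v)=f(h(x,v))$ with respect to $v$ at $v=0$ to obtain the intertwining square, with the three consequences following formally. Your version simply spells out the fiberwise restriction and the chain-rule step that the paper leaves implicit.
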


\begin{proof}
Differentiating the morphism axiom $k(f(x),f'(x)v)=f(h(x,v))$ with respect to $v$ at $v=0$ gives the result.
\end{proof}

If at least one of the source and the target algebras is a tangent bundle projection, then the intertwining condition is also sufficient. Namely, the morphisms from $\tau_M$ are the maps $f$ with $B \circ Tf = 0$, and the morphisms to $\tau_N$ are the maps $f$ with $Tf \circ A = 0$ (as we will see in Chapter~\ref{chap:foli}, this means that $f$ has to be constant on the leaves of $h$, which are the sets $h(x,\ast)$).

\section{Subalgebras, quotient algebras, and lifted algebras}

In this section, we construct new algebras from old ones. The third subsection about covering algebras will be used in classifying affine algebras in Chapter~\ref{chap:aff} and to reduce the study of algebras to oriented ones (to be defined) by lifting any algebra to its oriented cover.

\subsection{Subalgebras}

If $h$ is an algebra on a manifold $M$ and $i \colon N \to M$ is a smooth injection, then there is at most one (set-theoretic) map $h^i \colon TN \to N$ making the diagram
\begin{equation*}
\xymatrix{
TN \ar@{^(->}[d]_{Ti} \ar[r]^{h^i} & N \ar@{^(->}[d]^i\\
TM \ar[r]_h & M
}
\end{equation*}
commute, and there is such a map if and only if $h(\im Ti) \subseteq \im i$. In that case, if $i$ is a weak embedding (that is, a $\cat{Set}$-initial immersion), then $h^i$ is smooth. It is an algebra: indeed,
\[
i \circ h^i \circ \zeta_N = h \circ Ti \circ \zeta_N = h \circ \zeta_M \circ i = i
\]
so $h^i \circ \zeta_N = \id_N$, and
\[
i \circ h^i \circ Th^i = h \circ Ti \circ Th^i = h \circ Th \circ T^2i = h \circ \mu_M \circ T^2i =  h \circ Ti \circ \mu_N = i \circ h^i \circ \mu_N
\]
so $h^i \circ Th^i = h^i \circ \mu_N$.
This algebra is said to be \define{induced} by $h$ on $N$ (when $i$ is implied, for instance if it is the inclusion of a weakly embedded submanifold), or to be a \define{subalgebra} of $h$.

An example is the inclusion of a union of leaves of $h$ (sets $h(x,\ast)$). If these leaves form a totally path-disconnected set in the leaf space, then the induced algebra is the tangent bundle projection: if $(x,v) \in TN$, then $v \in T_x(h(x,\ast))$ which we will see in Chapter~\ref{chap:foli} makes sense (the leaves are weakly embedded submanifolds) and is equal to $\calD_x$, which is included in $\ker A_x$, therefore $h^i(x,v)=h(x,v)=x$.

\subsection{Quotient algebras}

An equivalence relation $\Gamma$ on a manifold $M$ is called \define{regular} if there is a manifold structure on the quotient set $M/\Gamma$ making the quotient (set-theoretic) map $p \colon M \twoheadrightarrow M/\Gamma$ a submersion.
Such a manifold structure is then unique, and it induces the quotient topology.
Recall Godement's criterion (see~\cite{serre}) that an equivalence relation on a manifold $M$ is regular if and only if its graph $\Gamma$ is an embedded submanifold of $M \times M$ and $\pr_1 \colon \Gamma \to M$ (equivalently $\pr_2$) is a submersion.
Note that this gives the most general example of quotient manifold, since any submersion $p \colon M \to N$ induces the regular equivalence relation ``being in the same fiber'', for which $N$ is the quotient manifold.
From now on, we identify an equivalence relation with its graph, and $T(M \times M)$ with $TM \times TM$.

The following fact is probably classical (it is for instance a special case of more general theorems on quotient Lie groupoids), but we reprove it here for completeness.

\begin{lem}
If $\Gamma$ is a regular equivalence relation on $M$, then $T\Gamma$ is a regular equivalence relation on $TM$, and the map
\[
\widetilde{Tp} \colon TM/T\Gamma \to T(M/\Gamma)
\]
induced by $Tp \colon TM \to T(M/\Gamma)$ is a diffeomorphism.
\end{lem}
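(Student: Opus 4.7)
The plan is to identify $T\Gamma$ as the ``have the same image under $Tp$'' equivalence relation on $TM$, which will simultaneously give regularity and realize $T(M/\Gamma)$ as the quotient.

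First I would establish that $Tp \colon TM \to T(M/\Gamma)$ is a surjective submersion. Submersivity is clear: $T$ preserves submersions, as is seen in charts where $p$ looks like a linear projection. Surjectivity follows by lifting: any tangent vector $w \in T_y(M/\Gamma)$ is represented by a smooth curve $\gamma$ at $y$, and since $p$ is a submersion we can lift $\gamma$ locally to a curve in $M$ whose velocity at $0$ is mapped to $w$ by $Tp$.

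The key computational step is to show that, under the identification $T(M \times M) = TM \times TM$,
\[
T\Gamma = \{(v,w) \in TM \times TM : Tp(v) = Tp(w)\}.
\]
Since $p \times p \colon M \times M \to (M/\Gamma) \times (M/\Gamma)$ is a submersion and $\Gamma = (p \times p)^{-1}(\Delta_{M/\Gamma})$, the standard fact that $T$ commutes with transverse preimages gives
\[
T\Gamma = (T(p \times p))^{-1}(T\Delta_{M/\Gamma}) = (Tp \times Tp)^{-1}(\Delta_{T(M/\Gamma)}),
\]
which is exactly the displayed equation.

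With this identification the remainder is formal. The relation $T\Gamma$ is the equivalence relation on $TM$ associated to the surjective submersion $Tp$, so by Godement's criterion it is regular: its graph is the embedded submanifold $T\Gamma$, and $T\pr_1, T\pr_2 \colon T\Gamma \to TM$ are submersions because $\pr_i \colon \Gamma \to M$ are (and $T$ preserves submersions). The factorization $Tp = \widetilde{Tp} \circ q$ through the quotient submersion $q \colon TM \twoheadrightarrow TM/T\Gamma$ thus exists and produces a smooth $\widetilde{Tp}$; the above identification of $T\Gamma$ gives injectivity, surjectivity of $Tp$ gives surjectivity, and since $Tp$ and $q$ are submersions so is $\widetilde{Tp}$. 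A bijective submersion between manifolds of the same dimension (the dimensions match: both sides have dimension $2 \dim(M/\Gamma)$) is a diffeomorphism.

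The only nontrivial ingredient is the preimage identity $T((p \times p)^{-1}(\Delta)) = (Tp \times Tp)^{-1}(\Delta_{T(M/\Gamma)})$; this is a standard consequence of $p \times p$ being a submersion (hence transverse to every submanifold), and once it is in hand everything else is bookkeeping via universal properties of submersive quotients.
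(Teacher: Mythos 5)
Your argument is correct, but it is organized around a different key step than the paper's. The paper proves that $T\Gamma$ is a regular equivalence relation by checking Godement's criterion directly on $T\Gamma$ viewed as the tangent of the embedded submanifold $\Gamma$: embeddedness and submersivity of $T\pr_1$ come from functoriality of $T$, and the only real work is transitivity, which the paper establishes by choosing local sections $s_1, s_2$ of $T\pr_1, T\pr_2$ through a representing curve and invoking transitivity of $\Gamma$ along that curve; bijectivity of $\widetilde{Tp}$ is then asserted by a similar argument. You instead identify $T\Gamma$ once and for all as $(Tp \times Tp)^{-1}(\Delta_{T(M/\Gamma)})$ via the transversality identity $T\big((p\times p)^{-1}(\Delta)\big) = (T(p\times p))^{-1}(T\Delta)$, after which transitivity, regularity, and injectivity of $\widetilde{Tp}$ are all formal consequences of ``same image under the submersion $Tp$'' being an equivalence relation. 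Your route buys a cleaner, more conceptual proof that dispenses with the curve-lifting argument and makes the bijectivity of $\widetilde{Tp}$ explicit rather than asserted; the paper's route is more elementary in that it uses only the local-section characterization of submersions and never needs the preimage-of-the-diagonal identity. One small caveat: the paper works with Banach manifolds, so your final dimension count is not available in general; but it is also unnecessary, since a bijective submersion is a local diffeomorphism and hence a diffeomorphism, which is exactly the fact the paper uses.
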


\begin{proof}
The tangent map of an embedding is an embedding, so $T\Gamma$ is an embedded submanifold of $TM \times TM$. The projection $T\pr_1 \colon T\Gamma \to TM$ is the tangent map of a submersion so is also a submersion. The relation $T\Gamma$ is easily seen to be reflexive and symmetric. As for transitivity, suppose $(x,u) \sim (y,v) \sim (z,w)$. Since $T\pr_1$ is a submersion, it has a local section $s_1$ at $(y,v)$ such that $Ts_1(y,v)=(z,w)$ and similarly there is a section $s_2$ of $T\pr_2$ at $(y,v)$ such that $Ts_2(y,v)=(x,u)$. If $y_t$ is a curve representing $(y,v)$, then $s_1(y_t) \sim y_t \sim s_2(y_t)$ for small $t$, so by transitivity of $\Gamma$, $s_1(y_t) \sim s_2(y_t)$, but these curves represent respectively $(z,w)$ and $(x,u)$, which are therefore equivalent via $T\Gamma$.
So by Godement's criterion, $T\Gamma$ is a regular equivalence relation on $TM$.

For the second claim, the map $Tp  \colon TM \to T(M/\Gamma)$ factors through $p_T \colon TM \to TM/T\Gamma$ (same argument as in the previous paragraph) and the map $\widetilde{Tp} \colon TM/T\Gamma \to T(M/\Gamma)$ is therefore a submersion, since $Tp$ is. It is also bijective, so it is a diffeomorphism.
\end{proof}

Let $h$ be an algebra on a manifold $M$ and $\Gamma$ be a regular equivalence relation on $M$. Since $Tp$ is surjective, there is at most one (set-theoretic) map $h^\Gamma \colon T(M/\Gamma) \to M/\Gamma$ making the diagram
\begin{equation*}
\xymatrix{
TM \ar[r]^{h} \ar@{>>}[d]_{Tp} & M \ar@{>>}[d]^p\\
T(M/\Gamma) \ar[r]_{h^\Gamma} & M/\Gamma
}
\end{equation*}
commute and by the above there is one if and only if
\begin{equation}
(h \times h) (T\Gamma) \subseteq \Gamma.
\end{equation}
It is denoted by $h^\Gamma$ and given by
\begin{equation}
h^\Gamma \circ Tp = p \circ h.
\end{equation}
In that case, it is smooth since $h^\Gamma \circ Tp$ is smooth and $Tp$ is a submersion (hence a $\cat{Set}$-final morphism). It is an algebra: for the first axiom,
\[
h^\Gamma \circ \zeta_{M/\Gamma} \circ p = h^\Gamma \circ Tp \circ \zeta_M = p \circ h \circ \zeta_M = p
\]
so $h^\Gamma \circ \zeta_{M/\Gamma} = \id_{M/\Gamma}$. For the second axiom,
\[
h^\Gamma \circ Th^\Gamma \circ T^2p = h^\Gamma \circ Tp \circ Th = p \circ h \circ Th = p \circ h \circ \mu_M = h^\Gamma \circ Tp \circ \mu_M = h^\Gamma \circ \mu_{M/\Gamma} \circ T^2p
\]
so, $T^2p$ being surjective, $h^\Gamma \circ Th^\Gamma = h^\Gamma \circ \mu_{M/\Gamma}$. This algebra is called the \define{quotient algebra} of $h$ by $\Gamma$.

A sufficient condition for existence of $h^\Gamma$ is that $\Gamma$ be saturated with respect to the foliation induced by $h$ (see below). Indeed, in this case $h(x,v) \sim x$ and $h(y,w) \sim y$, so $(x,v) \sim (y,w)$ implies $x \sim y$, which implies $h(x,v) \sim h(y,w)$. This shows that the quotient algebra on $M/\Gamma$ is then the tangent bundle projection.
If the equivalence relation is given by a group action, this condition is that $G$-orbits be unions of leaves of $h$.

\paragraph{Group action}
We are going to give a sufficient condition for existence of a quotient algebra when the equivalence relation is given by a group action. Let $G$ be a Lie group acting smoothly on a manifold $M$. It determines an equivalence relation with graph $\Gamma = \{ (x,g \cdot x) \mid x \in M \text{ and } g \in G \}$. Suppose that this equivalence relation is regular. (A useful sufficient condition is that $G$ act freely and properly.) Then, if $h$ is an algebra on $M$, there exists a quotient algebra as soon as $G$ acts by automorphisms of $h$ and the action is tangential, meaning that
\[
T_x(G\cdot x) \subseteq \calD_x
\]
for all $x \in M$.

Indeed, one has $T_{[x]}(M/G) \simeq T_xM/\frakg_x$ for all $x \in M$, where we wrote $\frakg_x = T_x(G \cdot x)$. Therefore elements of $T(M/G)$ can be written $[(x, v+\frakg_x)]$ where $v \in T_xM$, and $[(x, v+\frakg_x)] = [(y, w+\frakg_y)]$ if and only if there is a $g \in G$ such that $(y,w+\frakg_y) = g \cdot (x,v + \frakg_x)$. 

Since by hypothesis $\frakg_x \subseteq \calD_x$ and $\frakg_y \subseteq \calD_y$, the terms $\frakg_x$ and $\frakg_y$ in the equality are transparent when images by $h$ are considered, and since $G$ acts by automorphisms, $(y,w) = g \cdot (x,v)$ implies $h(y,w) = g \cdot h(x,v)$. Therefore we can define
\[
h^\Gamma([x, v+\frakg_x]) = [h(x,v)]
\]
for $(x,v) \in TM$, and this is the quotient algebra.

\begin{rmk}
Acting by automorphisms of $h$ does not imply being tangential ($\frakg_x \subseteq \calD_x$), as shows the example of $\mu_\RR$ with $G=\RR$ acting by translations on the base $\RR$. In the plane $T\RR \simeq \RR^2$, one has $\calD_{(x,y)} = \langle \partial_y \rangle$ while $\frakg_{(x,y)} = \langle \partial_x \rangle$.
\end{rmk}

\subsection{Lifted algebras on covering spaces}

Let $p \colon M^\sharp \to M$ be a covering map. In particular, it is \'{e}tale, so we will identify the tangent spaces $T_x M^\sharp$ and $T_{p(x)} M$ via $T_x p$.

\begin{prop}[Lifted algebra]
Let $h$ be an algebra on $M$ and $p \colon M^\sharp \to M$ be a covering.
Then there is a unique algebra $h^\sharp$ on $M^\sharp$ such that $p$ is a morphism from $h^\sharp$ to $h$, that is, such that the diagram
\begin{equation*}
\xymatrix{
TM^\sharp \ar[r]^{h^\sharp} \ar[d]_{Tp} & M^\sharp \ar[d]^p\\
TM \ar[r]_h & M
}
\end{equation*}
commutes. It is defined by $h^\sharp(x,v) = \tilde{\gamma}_{x,v}(1)$ where $\tilde{\gamma}_{x,v}$ is the lift of the path $\gamma_{p(x),v} \colon t \mapsto h(p(x),tv)$ starting at $x$, and it is called the \define{lifted algebra} of $h$ by $p$.
The deck transformations of $p$ are automorphisms of $h^\sharp$.
\end{prop}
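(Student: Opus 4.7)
The plan is to build $h^\sharp$ directly by path lifting, using that the zero section gives a canonical starting point for each path to be lifted, and then to verify the algebra axioms by exploiting uniqueness of lifts through the covering. Connectedness of $M^\sharp$ may be assumed by the remark on connected components.

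First I would set up the construction. For each $(x,v) \in TM^\sharp$, the smooth path $\gamma_{p(x),v} \colon t \mapsto h(p(x),tv)$ in $M$ starts at $h(p(x),0) = p(x)$, so it admits a unique continuous lift $\tilde{\gamma}_{x,v} \colon [0,1] \to M^\sharp$ with $\tilde{\gamma}_{x,v}(0) = x$. Set $h^\sharp(x,v) = \tilde{\gamma}_{x,v}(1)$; then $p \circ h^\sharp = h \circ Tp$ by construction (using $Tp(x,v) = (p(x),v)$ under the identification of tangent spaces given by the étale property). To check that $h^\sharp$ is smooth, fix $(x_0,v_0)$ and cover the compact image $\gamma_{p(x_0),v_0}([0,1])$ by finitely many evenly covered open sets; choosing a partition $0 = t_0 < t_1 < \cdots < t_n = 1$ so that each segment $\gamma_{p(x_0),v_0}([t_{i-1},t_i])$ lies in one such set, one can write $h^\sharp$ on a neighborhood of $(x_0,v_0)$ as an iterated composition of $h$, local smooth inverses of $p$, and evaluation at the $t_i$, yielding smoothness.

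Next I would verify the algebra axioms. The unit axiom is immediate: $\gamma_{p(x),0}$ is the constant path at $p(x)$, so its lift from $x$ is constant at $x$, giving $h^\sharp(x,0) = x$. For associativity, both $h^\sharp \circ Th^\sharp$ and $h^\sharp \circ \mu_{M^\sharp}$ are smooth maps $T^2M^\sharp \to M^\sharp$, and using that $p$ is a morphism and that $h$ is an algebra,
\begin{equation*}
p \circ h^\sharp \circ Th^\sharp = h \circ Tp \circ Th^\sharp = h \circ Th \circ T^2p = h \circ \mu_M \circ T^2p = h \circ Tp \circ \mu_{M^\sharp} = p \circ h^\sharp \circ \mu_{M^\sharp},
\end{equation*}
so both maps are continuous lifts through $p$ of the same map $T^2M^\sharp \to M$. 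Evaluating on the zero section $(x,0,0,0)$, both return $x$; since $T^2M^\sharp$ is connected (as $M^\sharp$ is), uniqueness of lifts through a covering forces them to agree everywhere.

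Uniqueness of $h^\sharp$ and invariance under deck transformations then both fall out of the same unique-lifting principle. If $k$ is any algebra with $p \circ k = h \circ Tp$, then $k$ and $h^\sharp$ are continuous lifts of the same map agreeing on the (connected) zero section, hence are equal on the connected space $TM^\sharp$. If $\phi$ is a deck transformation, then $\phi \circ h^\sharp$ and $h^\sharp \circ T\phi$ are both lifts of $h \circ Tp$ which send $(x,0)$ to $\phi(x)$, so they coincide and $\phi$ is an automorphism of $h^\sharp$. The main obstacle is upgrading the easy continuity of $h^\sharp$ to smoothness; this is handled by the compactness-plus-local-inverses step, and everything else is a mechanical consequence of path-lifting uniqueness.
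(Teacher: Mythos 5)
Your proposal is correct, and the construction of $h^\sharp$ by path lifting is the same as the paper's; the difference lies in how the verifications are carried out. For the second algebra axiom, the paper works pointwise: it computes the derivative ${h^\sharp}'(x,v)$, reduces the claim to an equality of endpoints of two lifted paths, and exhibits an explicit homotopy $\Phi(s,t)$ between $\gamma_{p(x),v+\dot{x}}$ and the concatenation of $\gamma_{p(x),v}$ with $\gamma_{h(p(x),v),w}$, then invokes the homotopy lifting property. You instead observe that $h^\sharp \circ Th^\sharp$ and $h^\sharp \circ \mu_{M^\sharp}$ are two lifts through $p$ of the same map $T^2M^\sharp \to M$ agreeing on the zero section, and conclude by the clopen-set uniqueness of lifts on a connected domain; you apply the same principle to uniqueness of $h^\sharp$ and to the deck transformations, where the paper again argues via uniqueness of lifts of individual paths. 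Your route is more uniform and avoids constructing any homotopy, at the cost of needing connectedness of $T^2M^\sharp$ (harmless, since every component of $T^2M^\sharp$ meets the double zero section, or by the paper's remark reducing to connected base manifolds), whereas the paper's ray-by-ray argument needs no connectedness hypothesis at all. You also supply an explicit smoothness argument for $h^\sharp$ via evenly covered sets and a subdivision of $[0,1]$, a point the paper leaves implicit. Both proofs are complete and correct.
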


\begin{proof}
For any $(x,v) \in TM^\sharp$ and $t \in \RR$, we need $p(h^\sharp(x,tv)) = h(p(x),tv)$. By the first axiom of algebras, we also need $h^\sharp(x,0)=x$. Uniqueness follows from the uniqueness of the lift of a path with given starting point.
Conversely, the map $h^\sharp$ thus defined makes the above diagram commute and satisfies the first axiom for algebras. For the second axiom, with the identification of tangent spaces,
${h^\sharp}'(x,v)(\dot{x},\dot{v}) = \partial_{(x,v)}\tilde{\gamma}_{x,v}(1) (\dot{x},\dot{v}) = \partial_{(x,v)}\gamma_{p(x),v}(1) (\dot{x},\dot{v}) = h'(p(x),v)(\dot{x},\dot{v})$, vector that we denote by $w$.
So we have to check that $\tilde{\gamma}_{\tilde{\gamma}_{x,v}(1),w} (1) = \tilde{\gamma}_{x,v+\dot{x}}(1)$. 
The path on the left hand side is the lift of the path $\gamma_{h(p(x),v),w}$ starting at $\tilde{\gamma}_{x,v}(1)$,
and the path on the right hand side is the lift of the path $\gamma_{p(x),v+\dot{x}}$ starting at $x$.
Since the lifts starting at a given point of homotopic paths have the same endpoints, it is enough to prove that the path $\gamma_{p(x),v+\dot{x}}$ is homotopic to the concatenation of the path $\gamma_{p(x),v}$ followed by the path $\gamma_{h(p(x),v),w}$.
A homotopy is given by $\Phi(s,t) = h(p(x), \min (\frac{2t}{1+s},1) v + \max(0,\frac{2t+s-1}{1+s}) \dot{x})$. Indeed, $\Phi(s,0)=p(x)$, $\Phi(s,1)=h(p(x),v+\dot{x})$, $\Phi(1,t) = h(p(x),t(v+\dot{x})) = \gamma_{p(x),v+\dot{x}}(t)$ and $\Phi(0,t) = h(p(x), \min (2t,1) v + \max(0,2t-1) \dot{x})$, which is $\gamma_{p(x),v}(2t)$ if $t \leqslant \frac12$ and $h(p(x), v + (2t-1) \dot{x}) = h(h(p(x),v),(2t-1)w) = \gamma_{h(p(x),v),w}(2t-1)$ if $t \geqslant \frac12$.

Finally, if $\psi$ is a deck transformation, then $h^\sharp(\psi(x),T_x\psi(v)) = \tilde{\gamma}_{\psi(x),v}(1)$, and $\psi(h^\sharp(x,v)) = \psi(\tilde{\gamma}_{x,v}(1))$. Both $\tilde{\gamma}_{\psi(x),v}$, and $\tilde{\gamma}_{x,v}$ are lifts of $\gamma_{p(x),v}$, and the second starts at the image by $\psi$ of the origin of the first, so by uniqueness of lifts, it also ends at the image by $\psi$ of the endpoint of the first, which was what to prove.
\end{proof}

The action of the group $\Gamma$ of deck transformations on $M^\sharp$ is free and properly discontinuous, hence tangential. If the covering is normal, then it is also transitive on the fibers and $M^\sharp/\Gamma \simeq M$. The uniqueness result for quotient algebras then implies $(h^\sharp)^\Gamma = h$.

Conversely, if a group $\Gamma$ acts freely and properly discontinuously on a manifold $M$, then $M$ is a (regular) covering space of $M/\Gamma$. If $h$ is an algebra on $M$ and $\Gamma$ acts by automorphisms of $h$, then the uniqueness result for lifted algebras implies $(h^\Gamma)^\sharp = h$.

\section{The tangent functor monad in algebraic geometry}

In this section, we examine how the previous constructions can be carried out in algebraic geometry. For the sake of simplicity, we will restrict our attention to affine schemes, and we will work on the coordinate rings (or algebras) of these affine schemes, which form a category equivalent to the opposite category of affine schemes.

\subsection{The tangent functor}

All rings and algebras in this section are assumed commutative and unital; modules, as well as algebra and ring morphisms and subobjects, are assumed unital. Let $R$ denote a fixed ring.

If $A$ is an $R$-algebra, then the $A$-algebra of K\"{a}hler $R$-differentials of $A$ is denoted by $\Omega_R(A)$. The universal differential is written $d_A \colon A \to \Omega_R(A)$, and the universal property of K\"{a}hler differentials says that
\begin{equation}
\varphi_A \colon \Der_R(A,-) \Longrightarrow \Hom_A(\Omega_R(A),-)
\end{equation}
is a natural isomorphism between these two endofunctors of $\cat{Mod}_A$.

If $M$ is an $R$-module, then the symmetric algebra of $M$ is denoted by $S_R(M)$. Finally, if $A$ is an $R$-algebra, we denote by $U_{A,R} \colon \cat{Mod}_A \to \cat{Mod}_R$ the ``restriction of scalars'' functor.

If we make an analogy with differential geometry, the $R$-algebra $A$ corresponds to the $\RR$-algebra $C^\infty(P)$ of functions on a manifold $P$, and the K\"{a}hler differentials $\Omega_R(A)$ correspond to the De Rham differentials $\Omega^1(P)$. This is not an exact correspondence: the $C^\infty(P)$-linear surjection $\Omega_\RR(C^\infty(P)) \twoheadrightarrow \Omega^1(P)$ is not injective as soon as $\dim P > 0$.
Actually, $\Omega^1(P) = \frakX(P)^*$ as $C^\infty(P)$-modules and $\frakX(P) = \Der_\RR(C^\infty(P))$, but by the universal property of K\"{a}hler differentials, $\Der_R(A) = \Omega_R(A)^*$ as $A$-modules. Therefore, $\Omega^1(P) \simeq \Omega_\RR(C^\infty(P))^{**}$ as $C^\infty(P)$-modules.

Now, $\Omega^1(P)$ is the module of functions on $TP$ which are fiberwise linear. On the algebraic side, one studies polynomial functions, so the symmetric algebra $S_A(\Omega_R(A))$ is a good candidate for the analog of $C^\infty(TP)$. This is in accordance with the general construction of the vector bundle associated to a sheaf of modules: if $X$ is a scheme and $M$ is a locally free sheaf of $\calO_X$-modules, then the relative spectrum of $S_{\calO_X}(M)$ is a (locally trivial) vector bundle on $X$ with space of sections the dual of $M$ (see~\cite[ch.II, ex.5.18]{hart} or~\cite[Prop.5 du n\textordmasculine41]{fac}, or~\cite[ch.II, 1.7]{ega}, where vector bundles are defined by applying this construction to a quasi-coherent sheaf of $\calO_X$-modules, hence are not assumed locally trivial).

Then, in the same way that we ``forgot'' that $TP$ is a vector bundle, we have to forget that $S_A(\Omega_R(A))$ is an $A$-algebra. Therefore, we define the \define{tangent functor} $T \colon \cat{Alg}_R \to \cat{Alg}_R$ by
\begin{equation}
T = U_{\bullet,R} \circ S_\bullet \circ \Omega_R
\end{equation}
on objects, that is, $TA = U_{A,R} ( S_A ( \Omega_R(A)) )$. On morphisms, if $f \colon A \to B$ is an $R$-algebra morphism, then $\Omega_R(f)$ can be regarded as $A$-linear when $\Omega_R(B)$ is seen as an $A$-module via $f$, so $S_A(\Omega_R(f))$ can be regarded as an $A$-algebra morphism, and in particular an $R$-algebra morphism. The algebra $TA$ is called the \define{tangent algebra} of $A$.

As an example, if $A = R[X_1 , \ldots, X_n]$, then $\Omega_R(A) = A \langle dX_1, \ldots, dX_n \rangle$ is the free $A$-module generated by the $dX_i$'s, and $TA = R[X_1 , \ldots, X_n, dX_1, \ldots, dX_n]$, as expected.

\subsection{The tangent functor comonad}

Since we work in the opposite category of affine schemes, we will construct a comonad instead of a monad. The counit is given by
\[
\zeta_A \colon TA \isomto  S_A(\Omega_R(A)) \twoheadrightarrow S_A(\Omega_R(A))_0 \isomto A
\]
given by the projection to the degree-0 term, which is analogous to $\zeta_M \colon M \to TM$.

We also have the analog of the tangent bundle projection,
\[
\tau_A \colon A \isomto S_A(\Omega_R(A))_0 \subseteq S_A(\Omega_R(A)) \isomto TA
\]
given by inclusion. In particular, we have $T\tau_A, \tau_{TA} \colon TA \to T^2A$.

Now we have to look for the analog of fiberwise addition.
On the differential geometry side, if $f, g \colon N \to E$ equalize the vector bundle projection $p \colon E \to M$, then we can define $f+g \colon N \to E$ by fiberwise addition. This is represented by
\[
f + g \colon N \xrightarrow{f \ast g} E \times_p E \xrightarrow{+} E
\]
where $f \ast g$ is the unique function given by the universal property of the pullback.

Dually, the pushout in the category of $A$-algebras is given by the tensor product over the algebra at the ``vertex'' of the pushout square:
\[
\xymatrix{
A \ar[r]^\beta \ar[d]_\gamma & B \ar[d]^{i_B} \\
C \ar[r]_{i_C} & B \otimes_A C
}
\]
We also have to define a coaddition $\boxplus_E \colon S_A(E) \to S_A(E) \otimes_A S_A(E)$ for $E$ an $A$-module. In differential geometry, the pullback by addition on $\Omega^1(M) \simeq C^\infty(TM)_{\mathrm{FibLin}}$ is given by $\omega \mapsto \omega \otimes 1 + 1 \otimes \omega \in \Omega^1(M) \otimes_{C^\infty(M)} \Omega^1(M) \subseteq C^\infty(TM \otimes_M TM)$. Therefore, for any $A$-module $E$, we define
\begin{align*}
\Delta_E \colon E & \to E \otimes_A E\\
x & \mapsto x \otimes 1 + 1 \otimes x.
\end{align*}
We define $j \colon S_A(E \otimes_A E) \hookrightarrow S_A(E) \otimes_A S_A(E)$ to be the inclusion induced by the inclusion $i \otimes_A i \colon E \otimes_A E \hookrightarrow S_A(E) \otimes_A S_A(E)$. Then we define the $A$-algebra morphism
\[
\boxplus_E = j \circ S_A(\Delta_E) \colon S_A(E) \to S_A(E) \otimes_A S_A(E).
\]
In particular, note that $\boxplus_E \neq \Delta_{S_A(E)}$.
If $e_i \in E$ for $i \in I$ a finite set, then
\[
\boxplus_E \left( \prod_{i \in I} e_i \right) =
\sum_{J \subseteq I} \left( \prod_{j \in J} e_j \right) \otimes \left( \prod_{k \in I \setminus J} e_k \right).
\]

If $E$ is an $A$-module and if $f, g \colon S_A(E) \to A'$ are $R$-algebra morphisms which coequalize $i \colon A \to S_A(E)$, we define
\[
f \boxplus_E g \colon S_A(E) \xrightarrow{\boxplus_E} S_A(E) \otimes_A S_A(E) \xrightarrow{f \otimes_A g} A'.
\]
Now we can define $\mu \colon T \Rightarrow T^2$ by
\[
\mu_A = \tau_{TA} \boxplus_{\Omega_R(A)} T\tau_A
\]
for any $R$-algebra $A$.

\begin{prop}
The triple $(T, \zeta, \mu)$ is a comonad in $\mathbf{Alg}_R$.
\end{prop}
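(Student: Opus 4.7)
The plan is to verify directly the two axioms of a comonad, namely the counit laws $\zeta_{TA} \circ \mu_A = T\zeta_A \circ \mu_A = \id_{TA}$ and coassociativity $\mu_{TA} \circ \mu_A = T\mu_A \circ \mu_A$. All the natural transformations involved are $R$-algebra morphisms, so it suffices to check these identities on a set of $R$-algebra generators of $TA$. Since $\Omega_R(A)$ is generated as an $A$-module by $\{d_A a \mid a \in A\}$ and $TA = S_A(\Omega_R(A))$, a convenient generating set is $A$ together with $\{d_A a \mid a \in A\}$.

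First I would write down explicit formulas for the structure maps on these generators. The counit $\zeta_A$ vanishes on elements of positive symmetric degree and restricts to the identity on $A$, while $\tau_A$ is the inclusion of $A$ as the degree-zero part of $TA$. Unfolding the definition of $\boxplus_{\Omega_R(A)}$ yields $\mu_A(a) = a$ (viewed in $T^2A$ via the degree-zero inclusion) and
\[
\mu_A(d_A a) = \tau_{TA}(d_A a) + T\tau_A(d_A a) = d_A a + d_{TA}(a),
\]
where the first summand lies in the degree-zero part $TA \subseteq T^2A$ and the second in $\Omega_R(TA) \subseteq T^2A$.

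With these formulas in hand, the counit laws become immediate. Applying $\zeta_{TA}$ (projection to degree zero) to $d_A a + d_{TA}(a)$ recovers the first summand $d_A a$; applying $T\zeta_A$ instead kills the first summand (since $\zeta_A$ annihilates $d_A a$) and sends $d_{TA}(a)$ to $d_A(\zeta_A(a)) = d_A a$. The check on elements of $A$ is trivial. For coassociativity on $d_A a$, I would verify that both $\mu_{TA} \circ \mu_A$ and $T\mu_A \circ \mu_A$ produce the same three-term element $d_A a + d_{TA}(a) + d_{T^2A}(a)$ in $T^3A$: the map $\mu_{TA}$ fixes $d_A a \in TA \subseteq T^2A$ as a degree-zero element and sends $d_{TA}(a) \in \Omega_R(TA)$ to $d_{TA}(a) + d_{T^2A}(a)$ by the same formula applied with base ring $TA$; while $T\mu_A$ acts on the degree-zero element $d_A a$ by $\mu_A$ itself, yielding $d_A a + d_{TA}(a)$, and sends $d_{TA}(a)$ to $d_{T^2A}(\mu_A(a)) = d_{T^2A}(a)$, using $\mu_A(a) = a$.

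The main obstacle I anticipate is purely notational: elements of $T^2A$ and $T^3A$ admit several distinct incarnations, such as $d_A a$, $d_{TA}(a)$, and $d_{T^2A}(a)$, that live at different symmetric degrees in the nested symmetric algebras, and keeping these apart is essential in order to match up both sides of coassociativity. Once this bookkeeping is in place, the proof reduces to a mechanical verification mirroring the smooth case established in Theorem~\ref{thm:tfm}.
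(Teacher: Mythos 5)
Your proposal is correct and follows essentially the same route as the paper's own proof: reduce to generators (the paper uses the $d_A a$ as $A$-algebra generators, you add $A$ itself to work with $R$-algebra generators, which amounts to the same verification), compute $\mu_A(d_A a) = d_A a + d_{TA}a$, and check the counit laws and coassociativity term by term, arriving at the same three-term element $d_A a + d_{TA}a + d_{T^2A}a$. No changes needed.
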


\begin{proof}
Let $A$ be an $R$-algebra. It is enough to check the axioms on elements of the form $d_A a \in TA$, since they generate the $A$-algebra $S_A(\Omega_R(A))$ and the maps involved are $A$-algebra morphisms (before restriction of scalars). Also, note that
\[
\mu_A(da) = da + d_T a
\]
where $d$ is short for $d_A$ and $d_T$ is short for $d_{TA}$.

For the counit laws: from $\zeta_{TA}(da) = da$ and $\zeta_{TA}(d_Ta) = 0$, we obtain $\zeta_{TA} \circ \mu_A = \id_{TA}$. Similarly, from $T\zeta_{A}(da) = 0$ and $T\zeta_{A}(d_Ta) = da$, we obtain $T\zeta_{A} \circ \mu_A = \id_{TA}$. 

For coassociativity: $\mu_{TA}(da)=da$ and $\mu_{TA}(d_Ta) = d_Ta + d_{T^2}a$, while $T\mu_{A}(da)=da+d_Ta$ and $T\mu_{A}(d_Ta) = d_{T^2}a$. Therefore, $\mu_{TA} \circ \mu_A (da) = T\mu_{A} \circ \mu_A (da) = da + d_Ta + d_{T^2}a$.
\end{proof}

The coalgebras on an $R$-algebra $A$ for this comonad are $R$-algebra morphisms $h \colon A \to TA$ such that $\zeta_A \circ h = \id_{TA}$ and $Th \circ h = \mu_A \circ h$. In particular, the counit axiom says that $h(a) = a + \text{higher degree terms}$. The study of these coalgebras will be the object of future research.

\chapter{The affine case}
\label{chap:aff}

In this chapter, we work in the category of affine manifolds and affine maps. In this category, we can give a characterization of algebras, and in the parallelizable case, of their morphisms.

We also show that there are other monads and comonads in this category, due to the scarcity of morphisms. We classify them, and also the bimonads and Hopf monads that they form. We briefly look at their Hopf modules.

\section{Affine algebras}
\label{sec:affine}

\subsection{Affine algebras on open subsets of affine spaces}

We begin by classifying affine algebras and morphisms on open subsets of affine spaces.

\begin{prop}
\label{prop:affine}
Let $U$ be an open subset of an affine space with direction space $E$ and let $h \colon TU \to U$ be an affine map. Then $h$ is an algebra if and only if there is an endomorphism $A \in \calL(E)$ with $A^2=0$ such that
\begin{equation}
h(x,v) = x + Av
\end{equation}
for all $(x,v) \in TU$.

If $h$ and $k$ are affine algebras respectively on $U$ and $V$ and given by $A \in \calL(E)$ and $B \in \calL(F)$, then an affine map $f \colon U \to V$ is a morphism from $h$ to $k$ if and only if its linear part $\vec{f} \in \calL(E,F)$ intertwines $A$ and $B$, that is,
\begin{equation}
\vec{f} \circ A = B \circ \vec{f}.
\end{equation}
\end{prop}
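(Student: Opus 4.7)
The plan is to expand $h$ using its affine structure and impose each axiom in turn. Since $TU = U \times E$ is open in the affine space with direction $E \times E$, any affine $h \colon TU \to U$ has the form
\[
h(x,v) = c + P(x - x_0) + Qv
\]
for some choice of $x_0 \in U$, some $c$ in the ambient affine space, and some $P, Q \in \calL(E)$. The unit axiom $h(x,0) = x$ holding on the open set $U$ immediately forces $c = x_0$ and $P = \id_E$, reducing $h$ to $h(x,v) = x + Av$ after setting $A := Q$.

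For the second axiom, I would use that affinity makes the tangent map constant in the base, giving $h'(x,v)(\dot x, \dot v) = \dot x + A\dot v$. Substituting into $h\bigl(h(x,v), h'(x,v)(\dot x, \dot v)\bigr) = h(x, v + \dot x)$ yields
\[
x + Av + A\dot x + A^2 \dot v = x + Av + A\dot x,
\]
which for all $\dot v \in E$ is equivalent to $A^2 = 0$. Conversely, if $A^2 = 0$ then both axioms hold by the same computation, finishing the characterization. This also recovers, in the affine setting, the general nilpotency relation obtained earlier and shows that every nilpotent-of-order-two endomorphism does arise as the endomorphism associated to some algebra.

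For the morphism statement, an affine $f \colon U \to V$ satisfies $Tf(x,v) = (f(x), \vec f(v))$, so the diagram condition $f \circ h = k \circ Tf$ unwinds to
\[
f(x) + \vec f(Av) = f(x) + B\vec f(v)
\]
for all $(x,v) \in TU$, which is equivalent to $\vec f \circ A = B \circ \vec f$.

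Main obstacle: there is essentially none — once $h$ is written in canonical affine form, both axioms collapse into elementary linear-algebraic relations on its linear part. The only points of care are distinguishing an affine map from its linear part and observing that the tangent map of an affine map is independent of the base point, which is what makes the second axiom linear rather than genuinely quadratic in $h$.
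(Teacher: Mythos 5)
Your proposal is correct and follows essentially the same route as the paper's own proof: the paper likewise reduces $h$ to $x + Av$ via the unit axiom, derives $A^2=0$ by the direct substitution $x+Av+A(\dot{x}+A\dot{v}) = x+A(v+\dot{x})$, and obtains the intertwining condition from $f(x+Av)=f(x)+B\vec{f}v$ (it also notes these facts follow alternatively from the general results of Chapter~\ref{chap:tfmonad}, but the computation is the same as yours). No gaps.
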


\begin{rmk}
If the affine map $h$ were defined only on a neighborhood of the zero section and satisfied the two algebra axioms there, the result would still hold, and similarly for an ``algebra morphism'' in that case: this is because all the results needed were derived by differentiating the algebra axioms along the zero section, or setting some variables to be on the zero section.
\end{rmk}

\begin{proof}
Let $h$ be an affine algebra on $U$. That $h$ is affine means that there is a linear map $\vec{h} \in \calL(E \times E, E)$ such that $h(x,v) = h(x,0) + \vec{h}(0,v)$ for all $(x,v) \in TU$.
Define $A = \vec{h} \circ \pr_2 \in \calL(E)$. Since $h(x,0)=x$ by the first axiom for algebras, we have $h(x,v) = x + Av$.
Finally, note that $A = {h_x}'(0)$, so $A^2=0$ by the general result of Chapter~\ref{chap:tfmonad}.
One can also prove that $A^2=0$ directly: the second axiom for algebras reads in this case $x+Av+A(\dot{x}+A\dot{v}) = x+A(v+\dot{x})$, and simplifying, we get $A^2=0$.
The converse is obvious.

Let $f$ be an affine morphism as in the second statement. Its linear part is equal to any of its tangent maps and the result follows from Proposition~\ref{prop:intertw} (the intertwining property of algebra morphisms).
One can also prove this directly: the required diagram commutativity reads $f(x+Av) = f(x)+ B(\vec{f}v)$, so $\vec{f} \circ A = B \circ \vec{f}$.
Again, the converse is obvious.
\end{proof}

\begin{rmk}
Note that if $h \colon TU \to U$ is an affine algebra given by $A \in \calL(E)$, then $U$ is saturated in the direction $\im A$, \textit{i.e.} $U = U+\im A$.
\end{rmk}

\subsection{Algebras in the affine category}

An \define{affine manifold} is a manifold with an atlas whose coordinate changes are affine, and an affine map is a map whose expression in any pair of charts is affine.
Affine manifolds and affine maps form a subcategory of the smooth category which is stable under the tangent functor.
An affine manifold has a canonical flat connection.
If $M$ is an affine manifold and $(x,v) \in TM$, we write $x+v = \exp_x(v)$, when defined, where $\exp$ denotes the exponential map of the flat connection.

Let $M$ be an affine manifold with trivial holonomy modeled on a vector space $E$, let $x_0 \in M$ and $\Phi_0 \colon T_{x_0}M \to E$ be an isomorphism. Then there is a unique vector bundle isomorphism $\Phi \colon TM \to M \times E$ which extends $\Phi_0$ and such that for any chart $(U,\phi)$ of $M$, the linear automorphism $\Phi_x \circ {\phi'(x)}^{-1} \in \calL(E)$ is independent of $x \in U$.
This shows in particular that an affine manifold with trivial holonomy is parallelizable.

From now on, such a choice of a trivialization, or \define{absolute parallelism}, will be implied for all affine manifolds with trivial holonomy.

If $M$ is an affine manifold with trivial holonomy modeled on the vector space $E$, if $A \in \calL(E)$ and $x \in M$, then we still denote by $A$ the endomorphism $ {\Phi_x} \circ A \circ  {\Phi_x}^{-1} \in \calL(T_xM)$.

We will need the following theorem of Auslander and Markus~\cite{ausmark}, which will allow us to reduce the study of affine manifolds to the study of affine manifolds with trivial holonomy.

\begin{thm}[Auslander--Markus~\cite{ausmark}]
Any finite-dimensional real affine manifold has a holonomy covering space, with trivial holonomy, whose group of deck transformations is the holonomy group of that manifold.
\end{thm}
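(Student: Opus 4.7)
The plan is to construct the holonomy covering via the standard developing map / holonomy representation of a flat affine structure. Let $M$ be a connected affine $n$-manifold; denote by $\pi \colon \tilde M \to M$ its universal covering and by $\pi_1(M)$ its deck group. The affine atlas on $M$ pulls back to an affine atlas on $\tilde M$ whose transition maps, on each connected overlap of pulled-back chart domains, are restrictions of a single affine map of $\RR^n$ (because overlapping charts on $M$ differ by affine transition functions, and simple connectedness rules out monodromy of the local affine identifications). A standard analytic-continuation-type argument then yields a globally defined developing map $D \colon \tilde M \to \RR^n$ whose restrictions to chart domains coincide with affine charts, together with a holonomy representation $\rho \colon \pi_1(M) \to \mathrm{Aff}(\RR^n)$ making $D$ equivariant, i.e., $D(\gamma \cdot \tilde x) = \rho(\gamma)\, D(\tilde x)$ for all $\gamma \in \pi_1(M)$ and $\tilde x \in \tilde M$. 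The linear part $L \circ \rho \colon \pi_1(M) \to GL(\RR^n)$ is, by construction, the linear holonomy of the canonical flat connection on $M$; call its image $\mathrm{Hol}(M)$ and its kernel $K$.

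Next, I would set $M^\sharp := \tilde M / K$. Since $K$ is normal in $\pi_1(M)$ (as the kernel of a homomorphism), this is a connected normal covering space of $M$ with deck group $\pi_1(M)/K \cong \mathrm{Hol}(M)$. Because elements of $K$ act on $\tilde M$ by affine diffeomorphisms (they preserve $D$ up to the identity in $GL(\RR^n)$ and thus preserve the pulled-back affine charts up to affine maps), the affine structure on $\tilde M$ descends to $M^\sharp$, so the covering $M^\sharp \to M$ is an affine local diffeomorphism. Furthermore $\pi_1(M^\sharp) \cong K$, and the linear holonomy of $M^\sharp$ is the composition $K \hookrightarrow \pi_1(M) \xrightarrow{L \circ \rho} GL(\RR^n)$, which vanishes by definition of $K$. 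Hence $M^\sharp$ has trivial linear holonomy, and its deck group over $M$ is $\mathrm{Hol}(M)$, as required.

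The main technical step, and the only one that really deserves care, is the construction of the developing map $D$ and the holonomy representation $\rho$. One proceeds by fixing a basepoint $\tilde x_0$ and an affine chart around it; for any other point $\tilde y \in \tilde M$, one covers a path from $\tilde x_0$ to $\tilde y$ by finitely many pulled-back affine charts and glues successive charts by the unique affine map of $\RR^n$ that matches them on overlaps, producing a value $D(\tilde y) \in \RR^n$. Simple connectedness of $\tilde M$, together with the fact that any two paths differ by a homotopy decomposable into elementary moves across chart triple overlaps, ensures independence of the chosen path; this is the usual monodromy argument for $(G,X)$-structures. The homomorphism $\rho$ is then forced by the equation $D \circ \gamma = \rho(\gamma) \circ D$ and is a group homomorphism because the affine group is closed under composition. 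Once $D$ and $\rho$ are in hand, the descent of the affine structure to $M^\sharp$ and the identification of the deck group with $\mathrm{Hol}(M)$ are formal. Note that the finite-dimensional hypothesis enters via the existence of enough affine charts of standard form modeled on $\RR^n$, and via the definition of $GL(\RR^n)$ used for linear holonomy.
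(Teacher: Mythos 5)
The paper does not prove this statement at all: it is quoted as an external result of Auslander and Markus, with a citation and no argument, and is then used as a black box to lift algebras to the holonomy cover. So there is no in-paper proof to compare against; judged on its own, your proof is correct and is essentially the classical argument. Two small remarks. First, the detour through the developing map is optional: the holonomy in question is the linear holonomy of the canonical flat connection, whose parallel transport gives a homomorphism $\pi_1(M)\to GL(n,\RR)$ directly (flatness makes it factor through homotopy classes of loops), and one can define $M^\sharp$ as the cover corresponding to its kernel $K$ without ever constructing $D$; what the developing map buys you is the finer affine holonomy $\rho$ with values in $\mathrm{Aff}(\RR^n)$, which is more than the statement needs. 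Second, the descent of the affine structure to $M^\sharp=\tilde M/K$ is automatic for an even simpler reason than the one you give: any covering space of an affine manifold inherits an affine atlas by pulling back charts along the (étale) covering map, so you need not verify that $K$ acts affinely on $\tilde M$. With those simplifications noted, your identification of the deck group as $\pi_1(M)/K\cong\mathrm{Hol}(M)$ and of the holonomy of $M^\sharp$ as the restriction of $L\circ\rho$ to $\pi_1(M^\sharp)\cong K$, hence trivial, is exactly right, and matches the sense of ``holonomy'' the paper uses (linear holonomy of the flat connection, trivial holonomy yielding an absolute parallelism).
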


Let $M$ be an affine manifold modeled on the vector space $E$, and $p \colon M^\sharp \to M$ be its holonomy covering space. As above, identify $T_aM^\sharp$ with $T_{p(a)}M$ via $T_ap$. If $A \in \calL(E)$ and $a \in M^\sharp$, then ${\Phi_a} \circ A \circ  {\Phi_a}^{-1} \in \calL(T_{p(a)}M)$. If $(x,v) \in TM$, then if we write ``$Av$'', this implies that all the morphisms ${\Phi_a} \circ A \circ  {\Phi_a}^{-1} \in \calL(T_xM)$ are equal if $p(a)=x$, and we use this morphism to evaluate $Av$.

As for morphisms, an affine morphism $f \colon M \to N$ between manifolds with trivial holonomy has a well-defined linear part, namely $\Psi_{f(x)} \circ T_xf \circ {\Phi_x}^{-1} \in \calL(E,F)$ for any $x \in M$, where $\Phi$ and $\Psi$ are the absolute parallelisms of $M$ and $N$ respectively. We will talk of the linear part of an affine morphism only if it is well-defined.

We can now state the main theorem of this chapter.

\begin{thm}[Affine algebras and morphisms]
\label{thm:affine}

Let $M$ be an affine manifold modeled on the vector space $E$, either with trivial holonomy or real and finite-dimensional, and let $h \colon TM \to M$ be an affine map. Then $h$ is an algebra if and only if there is an endomorphism $A \in \calL(E)$ with $A^2=0$ such that
\begin{equation}
h(x,v) = x + Av
\end{equation}
for all $(x,v) \in TM$.

If $h$ and $k$ are affine algebras respectively on $M$ and $N$ and given by $A \in \calL(E)$ and $B \in \calL(F)$, and if $M$ and $N$ have trivial holonomy, then an affine map $f \colon M \to N$ is a morphism from $h$ to $k$ if and only if its linear part $\vec{f} \in \calL(E,F)$ intertwines $A$ and $B$.
\end{thm}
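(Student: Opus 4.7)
The plan is to reduce both assertions to the local statements of Proposition~\ref{prop:affine} and glue or descend the pointwise data to global data. The main content is the trivial holonomy case; the real finite-dimensional case then follows by lifting to the holonomy covering via the Auslander--Markus theorem.

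For trivial holonomy, on each affine chart $(U,\phi)$ the restriction $h|_{TU}$ read in the chart satisfies the hypotheses of Proposition~\ref{prop:affine}, hence has the form $\phi(h(x,v)) = \phi(x) + A_\phi\, \phi'(x)v$ with $A_\phi^2 = 0$. Writing $L_\phi \in \calL(E)$ for the constant value of $\Phi_x \circ \phi'(x)^{-1}$ on $U$, which exists by definition of the absolute parallelism, I would set $A := L_\phi A_\phi L_\phi^{-1}$ and verify its chart-independence. On an overlap $U \cap U'$ the transition $\phi' \circ \phi^{-1}$ is affine with linear part $L_{\phi'}^{-1} L_\phi$, and the morphism part of Proposition~\ref{prop:affine} applied to this transition, viewed as a morphism between the two chart algebras, forces exactly the conjugation relation $L_\phi A_\phi L_\phi^{-1} = L_{\phi'} A_{\phi'} L_{\phi'}^{-1}$. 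The formula $h(x,v) = x + Av$ then holds globally, with $+$ interpreted via the exponential of the flat connection and $Av := \Phi_x^{-1} A \Phi_x v$.

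For the real finite-dimensional case, let $p \colon M^\sharp \to M$ be the holonomy covering provided by Auslander--Markus, which has trivial holonomy and the same model $E$. Since $p$ is \'etale and affine and $h$ is affine, the lifted algebra $h^\sharp$ from the preceding section is also affine, so the first case gives $h^\sharp(a,w) = a + Aw$ for a unique nilpotent $A \in \calL(E)$. The deck transformations act by affine automorphisms of $h^\sharp$, by the last assertion of the lifted-algebra proposition, and thus commute with $A$ through the parallelism, so the formula descends to $h(x,v) = x + Av$ on $M$. The converses in both cases are direct checks from $A^2 = 0$ and the local affine model. For the morphism part, when both manifolds have trivial holonomy the linear part $\vec{f}$ is well defined, and Proposition~\ref{prop:intertw} applied to $f$ gives $Tf \circ A = B \circ Tf$, which read through the parallelisms on each tangent fibre becomes exactly $\vec{f} \circ A = B \circ \vec{f}$. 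Conversely, affineness of $f$ and this intertwining yield immediately $f(h(x,v)) = f(x) + \vec{f}(Av) = f(x) + B\vec{f}(v) = k(Tf(x,v))$.

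The delicate step throughout is the gluing and descent of the pointwise endomorphism $A_\phi$ into a single globally defined element of $\calL(E)$: once the conjugation compatibility on chart overlaps in the trivial holonomy case, and equivariance under the deck action in the covering case, are pinned down, the rest of the proof is essentially bookkeeping. I expect no surprises in the morphism statement, as it reduces quickly to Proposition~\ref{prop:intertw} and a one-line affine computation.
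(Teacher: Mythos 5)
Your proposal is correct and follows essentially the same route as the paper: reduce to Proposition~\ref{prop:affine} chart by chart in the trivial holonomy case (you spell out the overlap compatibility via conjugation by the transition maps' linear parts, where the paper simply asserts the chart expressions agree), then handle the real finite-dimensional case by lifting to the Auslander--Markus holonomy cover and descending, and derive the morphism statement from Proposition~\ref{prop:intertw}. The only detail worth flagging is that applying Proposition~\ref{prop:affine} on a chart domain uses the remark following it (the algebra axioms need only hold near the zero section there), which your write-up leaves implicit.
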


\begin{proof}
First we prove the result for manifolds with trivial holonomy.
Let $h$ be an affine algebra on $M$ with $M$ having trivial holonomy. Then on each chart $(U,\phi)$ of $M$, $h$ has the expression given by Proposition~\ref{prop:affine} (because of the remark following that proposition), with an endomorphism $A_U \in \calL(E)$. Since these expressions agree on intersections of charts, the $A_U$'s are actually independent of $U$.

As for morphisms, an affine morphism $f$ between manifolds with trivial holonomy has a well-defined linear part, and it satisfies the intertwining property for the same reason as in Proposition~\ref{prop:affine}.
The converse is obvious.

Now, if $M$ is an affine manifold and $h$ is an algebra on $M$, then there is a lifted algebra on the holonomy covering space $h^\sharp$ which is also affine, hence of the given form. As noted in the section on lifted algebras, we have $h = (h^\sharp)^\Gamma$ where $\Gamma$ is the group of deck transformations of $M^\sharp$. That is, if $(x,v) \in TM$ , then $h(x,v) = p(h^\sharp(a,v))$ for any $a \in M^\sharp$ projecting to $x$. Since $p$ is affine, this reads $h(x,v) = x + A_a v$ for any $a$ projecting to $x$, so by the conventions we made just before the theorem, we can write $h(x,v) = x + Av$.
The converse is obvious.
\end{proof}

An affine algebra of maximal rank on an open subset of an affine space is isomorphic to a free algebra. Indeed, let $h \colon TU \to U$ be of maximal rank, with $D = \im A = \ker A$ and let $F$ be a topological complement of $D$, so that $E = D \oplus F$.
Let $x_0 \in U$, and define $N = (x_0 + F) \cap U$, so that $U \simeq N \times D$. We write $\tilde{A} \colon TN \isomto N \times F \isomto N \times E/D \isomto N \times D \isomto U$. Then $h$ is isomorphic to $\mu_N$ via $\tilde{A}$, meaning that the following diagram commutes:
\[
\xymatrix{
T^2N \ar[r]^{T\tilde{A}} \ar[d]_{\mu_N} & TU \ar[d]^h\\
TN \ar[r]_{\tilde{A}} & U
}
\]
Indeed, if $(x,v,\dot{x},\dot{v}) \in T^2N$, then
\[
(x,v,\dot{x},\dot{v}) \xrightarrow{T\tilde{A}} (x+Av,\dot{x}+A\dot{v}) \xrightarrow{h} x+Av+A\dot{x}
\quad\text{and}\quad
(x,v,\dot{x},\dot{v}) \xrightarrow{\mu_N} (x,v+\dot{x}) \xrightarrow{\tilde{A}} x+A(v+\dot{x}).
\]

This does not hold for general affine algebras on parallelizable manifolds. Indeed, the leaves of a free algebra are the tangent spaces, and therefore are noncompact (in positive dimension). The affine algebra $h \colon TM \to M$ where $M =\RR \times \mathbb{S}^1$ is the cylinder, given by $A = \begin{pmatrix} 0&0\\1&0 \end{pmatrix}$ has maximal rank, but its leaves are the circles $\{a\} \times \mathbb{S}^1$, which are compact. Note however that $h$ is the quotient algebra $(\mu_\RR)^\ZZ$ of the free algebra $\mu_\RR$ by the group $\ZZ$ acting by addition on the fibers of $T\RR$.

\subsection{Semi-affine algebras}

To give a few more examples of algebras, we consider the slightly more general ``semi-affine'' case of maps $h \colon TU \to U$, where $U$ is an open subset of an affine space of direction space $E$. These are the algebras of the form
\begin{equation}
h(x,v) = x + A_x v
\end{equation}
where $A \colon U \to \calL(E)$ is a smooth function, which we consider as a $(1,1)$-tensor field in the finite-dimensional case.

\begin{prop}
Let $U$ be an open subset of a finite-dimensional affine space.
Let $A$ be a $(1,1)$-tensor field on $U$ and let $h \colon TU \to U$ be defined by
\begin{equation*}
h(x,v) = x + A_x v.
\end{equation*}
for $(x,v) \in TU$. If $h$ is an algebra, then $A^2=0$ and the Nijenhuis tensor of $A$ vanishes.
\end{prop}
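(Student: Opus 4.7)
The plan is to extract the two claimed conditions, $A^2 = 0$ and $N_A = 0$, by differentiating and Taylor-expanding the second axiom for algebras (equation~\eqref{eq:algaxb}) applied to the specific ansatz $h(x,v) = x + A_x v$.

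First, I would dispose of $A^2 = 0$ at once. Since $h_x(v) = x + A_x v$ is affine in $v$ with linear part $A_x$, the associated endomorphism in the sense of the previous section is $A^h_x = h_x'(0) = A_x$. The general nilpotency relation~\eqref{eq:nilp}, already derived from the algebra axioms, yields $A_x^2 = 0$ at every $x \in U$.

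Next, for the Nijenhuis condition, I would expand~\eqref{eq:algaxb} in coordinates. Computing
\[
h'(x,v)(\dot{x},\dot{v}) = \dot{x} + (DA)_x(\dot{x})\,v + A_x \dot{v},
\]
the axiom becomes
\[
A_{x + A_x v}\bigl(\dot{x} + (DA)_x(\dot{x})\,v + A_x \dot{v}\bigr) = A_x \dot{x}.
\]
I would then Taylor-expand $A_{x + A_x v} = A_x + (DA)_x(A_x v) + O(v^2)$, use $A_x^2 = 0$ to cancel the obvious $A_x \dot{x}$ term and the $A_x^2 \dot{v}$ term, and set $\dot{v} = 0$. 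Matching the coefficient of $v$ to zero yields
\[
A_x\, (DA)_x(\dot{x})\,v + (DA)_x(A_x v)\,\dot{x} = 0. \qquad (\ast)
\]

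Finally, I would swap the roles of $v$ and $\dot{x}$ in $(\ast)$ to get a companion identity, and match both against the Nijenhuis tensor. In an affine chart, constant vector fields commute, and for constant $X,Y$ one has $[AX, Y]_x = -(DA)_x(Y)\,X$ and $[AX, AY]_x = (DA)_x(A_x X)\,Y - (DA)_x(A_x Y)\,X$. Using $A^2 = 0$,
\[
N_A(X,Y) = [AX, AY] - A[AX, Y] - A[X, AY],
\]
so evaluating at $x$ with $X = v$, $Y = \dot{x}$ gives four terms of the forms $(DA)_x(A_x\, \cdot)(\cdot)$ and $A_x(DA)_x(\cdot)(\cdot)$; by $(\ast)$ and its swap they cancel in pairs, so $N_A(v,\dot{x})_x = 0$. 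Since $v, \dot{x}$ are arbitrary and $N_A$ is tensorial, $N_A \equiv 0$.

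The main obstacle is purely bookkeeping: keeping the two slots of $(DA)_x(\cdot)(\cdot)$ straight (differentiation direction vs. argument of the resulting endomorphism) so that the four terms of $N_A$ are correctly paired with $(\ast)$ and its swap. No genuinely new idea is required beyond the Taylor expansion plus symmetrization; the affine structure on $U$ is what lets Lie brackets be computed as ordinary directional derivatives and what makes the identification with the Nijenhuis tensor transparent.
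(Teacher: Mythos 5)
Your proposal is correct and follows essentially the same route as the paper: $A^2=0$ comes from the already-established nilpotency of the associated endomorphism since $A_x = h_x'(0)$, and the Nijenhuis condition comes from extracting the first-order term in $v$ of the second algebra axiom (with $\dot v = 0$), giving $A_x\bigl((DA)_x(\dot x)v\bigr) + (DA)_x(A_xv)\dot x = 0$, which together with its $(v,\dot x)$-swap kills the four terms of $N_A$ in pairs. The paper organizes the final identification by first rewriting $N_A(X,Y) = (AX\cdot A)Y - (AY\cdot A)X - A((X\cdot A)Y) + A((Y\cdot A)X)$, which is exactly your constant-vector-field computation in an affine chart.
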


Recall that the Nijenhuis tensor of a $(1,1)$-tensor $A$ is the $(2,1)$-tensor defined by
\begin{equation}
N_A(X,Y) = [AX,AY] - A \big( [AX,Y] + [X,AY] - A[X,Y] \big)
\end{equation}
for any vector fields $X, Y \in \frakX(M)$.
Writing $[X,Y] = X \cdot Y - Y \cdot X$, we have
\begin{align*}
N_A(X,Y) & = AX \cdot AY - AY \cdot AX\\
&\qquad - A \big( AX \cdot Y - Y \cdot AX + X \cdot AY - AY \cdot X - A(X \cdot Y - Y \cdot X) \big)\\
& = \big( AX \cdot AY - A(AX \cdot Y) \big) - \big( AY \cdot AY - A(AY \cdot X) \big)\\
&\qquad - A \big( X \cdot AY - A(X \cdot Y) \big) + A \big( Y \cdot AX - A(Y \cdot X) \big)
\end{align*}
and using the relation $Z \cdot BT = (Z \cdot B)T + B(Z \cdot T)$ for any vector fields $Z$ and $T$ and  any $(1,1)$-tensor $B$, we obtain
\begin{equation*}
N_A(X,Y) = (AX \cdot A)Y  - (AY \cdot A)X - A((X \cdot A)Y) + A((Y \cdot A)X).
\end{equation*}

\begin{proof}
That $A^2=0$ was already proved.
The second axiom for algebras with $\dot{v}=0$ gives $A_{x+A_x v} (\dot{x} + A'_x(\dot{x})v ) = A_x \dot{x}$. Differentiating with respect to $v$ at $v=0$ gives
\[
A'_x \big( A_x(v') \big) (\dot{x}) + A_x \big( A'_x(\dot{x})v' \big) = 0.
\]
Therefore, if $X$ and $Y$ are vector fields extending $\dot{x}$ and $v'$ respectively, we have
\[
(AY \cdot A)X + A ( (X \cdot A)Y ) = 0
\]
and similarly with $X$ and $Y$ exchanged,
which implies that $N_A=0$.
\end{proof}

If $h$ is as in the proposition with $A_x=f(x)B$ where $f \in C^\infty(U)$ and $B^2=0$, then $h$ is an algebra if and only if $f$ is constant on each translate of $\im B$, as the second algebra axiom shows. Combining this with the quotient construction gives more examples of algebras, for instance, we can define an algebra on $\RR^n \times \TT^n$ by
\[
h(x,\theta,\dot{x},\dot{\theta}) = (x,\theta+f(x)p(\dot{x}))
\]
for any function $f \in C^\infty(\RR)$, where $p \colon \RR^n \to \TT^n$ is the quotient map. The leaves of $h$ are tori and they are ``travelled'' at different speeds, parametrized by the other factor, $\RR^n$.\\

An \define{almost tangent structure} on a manifold is a $(1,1)$-tensor field $A$ such that $\im A = \ker A$.
It is \define{integrable} if its Nijenhuis tensor vanishes.
Therefore, we have proved the following (in finite dimension):

\begin{prop}
The associated endomorphism of a semi-affine algebra of maximal rank is an integrable almost tangent structure.
\end{prop}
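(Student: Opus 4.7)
The plan is to recognize this statement as an immediate corollary of the preceding proposition together with the relevant definitions; no new computation is required.

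First I would identify the $(1,1)$-tensor field $A$ appearing in the statement with the associated endomorphism of the algebra $h$ in the sense of Section~\ref{sec:alg}. For a semi-affine $h(x,v) = x + A_x v$, differentiating $v \mapsto h(x,v)$ at $v=0$ gives $A^h_x = {h_x}'(0) = A_x$. Thus the hypothesis that $h$ has maximal rank translates directly, by the definition of maximal rank, into the conditions that $\im A$ and $\ker A$ be subbundles and that $\im A = \ker A$. This is precisely the defining property of an almost tangent structure on $U$.

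The remaining half of the conclusion is that $A$ be \emph{integrable}, i.e., that its Nijenhuis tensor $N_A$ vanish identically. But this is exactly part of the conclusion of the previous proposition, which asserts that for any semi-affine algebra $h(x,v) = x + A_x v$ one has $A^2 = 0$ and $N_A = 0$. Combining the two observations yields the claim.

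There is essentially no obstacle: the substantive content lies in the preceding proposition (whose nontrivial part is the vanishing of $N_A$, obtained by differentiating the second algebra axiom twice along the zero section), and the present statement is just the pleasant reformulation one gets by specializing to maximal rank and recognizing the resulting structure on $A$ as an integrable almost tangent structure in the standard sense.
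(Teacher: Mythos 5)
Your proposal is correct and matches the paper exactly: the paper states this proposition with the preamble ``Therefore, we have proved the following,'' treating it as an immediate consequence of the preceding proposition (which gives $A^2=0$ and $N_A=0$ for any semi-affine algebra) combined with the definitions of maximal rank and of an (integrable) almost tangent structure. Your identification of the semi-affine tensor $A$ with the associated endomorphism ${h_x}'(0)$ and the translation of maximal rank into $\im A = \ker A$ is precisely the intended reading.
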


\section{Hopf monads and their Hopf algebras}

\subsection{Comonads in the affine category}

We have seen above that there is only one monad and no comonad on the tangent functor in the categories of smooth and analytic manifolds. However in the category of affine manifolds, there are much ``fewer'' morphisms, and therefore being natural is less demanding.

The zero section $\zeta \colon \id_{\cat{AffMan}} \Rightarrow T$ is still the only natural transformation between these functors, by Lemma~\ref{lem:uniqNat}. Similar arguments show that the only natural transformations from $T$ to $\id_{\cat{CompAffMan}}$ are the ``projections'' $\tau^t$ with $t \in \KK$ defined in charts by
\[
\tau^t_M \colon (x,v) \mapsto x + tv,
\]
the natural transformations from $T^2$ to $T$ are of the form $(x,v,\dot{x},\dot{v}) \mapsto (x+a'v+b'\dot{x}+c'\dot{v},av+b\dot{x}+c\dot{v})$ and the natural transformations from $T$ to $T^2$ are of the form $(x,v) \mapsto (x+a'v,av,bv,cv)$. In the category of (not necessarily complete) affine manifolds, we must impose $t=a'=b'=c'=0$. Using the (co)unit laws and the (co)associativity axioms, we obtain the following result.

\begin{prop}
In the categories of affine manifolds and of complete affine manifolds, the only monads on the tangent functor are the $(T,\zeta,\mu^a)$ where
\[
\mu^a_M \colon (x,v,\dot{x},\dot{v}) \mapsto (x,v+\dot{x}+a\dot{v})
\]
for any $a \in \KK$, and the only comonads on the tangent functor are the $(T,\tau,\delta^b)$ where
\[
\delta^b_M \colon (x,v) \mapsto (x,v,v,bv)
\]
for any $b \in \KK$.
\end{prop}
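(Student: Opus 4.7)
The proof rests on the classification of natural transformations established just above: the only natural transformation $\id \Rightarrow T$ is the zero section $\zeta$ (Lemma~\ref{lem:uniqNat}), every natural transformation $T^2 \Rightarrow T$ has the form $(x,v,\dot x,\dot v) \mapsto (x+a'v+b'\dot x+c'\dot v,\, av+b\dot x+c\dot v)$, every $T \Rightarrow T^2$ has the form $(x,v) \mapsto (x+a'v, av, bv, cv)$, and every $T \Rightarrow \id$ is a projection $\tau^t$; in the non-complete affine category, all primed parameters and $t$ are additionally forced to vanish. The strategy is therefore to substitute these families into the (co)monad axioms and read off the surviving parameters.

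For the monad case, the unit of any monad $(T,\eta,\mu)$ must be $\eta=\zeta$ by the above uniqueness. I would write the general form of $\mu$ and apply the two unit laws $\mu \circ \zeta T = \id_T = \mu \circ T\zeta$, using $\zeta_{TM}(x,v)=(x,v,0,0)$ and $T\zeta_M(x,v)=(x,0,v,0)$; this gives
\[
\mu_M(x,v,0,0) = (x+a'v,\, av) = (x,v) \quad\text{and}\quad \mu_M(x,0,v,0) = (x+b'v,\, bv) = (x,v),
\]
forcing $a'=b'=0$ and $a=b=1$. This leaves $\mu_M(x,v,\dot x,\dot v) = (x+c'\dot v,\, v+\dot x+c\dot v)$, with $c'=0$ already automatic in the non-complete case. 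The associativity axiom $\mu\circ\mu T = \mu\circ T\mu$, expanded in a chart of $T^3M$ via the identification $T^2(TM)\cong T(T^2M)$, then constrains the remaining parameter to yield the one-parameter family $\mu^a$. I would finally check directly that $(T,\zeta,\mu^a)$ satisfies both unit laws and associativity for every $a\in\KK$.

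Dually, for the comonad case the counit has the form $\epsilon=\tau^t$ and the comultiplication has the form $\delta_M(x,v)=(x+a'v, av, bv, cv)$. The two counit laws $\epsilon T\circ\delta = \id_T = T\epsilon\circ\delta$ give
\[
a'+tb=0,\qquad a+tc=1,\qquad a'+ta=0,\qquad b+tc=1,
\]
which combined force $a=b$. In the non-complete case $t=0$ is automatic, whence $a=b=1$, $a'=0$, and $\delta=\delta^c$. Coassociativity $\delta T\circ\delta = T\delta\circ\delta$ is then checked directly in charts, exhibiting each $\delta^b$ as the comultiplication of a comonad.

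The delicate step is the (co)associativity computation in the complete affine category: the (co)unit laws alone leave one more parameter than the stated answer, so one must expand both sides of the (co)associativity diagram in a chart of $T^3M$ and compare coefficients of all eight independent higher-order variables in order to kill the extra parameter, and, in the comonad case, to force $t=0$ on the counit.
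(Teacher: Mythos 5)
Your overall strategy --- substitute the classified general forms of the natural transformations into the (co)unit and (co)associativity axioms --- is exactly the one the paper intends (its proof consists of the single sentence preceding the proposition), and your treatment of the unit and counit laws is correct: they force $a'=b'=0$ and $a=b=1$ for the monad, and $a=b$, $a'=-ta$, $a+tc=1$ for the comonad. In $\cat{AffMan}$, where $t=a'=b'=c'=0$ is already imposed, this does finish the job, since (co)associativity is then easily checked to hold for every value of the remaining parameter.

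The gap is at the step you yourself flag as delicate. You assert that in $\cat{CompAffMan}$ the associativity computation kills the leftover parameter $c'$ (and, dually, that coassociativity forces $t=0$), but you do not carry the computation out, and when one does, it does not do what you claim. Take $\mu_M(x,v,\dot x,\dot v)=(x+c'\dot v,\,v+\dot x+c\dot v)$, which is what survives the unit laws. Writing a point of $T^3M$ as $\bigl((x,v),(\dot x,\dot v),(x',v'),(\dot x',\dot v')\bigr)$, both $\mu_M\circ\mu_{TM}$ and $\mu_M\circ T\mu_M$ evaluate to
\[
\bigl(x+c'(\dot v+v'+\dot x')+cc'\dot v',\;\; v+\dot x+x'+c(\dot v+v'+\dot x')+(c'+c^{2})\dot v'\bigr),
\]
so associativity holds identically in $c$ and $c'$. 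For instance $\mu_M(x,v,\dot x,\dot v)=(x+\dot v,\,v+\dot x)$ together with $\zeta$ satisfies both unit laws and associativity, and it is natural for affine maps because $f''=0$ makes $\dot v$ transform linearly. Dually, $\epsilon=\tau^{1}$ with $\delta_M(x,v)=(x-v,v,v,0)$ passes both counit laws and coassociativity. So the axioms alone do not eliminate the extra parameters in the complete affine category: either some further argument beyond the (co)monad axioms is needed there, or the classification must be enlarged; in either case your proof as written does not establish the stated proposition for $\cat{CompAffMan}$.
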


The comultiplications $\delta^b$ associate naturally to each affine manifold a second-order vector field, or semispray. This semispray is a spray if and only if $b = 0$, in which case it is the geodesic spray.

A coalgebra on the manifold $M$ over this comonad is a map $X \colon M \to TM$ such that $\tau_M \circ X = \id_M$ and $TX \circ X = TX \circ \delta^b_M$. The first condition says that $X$ is a vector field on $M$, so it is of the form $X(x) = (x, X_x)$ and the second condition reads in charts $X'_x(X_x) = b X_x$.

In a chart $(U,\phi)$, $X$ is of the form $X_x = X_{x_0} + B (x-x_0)$ with $x_0 \in U$ and $b\in E$ and $B \in \calL(E)$. Then, coassociativity implies
\[
BX_{x_0} = b X_{x_0}
\quad\text{and}\quad
B^2 = b B
\]
and similarly, algebras are now of the form $h(x,v) = x + Av$ with $A^2 = aA$.

\subsection{Hopf monads in the affine category}

In this subsection, we combine the tangent functor monads and comonads, to form Hopf monads.
The notions of bimonad and Hopf monad, defined in~\cite{hopf}, are analog to those of bialgebra and Hopf algebra.

\begin{thm}[Tangent functor Hopf monads]
The quintuple
\begin{equation}
(T,\zeta, \mu^a,\tau,\delta^b)
\end{equation}
for $a, b \in \KK$ is a bimonad in $\cat{AffMan}$. In both $\cat{AffMan}$ and $\cat{CompAffMan}$, it is a Hopf monad if and only if $1+ ab \neq 0$, and the antipode is then the natural transformation $\sigma = m_{-(1+ab)^{-1}}$ of $T$, the fiberwise multiplication by $-\frac1{1+ab}$.
In particular, if $ab=0$, the antipode is the fiberwise opposite.
\end{thm}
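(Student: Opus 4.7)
The plan is to verify all the axioms by direct computation in local affine charts, exploiting the fact that $\zeta$, $\mu^a$, $\tau$ and $\delta^b$ are affine with explicit formulas; each axiom then reduces to an elementary polynomial identity in the coordinates $(x,v,\dot{x},\dot{v})$.

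For the bimonad axioms, I would check in charts the compatibilities between the monad structure $(\zeta,\mu^a)$ and the comonad structure $(\tau,\delta^b)$. The unit-counit axiom $\tau \circ \zeta = \id$ is immediate. The counit compatibility $\tau \circ \mu^a = \tau \circ \tau T = \tau \circ T\tau$ reduces to $(x,v,\dot{x},\dot{v}) \mapsto x$ on all sides, and dually $\delta^b \circ \zeta = \zeta T \circ \zeta = T\zeta \circ \zeta$ gives $x \mapsto (x,0,0,0)$. The genuinely non-trivial axiom is the distributive law between $\mu^a$ and $\delta^b$, verified by tracking a generic $(x,v,\dot{x},\dot{v}) \in T^2M$ through both composites; this is the only compatibility in which the parameters $a$ and $b$ interact multiplicatively.

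For the Hopf condition, I would take the antipode candidate $\sigma_M(x,v) = (x,\lambda v)$ with $\lambda \in \KK$ to be determined, and impose
\begin{equation*}
\mu^a \circ T\sigma \circ \delta^b = \zeta \circ \tau = \mu^a \circ \sigma T \circ \delta^b.
\end{equation*}
Substituting $\delta^b_M(x,v) = (x,v,v,bv)$ and $T\sigma_M(x,v,v,bv) = (x,\lambda v,v,\lambda b v)$, the composite $\mu^a_M \circ T\sigma_M \circ \delta^b_M$ evaluates at $(x,v)$ to $\bigl(x, (1+\lambda(1+ab))v\bigr)$. Equating with $\zeta_M(\tau_M(x,v)) = (x,0)$ forces $\lambda(1+ab) = -1$. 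The symmetric computation with $\sigma T_M(x,v,v,bv) = (x,v,\lambda v,\lambda b v)$ yields the same equation, so the antipode axiom is solvable if and only if $1+ab \neq 0$, with the unique solution $\lambda = -1/(1+ab)$. When $ab = 0$, this specializes to $\lambda = -1$, the fiberwise opposite.

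The main obstacle I anticipate is the bimonad compatibility between $\mu^a$ and $\delta^b$: it is the only axiom in which the formal parameters mix multiplicatively, and it requires careful bookkeeping of the $T^2M$ coordinates on both sides. Beyond this, the computations are mechanical, and uniqueness of $\sigma$ within the family $m_\lambda$ follows directly from the antipode axiom, which determines its action on each fiber.
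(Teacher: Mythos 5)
Your antipode computation is correct and is essentially the paper's: both composites $\mu^a \circ T\sigma \circ \delta^b$ and $\mu^a \circ \sigma T \circ \delta^b$ evaluate at $(x,v)$ to $\bigl(x,(1+\lambda(1+ab))v\bigr)$, forcing $\lambda=-1/(1+ab)$. The real problem is in the bimonad part. In the definition the paper works with (\cite[Def.~4.1]{hopf}), the compatibility between $\mu^a$ and $\delta^b$ is \emph{not} an identity between two predetermined composites that you can verify by ``tracking a generic $(x,v,\dot{x},\dot{v})$ through both sides'': it is the requirement that there \emph{exist} a mixed distributive law (entwining) $\lambda\colon T^2 \Rightarrow T^2$ satisfying $\delta^b\circ\mu^a = T\mu^a\circ \lambda T\circ T\delta^b$. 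The entwining is extra data that must be produced before any coordinate bookkeeping can start, and your plan never constructs it. The paper exhibits $\lambda_M(x,v,\dot{x},\dot{v}) = (x,\dot{x},\,v+\dot{x}+a\dot{v},\,b\dot{x}-\dot{v})$ and only then runs the computation you describe; guessing (or systematically deriving) this $\lambda$, and checking it is a natural transformation in the affine category, is the one genuinely non-mechanical step of the bimonad verification, and it is missing from your proposal.

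A second, smaller gap concerns the ``only if'' direction of the Hopf condition. You take the antipode ansatz $\sigma_M(x,v)=(x,\lambda v)$ and prove uniqueness ``within the family $m_\lambda$''. To conclude that \emph{no} antipode exists when $1+ab=0$, you must rule out every natural transformation $T\Rightarrow T$, and in $\cat{CompAffMan}$ these form the strictly larger family $\sigma_M(x,v)=(x+sv,tv)$ with $s,t\in\KK$. The paper runs the same computation on this general form and finds that the antipode equation forces $s=0$ and $1+(1+ab)t=0$, so the conclusion is unchanged, but as written your argument only excludes antipodes of the special fiberwise-scaling form.
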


\begin{proof}
From the definition of a bimonad in~\cite[Def.4.1]{hopf}, we have to prove the commutativity of the three diagrams
\[
\xymatrix{
T^2 \ar[r]^{T\tau} \ar[d]_{\mu^a} & T \ar[d]^{\tau}\\
T \ar[r]_{\tau} & \id
}
\qquad
\xymatrix{
\id \ar[r]^{\zeta} \ar[d]_{\zeta} & T \ar[d]^{\delta^b}\\
T \ar[r]_{\zeta T} & T^2
}
\qquad
\xymatrix{
\id \ar[r]^{\zeta} \ar[dr]_{\id} & T \ar[d]^{\tau}\\
& T
}
\]
which mean that $\tau$ is a monad morphism from $(T,\zeta,\mu^a)$ to the identity and $\zeta$ is a comonad morphism from the identity to $(T,\tau,\delta^b)$. The  commutativity is obvious. We also have to prove the existence of a natural transformation (a ``mixed distributive law'', or ``entwining'') $\lambda \colon T^2 \Rightarrow T^2$ such that the diagram
\[
\xymatrix{
T^2 \ar[r]^{\mu^a} \ar[d]_{T\delta^b} & T \ar[r]^{\delta^b} & T^2\\
T^3 \ar[rr]_{\lambda T} && T^3 \ar[u]_{T\mu^a}
}
\]
commutes.
Setting $\lambda_M \colon (x,v,\dot{x},\dot{v}) \mapsto (x,\dot{x},v+\dot{x}+a\dot{v}, b\dot{x} - \dot{v})$ fulfills this condition:
\[
(x,v,\dot{x},\dot{v})
\xrightarrow{\mu^a_M}
(x,v+\dot{x}+a\dot{v})
\xrightarrow{\delta^b_M}
(x,v+\dot{x}+a\dot{v},v+\dot{x}+a\dot{v},b(v+\dot{x}+a\dot{v}))
\]
while
\begin{multline*}
(x,v,\dot{x},\dot{v})
\xrightarrow{T\delta^b_M}
(x,v,v,bv,\dot{x},\dot{v},\dot{v},b\dot{v})
\xrightarrow{\lambda_{TM}}
(x,v,\dot{x}, \dot{v}, v+\dot{x}+a\dot{v}, bv+\dot{v}+ab\dot{v}, b\dot{x}-\dot{v}, 0 ) \\
\xrightarrow{T\mu^a_M}
(x,v+\dot{x}+a\dot{v},v+\dot{x}+a\dot{v}, bv + b\dot{x} + ab\dot{v}).
\end{multline*}

If it exists, an antipode is a natural transformation $\sigma \colon T \Rightarrow T$ such that
\[
\mu^a \circ \sigma T \circ \delta^b = \mu^a \circ T\sigma \circ \delta^b = \zeta \circ \tau.
\]
A natural transformation $\sigma \colon T \Rightarrow T$ has the form $\sigma_M(x,v) = (x+sv,tv)$ for $s,t \in \KK$.
Therefore $(x,v,v,bv) \xrightarrow{\sigma_{TM}} (x+sv,(1+bs)v,tv,tbv) \xrightarrow{\mu^a_M} (x+sv,(1+bs+t+abt)v)$,
so we need $s=1+bs+t+abt=0$, so $s=0$ and $1+(1+ab)t=0$. So we need $1+ab \neq 0$ and we set $t=-\frac1{1+ab}$. Then, we also have
$(x,v,v,bv) \xrightarrow{T\sigma_M} (x,tv,v,btv) \xrightarrow{\mu^a_M} (x,(1+t+abt)v)=(x,0)$.
\end{proof}

A \define{Hopf module} for this bimonad is a pair $(h,X)$ where $h \colon TM \to M$ is an algebra and $X \colon M \to TM$ is a coalgebra, which are compatible in the following sense (see~\cite[Def.4.2]{hopf}): the diagram
\[
\xymatrix{
TM \ar[r]^h \ar[d]_{TX} & M \ar[r]^X & TM\\
T^2M \ar[rr]_{\lambda_M} && T^2M \ar[u]_{Th}
}
\]
should commute. If $h$ and $X$ are given by $A, B \in \calL(E)$ respectively, then this is equivalent to
\begin{equation}
\label{eq:hopfMod}
(A-a)B + (B-b)A = \id_E
\end{equation}
in addition to $A^2 = aA$ and $B^2 = bB$.\\

As an example, we determine the Hopf modules $(h,X)$ on $\KK^2$, where $h$ is given by $A$ and $X$ is given by $X_0=X_{(0,0)}$ and $B$.
First, we suppose that $a=0$.
Then $A \neq 0$ by~\eqref{eq:hopfMod}, so we can suppose, up to a change of basis, that $A=\begin{pmatrix}0&1\\0&0\end{pmatrix}$.
Then $B=\begin{pmatrix}t&t(b-t)\\1&b-t\end{pmatrix}$ for some $t \in \KK$ and $X_0 \in \KK \vect{t}{1}$.
If $a \neq 0$, then there are three cases: $A=0$ or $A=a\id$ or up to a change of basis, $A=\begin{pmatrix}a&0\\0&0\end{pmatrix}$.
If $A=0$, then $ab+1=0$ and $B=b \id$, so $X_{(0,0)}$ can be any vector.
If $A=a \id$, then $ab+1=0$ and $X=0$.
If $A=\begin{pmatrix}a&0\\0&0\end{pmatrix}$, then: if $ab+1=0$, then
$B=\begin{pmatrix}0&0\\t&b\end{pmatrix}$ for some $t \in \KK$ and $X_0 \in \KK \vect{0}{1}$, else
$B=\begin{pmatrix}\frac1a+b&\frac1a+b\\-\frac1a&-\frac1a\end{pmatrix}$ and $X_0 \in \KK \vect{ab+1}{-1}$.
Conversely, these pairs $(h,X)$ are Hopf modules.

\chapter{The foliation induced by an algebra}
\label{chap:foli}

In this chapter, we prove our main structure theorem, which links the study of algebras over the tangent functor monad to the study of foliations. To do this, we introduce the accessible sets of an algebra and show that they form a foliation. The dimensions of the leaves of these foliations may vary, so these are singular, or \v{S}tefan foliations. Extensions of the classical theorem of Fr\"{o}benius were found by \v{S}tefan and Sussmann among others, and we review the necessary elements of their work in Section~\ref{sec:stefan}. We then prove the main theorem, stating that tame algebras induce foliations. This gives rise to the notion of monadic foliation: a foliation which arises in this way from an algebra. We give a necessary condition for a foliation to be monadic in terms of its holonomy, where we use a result akin to control theory. Finally, we see how to reduce the study of algebras to the study of orientable ones.

\section{Accessible sets of an algebra}

If $h$ is an algebra on the manifold $M$ and $x \in M$, we define the \define{accessible set}, or \define{leaf} of $x$ for the algebra $h$ to be the subset
\begin{equation}
L_x = h_x(T_xM)
\end{equation}
of $M$. The accessible sets are path-connected as continuous images of path-connected spaces. Algebra morphisms map accessible sets into accessible sets. We begin by proving the following easy set-theoretic result.

\begin{prop}
The accessible sets of an algebra form a partition of the base manifold.
\end{prop}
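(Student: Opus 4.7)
The plan is to show that the relation on $M$ defined by $x \sim y$ iff $y \in L_x$ is an equivalence relation; its equivalence classes then coincide with the accessible sets, so the latter form a partition of $M$.

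Reflexivity is immediate from the first algebra axiom $h(x,0) = x$, which gives $x \in L_x$. For symmetry, suppose $y = h(x,v)$ for some $v \in T_xM$. I would apply the second algebra axiom~\eqref{eq:algaxb} to the point $(x,v,-v,0) \in T^2M$: the right-hand side collapses to $h(x, v + (-v)) = h(x,0) = x$, while the left-hand side is $h(y, h'(x,v)(-v,0))$. Thus $x = h(y, w_0)$ with $w_0 = h'(x,v)(-v,0) \in T_yM$, so $x \in L_y$.

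For transitivity, suppose $y \in L_x$ and $z \in L_y$, and write $z = h(y,w)$. By the symmetry step just established, there exists $v' \in T_yM$ with $x = h(y,v')$. Applying~\eqref{eq:algaxb} at the point $(y, v', w - v', 0) \in T^2M$, the right-hand side equals $h(y, v' + (w - v')) = h(y,w) = z$, and the left-hand side equals $h(x, u)$ with $u = h'(y,v')(w - v', 0) \in T_xM$. Hence $z = h(x,u) \in L_x$, which is what was needed.

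The main conceptual obstacle, which this argument is designed to avoid, is that a direct attack on transitivity via the axiom at $(x,v,\dot{x},\dot{v})$ would force one to write the arbitrary $w \in T_yM$ as $h'(x,v)(\dot{x},\dot{v})$, that is, to invoke surjectivity of $h'(x,v)$; but $h$ is only guaranteed to be a submersion in a neighborhood of the zero section, so for large $v$ this may fail. Routing through the symmetry step lets us work instead from the reversed base point $(y,v')$ with $h(y,v') = x$, after which the free $\dot{x}$-parameter of the axiom lands us precisely at the prescribed target $z$, with no surjectivity hypothesis required.
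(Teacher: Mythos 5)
Your proof is correct and rests on exactly the same mechanism as the paper's: applying the second algebra axiom at a point with freely chosen $\dot{x}$-component to re-base an accessible set at any of its points (the paper's version of this is the step ``$z \in L_x$ implies $L_x \subseteq L_z$,'' of which your symmetry and transitivity steps are instances). The only difference is organizational --- you package the argument as verifying an equivalence relation rather than as ``cover plus intersecting classes coincide'' --- and your closing remark correctly identifies why the free $\dot{x}$-parameter, rather than surjectivity of $h'(x,v)$, is what makes the argument go through.
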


\begin{proof}
The accessible sets obviously cover $M$ since for all $x \in M$, $x \in L_x$. Therefore we are left to show that $L_x \cap L_y \neq \varnothing$ implies $L_x = L_y$ for any $x,y \in M$, and we will do it in three steps.
First, $z \in L_x$ implies $L_x \subseteq L_z$.
Indeed, $z \in L_x$ means that there is a $v \in T_xM$ such that $z=h(x,v)$, and if $x_1 \in L_x$, then there is a $v_1 \in T_xM$ such that $h(x,v_1)=x_1$. Choose a $\xi \in T_{(x,v)}TM$ such that $T\tau_M(\xi)=v_1-v$. Then
\begin{equation*}
x_1 = h(x,v_1) = h \big( x,v+(v_1-v) \big) = h \big( h(x,v),h'(x,v)(\xi) \big) = h \big( z,h'(x,v)(\xi) \big) \in L_z.
\end{equation*}
Second, $z \in L_x$ implies $L_x = L_z$. Indeed, by the above, if $z \in L_x$, then $L_x \subseteq L_z$, so in particular $x \in L_z$, so $L_z \subseteq L_x$, so $L_x = L_z$.
Third, if $L_x \cap L_y \neq \varnothing$, then there is a $z \in L_x \cap L_y$, so $L_x = L_z = L_y$.
\end{proof}

We are going to prove that much more is true: the accessible sets are initial submanifolds of $M$ that ``fit together nicely'', in other words, they form a foliation. Since the leaves may have different dimensions, this type of foliation is sometimes called a singular foliation, or \v{S}tefan foliation.

\section{Distributions, foliations, integrability}
\label{sec:stefan}

First, we recall the notion of a (singular) distribution. These distributions need not have constant rank, so, even when smooth, they need not be subbundles of the tangent bundle.

\begin{defi}
A \define{distribution} $\calD$ on a manifold $M$ is the data of a complemented vector subspace of $T_xM$ at each point of $x \in M$.
A local vector field $X \in \frakX_U(M)$ is said to belong to $\calD$ if for any $x \in U$, $X_x \in \calD_x$. In that case, we write $X \in \frakX_U(\calD)$.
\end{defi}

In other words, a distribution on a manifold $M$ is a (set-theoretic) section of the Grassmannian bundle $\mathrm{Gr}(TM) \to M$.
The ``distribution'' induced by an algebra is a genuine distribution as soon as the algebra is tame or has pointwise finite rank.

The \define{rank} of the distribution $\calD$ at $x$ is the dimension, finite or infinite, of $\calD_x$.
The rank at a point of the distribution induced by an algebra is the rank of the algebra at that point.

\begin{defi}[Smooth distribution, see~\cite{nubel}]
A distribution is \define{smooth} if it is locally supported by vector subbundles. Explicitly, a distribution $\calD$ on $M$ is smooth if for any $x \in M$ there is an open neighborhood $U$ of $x$ and a vector subbundle $B$ of $TU$ such that $B \subseteq \calD_U$ and $B_x = \calD_x$.
\end{defi}

Being smooth implies, and in the case of pointwise finite rank is equivalent to, being generated by its local vector fields, meaning: for any $v \in \calD_x$, there exists a local vector field $X \in \frakX_U(\calD)$ with $X_x=v$. In a sense, ``smooth'' can be thought of as ``lower semiregular''.
As an example, consider the distributions on $\RR$ given by $\calD_x = \spann(x e_1)$ and $\calD'_x = \ker(x \,dx)$.
Then $\calD$ is smooth but $\calD'$ is not.

A \define{regular} distribution is a vector subbundle of $TM$.
In particular, it is smooth.
For distributions of pointwise finite rank, being regular is equivalent to being smooth and having locally constant rank.

We now turn to the integrability of foliations. In the regular case, this is equivalent to involutivity under the Lie bracket: this is the classical Fr\"{o}benius theorem.
Involutivity is not sufficient anymore in the singular case, and integrability theorems in the singular case were given by Sussmann (\cite{suss}) and \v{S}tefan (\cite{ste74b,ste74,ste80}).
There are several equivalent definitions of a singular foliation. Here is one (see also Michor~\cite[Ch.I 3.18--3.28]{mich}, or~\cite[\S8.2]{prad} for an emphasis on lower semiregularity).

\begin{defi}
A \define{foliation} of a manifold $M$ is a partition of $M$ into connected initial submanifolds $S_x$ called ``leaves'', such that for every $x \in M$, there is a chart $(U,\phi)$ such that $\phi(U)= V \times W$ with $V$ (resp. $W$) a neighborhood of 0 in a Banach space $E$ (resp. $F$), $\phi(x)=(0,0)$, and if $S$ is a leaf, then $\phi(U \cap S) = V \times A$ for some $A \subseteq W$.
\end{defi}

\begin{rmk}
In the definition of a foliation, it is sufficient, in finite dimension, to suppose that the leaves form a partition into connected immersed submanifolds: that they are initial submanifolds is a consequence of the other conditions, although less easy than in the regular case (see~\cite{kuba}).
\end{rmk}

Actually, \v{S}tefan's work not only gives necessary and sufficient conditions for integrability of smooth distributions, but it constructs a foliation from \emph{any} smooth distribution. This foliation then has a tangent distribution, larger than the initial distribution, and some conditions for equality can be given.

If $\calD$ is a smooth distribution on a manifold $M$, we denote by $\Psi$ the set of local diffeomorphisms of $M$ which are the composition of a finite number of time-1 flows of local vector fields in $\calD$.
We denote by $\Psi_{x \to y}$ the set of those local diffeomorphisms which send $x$ to $y$.
This defines an equivalence relation, namely $x$ equivalent to $y$ if $\Psi_{x \to y} \neq \varnothing$.
The equivalence class of $x$ is denoted by $S_x$ and is called the \define{orbit}, or \v{S}tefan-accessible set, of $x$.

We can now state the following theorem, due to \v{S}tefan~\cite[Theorem~1]{ste74}, which will be a key ingredient in our proof of the main theorem.

\begin{thm}
The orbits of any smooth distribution $\calD$ are the leaves of a foliation with associated distribution $\check{\calD}$, where
\begin{equation*}
\check{\calD}_x = \sum_{y \in S_x, \psi \in \Psi_{y \to x}} T_y\psi (\calD_y)
\end{equation*}
for any $x \in M$.
\end{thm}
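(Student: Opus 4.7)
The plan is to build, for each $x \in M$, a foliation chart whose plaques lie inside the orbit of $x$ and whose tangent spaces realize $\check{\calD}$. I will work locally, treating one orbit at a time.

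First, I would establish that $\check\calD$ is invariant under the pseudogroup $\Psi$ generated by time-$1$ flows of local vector fields in $\calD$. Concretely, if $\psi \in \Psi$ with $\psi(x)=y$, then by composing with elements of $\Psi_{z \to x}$ one sees that $T_x\psi(\check\calD_x) \subseteq \check\calD_y$; running the argument with $\psi^{-1}$ gives equality. In particular, $\check\calD$ has constant rank along each orbit. Set $k = \dim \check\calD_x$ (the finite-dimensional case; the Banach case needs only that $\check\calD_x$ be complemented, which holds by the definition of a distribution).

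Second, I would pick finitely many spanning data for $\check\calD_x$: points $y_1,\ldots,y_k \in S_x$, diffeomorphisms $\psi_i \in \Psi_{y_i \to x}$ and vectors $v_i \in \calD_{y_i}$ such that the $T_{y_i}\psi_i(v_i)$ span $\check\calD_x$. Smoothness of $\calD$ lets me extend each $v_i$ to a local field $X_i \in \frakX(\calD)$ near $y_i$. Set $Y_i = (\psi_i)_* X_i$, defined near $x$. The key observation is that, although $Y_i$ need not belong to $\calD$, its flow satisfies $\mathrm{Fl}^{Y_i}_t = \psi_i \circ \mathrm{Fl}^{X_i}_t \circ \psi_i^{-1}$, a composition of elements of the pseudogroup generated by $\Psi$; hence $\mathrm{Fl}^{Y_i}_t$ preserves orbits.

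Third, I would build the chart. Choose a submanifold $N \ni x$ with $T_xM = \check\calD_x \oplus T_xN$ and define
\begin{equation*}
\Phi(t_1,\ldots,t_k,y) = \mathrm{Fl}^{Y_1}_{t_1} \circ \cdots \circ \mathrm{Fl}^{Y_k}_{t_k}(y).
\end{equation*}
At $(0,x)$, the image of $T\Phi$ is $\spann\{(Y_i)_x\} + T_xN = \check\calD_x + T_xN = T_xM$, so $\Phi$ is a local diffeomorphism onto a neighborhood $U$ of $x$, and $\phi = \Phi^{-1}$ is our chart. For each $y \in N \cap U$, the plaque $\Phi(V \times \{y\})$ is contained in a single orbit by the orbit-preservation of the $\mathrm{Fl}^{Y_i}_{t_i}$; consequently, for any orbit $S$ one has $\phi(U \cap S) = V \times A_S$ where $A_S = \{y \in \phi(N\cap U) \mid \Phi(V \times \{y\}) \subseteq S\}$, which is the required foliation chart structure. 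The leaves get their manifold structure from the plaques at varying base points of the orbit; that they are initial submanifolds is standard once the charts are constructed. To identify the tangent distribution, note that along a plaque through $x$ the tangent space at $\Phi(t,x)$ is spanned by the vectors $T\mathrm{Fl}^{Y_i}_{s}((Y_j)_{\cdot})$, which are images under pseudogroup elements of vectors in $\check\calD_x$, hence lie in $\check\calD_{\Phi(t,x)}$ by step one; conversely any element of $\check\calD_{\Phi(t,x)}$ pulls back via these flows into $\check\calD_x$, giving equality.

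I expect the main obstacle to be the bookkeeping showing that $\check\calD$ is really the tangent distribution everywhere (not just at $x$), since one must verify that pseudogroup-invariance propagates cleanly across the jumps in the rank of $\calD$ itself, and that the plaque-decomposition assembles globally into an initial-submanifold structure on each orbit. The construction above handles this because the rank of $\check\calD$ is constant along an orbit (step one) and the $Y_i$'s conjugate to fields in $\calD$, so the plaque dimension matches $\dim\check\calD$ at every point of the orbit; the initial-submanifold property then follows by taking the plaque topology generated by all such charts centered at points of $S_x$.
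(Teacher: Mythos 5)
This theorem is not proved in the dissertation: it is stated as \v{S}tefan's theorem and cited from~\cite{ste74}, so there is no internal proof to compare you against. What you have written is essentially the standard proof of the \v{S}tefan--Sussmann orbit theorem, i.e.\ \v{S}tefan's own strategy: establish $\Psi$-invariance of $\check{\calD}$ (hence constant rank along orbits), push finitely many local generators of $\calD$ forward by elements of $\Psi$ to get vector fields $Y_i$ whose flows preserve orbits and span $\check{\calD}_x$, and use the composed flow map $\Phi$ together with a transversal $N$ to manufacture a distinguished chart. The outline is correct in finite dimensions. The two places where the real work is compressed are (i) the verification that an integral curve of a local vector field in $\calD$ stays in a single plaque of each chart it meets --- this is what makes the orbit equal to the union of chained plaques and gives it a connected $k$-manifold structure; it follows because the curve is tangent to $\calD \subseteq \check{\calD}$ and, \emph{along the orbit $S_x$}, $\check{\calD}$ has dimension exactly $k$ and therefore coincides with the tangent spaces of the plaques --- and (ii) the initial-submanifold property, which your chart structure does deliver but which you only assert. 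Both are standard and I would accept them at this level of detail.

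There is one genuine gap relative to how the dissertation actually uses the theorem. The surrounding text works with manifolds modeled on separable Banach spaces, and your construction is intrinsically finite-dimensional: you choose \emph{finitely many} spanning data $(y_i,\psi_i,v_i)$ for $\check{\calD}_x$, which is impossible when $\check{\calD}_x$ is infinite-dimensional, and you assert that $\check{\calD}_x$ is complemented ``by the definition of a distribution.'' The definition only guarantees that each $\calD_y$ is complemented; $\check{\calD}_x$ is a (possibly infinite) algebraic sum of images $T_y\psi(\calD_y)$ and need not be closed, let alone complemented, so neither the transversal $N$ nor the finite flow composition $\Phi$ is available as stated. If you intend the statement only for finite-dimensional $M$ (which is the setting of~\cite{ste74}), say so explicitly; in the Banach setting additional hypotheses or a different argument are required.
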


We also define another partition of $M$ following~\cite{nubel}.
A \define{$\calD$-curve} is a curve $\gamma \colon I \to M$ from an open interval of $\RR$ such that $\dot{\gamma}(t) \in \calD_{\gamma(t)}$ for all $t \in I$ and that the pullback $\gamma^* \calD$ is a subbundle of $\gamma^* TM$.
We define the equivalence relation: being connected by a finite number of $\calD$-curves.
The equivalence class of $x$ is denoted by $N_x$ and is called the \define{reachable set}, or N\"{u}bel-accessible set, of $x$.

\begin{rmk}
If we remove for $\calD$-curves the condition involving pullbacks (which in the case of pointwise finite rank is equivalent to constant rank along $\gamma$), then we obtain a partition obviously coarser than that by the orbits of $\calD$, which need not form a foliation even in the simplest cases, as the distribution defined by the vector field in the plane $X \colon (x,y) \mapsto (x, -y)$ shows.
Also, the distribution on $\RR$ given by $\calD_x = \spann(x e_1)$ yields the trivial equivalence relation.
\end{rmk}

Here is an example from~\cite{nubel} that shows that the N\"{u}bel-accessible sets can differ from the orbits, and contrary to those, need not be immersed submanifolds (even less form a foliation): consider the foliation of the plane $(x,y)$ given by the whole tangent spaces for $x>0$ and the ``horizontal'' subspaces for $x \leqslant 0$. Then the N\"{u}bel-accessible sets are the open half-plane $\{x>0\}$ and the horizontal closed half-lines for $x \leqslant 0$.

\section{The foliation induced by an algebra}

The \v{S}tefan-accessible sets of the distribution induced by an algebra are \textit{a priori} different from the accessible sets defined at the beginning of this chapter. We will see that they are actually equal, and therefore are the leaves of a foliation of the base manifold.

First we mention the following characterization of tame algebras.

\begin{prop}
If an algebra is tame, then the rank of its associated distribution is constant on its accessible sets.
The converse holds for pointwise finite rank.
\end{prop}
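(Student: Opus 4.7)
The plan rests on the equality $\im {h_x}'(v) = \calD_{h(x,v)}$, valid for $v$ in a neighborhood of the zero section (the ``equality for small $v$'' in~\eqref{eq:incl-im}), combined with the algebra axiom to propagate rank equalities along the $1$-parameter paths $t \mapsto h(x,tv)$ that sweep out accessible sets.

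For the forward direction, fix $y = h(x,v) \in L_x$ and set $\gamma(t) = h(x,tv)$ for $t \in [0,1]$. Applying the second algebra axiom~\eqref{eq:algaxb} at $(x, t_0 v)$ with $(\dot{x},\dot{v}) = (sv, 0)$ yields
\[
\gamma(t_0+s) = h\bigl(\gamma(t_0),\, s\, h'(x, t_0 v)(v,0)\bigr) = h_{\gamma(t_0)}(s w_0),
\]
where $w_0 = h'(x, t_0 v)(v,0) \in T_{\gamma(t_0)}M$. For $s$ small, $sw_0$ sits in the neighborhood of the zero section of $T_{\gamma(t_0)}M$ where $h$ is a submersion, and tameness at $\gamma(t_0)$ gives $h_{\gamma(t_0)}$ locally constant finite rank near the origin. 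Combining these with the equality, one obtains $\mathrm{rank}(A_{\gamma(t_0+s)}) = \mathrm{rank}({h_{\gamma(t_0)}}'(s w_0)) = \mathrm{rank}(A_{\gamma(t_0)})$ for $|s|$ small. Hence $t \mapsto \mathrm{rank}(A_{\gamma(t)})$ is locally constant on the connected interval $[0,1]$, so is constant, giving $\mathrm{rank}(A_y) = \mathrm{rank}(A_x)$.

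For the converse, assume pointwise finite rank and constancy of rank on accessible sets. For $v$ near $0 \in T_xM$, the equality yields $\mathrm{rank}({h_x}'(v)) = \mathrm{rank}(A_{h(x,v)})$, and since $h(x,v) \in L_x$ the hypothesis forces this common value to equal $\mathrm{rank}(A_x)$, a fixed finite integer. Therefore $h_x$ has locally constant finite rank near $0$, hence is a subimmersion at the origin, so $h$ is tame at $x$; as $x$ was arbitrary, $h$ is tame.

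The main obstacle is the globalization in the forward direction: the equality we rely on is only guaranteed for $v$ small, so a single application controls rank only near the basepoint. The key maneuver is to use the second algebra axiom to re-express $\gamma(t_0+s)$ as $h_{\gamma(t_0)}(\cdot)$ evaluated close to the origin, thereby transferring the rank comparison to the new basepoint $\gamma(t_0)$ where tameness again applies; connectedness of $[0,1]$ then finishes the argument.
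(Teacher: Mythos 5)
Your proof is correct and follows essentially the same route as the paper's: both directions hinge on the case of equality in~\eqref{eq:incl-im} near the zero section, with the forward direction showing that $t \mapsto \dim \calD_{h(x,tv)}$ is locally constant (you usefully spell out the re-basing via the second algebra axiom that the paper leaves implicit) and the converse using that a map of locally constant finite rank is a subimmersion.
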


\begin{proof}
If $x$ and $y$ are in the same accessible set, we can write $y=h(x,v)$. Then the function $[0,1] \to \NN, t \mapsto \dim \calD_{h(x,tv)}$ is locally constant by~\eqref{eq:incl-im} (case of equality), hence constant.
The converse also follows from~\eqref{eq:incl-im} (case of equality) and the fact that a map of locally constant finite rank is a subimmersion.
\end{proof}

To apply the results of the previous section, we need the distributions we consider to be smooth. This is the object of the following proposition.

\begin{prop}
\label{prop:smooth}
The distribution induced by an algebra is smooth as soon as the algebra is tame or the ``distribution'' is a genuine distribution on a Hilbert manifold, or it has pointwise finite rank.
\end{prop}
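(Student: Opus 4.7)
The plan is to exhibit, for each basepoint $x_0 \in M$, a local smooth vector subbundle of $TM$ which is contained in $\calD$ and which agrees with $\calD_{x_0}$ at $x_0$. The main smoothness input is that $(x,w) \mapsto A_x w$ is smooth, as it is the restriction of $h'$ along the zero section to the vertical subspaces. I would separate the argument into two cases corresponding exactly to the two ways the hypothesis can be met: a finite-dimensional fiber at $x_0$ (which covers both the tame case and the pointwise finite rank case, since tameness at $x_0$ makes $h_{x_0}$ a subimmersion at $0$ and hence forces $A_{x_0}$ to have finite rank), and an infinite-dimensional but closed-and-complemented fiber (the Hilbert case).

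In the finite-rank case, write $r = \dim \calD_{x_0}$ and pick a basis $v_1,\dots,v_r$ of $\calD_{x_0} = \im A_{x_0}$. Choose preimages $\tilde{v}_i \in T_{x_0}M$ with $A_{x_0}\tilde{v}_i = v_i$, extend each $\tilde{v}_i$ to a smooth local section $\tilde{V}_i$ of $TM$ in a neighborhood $U$ of $x_0$, and push forward: $V_i(x) := A_x \tilde{V}_i(x)$. These are smooth local sections of $\calD$ (by~\eqref{eq:incl-im} applied with $v=0$, or directly since $\calD_x = \im A_x$) which form a basis of $\calD_{x_0}$ at $x_0$; by continuity of the determinant of their components in any local trivialization they remain linearly independent on a smaller neighborhood, where they span a rank-$r$ subbundle $B \subseteq \calD$ with $B_{x_0} = \calD_{x_0}$.

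In the Hilbert case, the genuine distribution hypothesis guarantees $\calD_{x_0}$ is closed and complemented, and on a Hilbert manifold $\ker A_{x_0}$ is likewise closed and complemented. Pick a closed topological complement $F_0$ of $\ker A_{x_0}$; by the open mapping theorem $A_{x_0}|_{F_0} : F_0 \to \calD_{x_0}$ is a topological isomorphism, in particular bounded below. Extend $F_0$ to a smooth subbundle $F$ of $TU$ over a neighborhood $U$ of $x_0$ by trivializing $TM$ locally and transporting $F_0$ as a constant subspace. The smooth bundle morphism $\phi \colon F \to TU$, $\phi_x = A_x|_{F_x}$, is bounded below at $x_0$, and being bounded below is an open condition for continuous linear maps between Banach spaces, so $\phi_x$ remains an injection onto a closed subspace for $x$ in a smaller neighborhood. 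Then $B := \phi(F)$ inherits a subbundle structure from $F$ via $\phi$, sits inside $\calD$, and coincides with $\calD_{x_0}$ at $x_0$.

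The only real obstacle is the Hilbert case, where one must verify that the push-forward of a subbundle by a fiberwise continuous linear map remains a subbundle; this is exactly the point where the Banach-space fact that bounded-below operators form an open set, together with closedness of the image, is needed. The finite-rank case is essentially a lower-semicontinuity argument. Neither case uses anything beyond smoothness of $A$ in $x$ and the identity $\calD_x = \im A_x$.
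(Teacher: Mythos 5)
Your two constructions are individually sound, and together they closely track the paper's own proof: the finite-rank argument is the paper's ``pointwise finite rank'' case (push forward local extensions of preimages by $A$), upgraded to produce an explicit local supporting subbundle, and the Hilbert argument is the paper's appeal to the observation of N\"{u}bel, which you reprove by hand via openness of the set of bounded-below operators. The genuine gap is in your case division: tameness does \emph{not} force $A_{x_0}$ to have finite rank. Tameness at $x_0$ means $h_{x_0}$ is a subimmersion at the origin, i.e.\ it factors locally as an immersion after a submersion; in the Banach setting this yields that $\ker A_{x_0}$ and $\im A_{x_0}$ split, but says nothing about their dimensions. For instance, the free algebra $\mu_M$ on an infinite-dimensional Banach manifold is regular, hence tame, and has infinite rank. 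So a tame algebra of infinite rank on a non-Hilbert Banach manifold falls through both of your cases: the first requires finite rank, and the second invokes the Hilbert structure twice --- once to complement $\ker A_{x_0}$, and once, implicitly, to know that the closed images $\phi_x(F_x)$ are complemented, which is what being a subbundle of $TM$ (and membership in a distribution in this paper's sense) demands.

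The repair is short and keeps your second argument intact: in the tame case, take the splittings $T_{x_0}M = \ker A_{x_0} \oplus F_0$ and of $\im A_{x_0}$ directly from the subimmersion factorization rather than from an inner product, and replace ``bounded below, hence injective with closed image'' by the standard Banach bundle lemma (this is exactly the N\"{u}bel observation the paper cites): a vector bundle morphism over the identity that is injective with split image at one point restricts, on a neighborhood, to an isomorphism onto a subbundle. With that substitution your construction of $B = \phi(F)$ covers the tame case verbatim, and the three hypotheses of the proposition are all handled.
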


\begin{proof}
Let $h$ be an algebra on $M$ with induced distribution $\calD$. In the case of pointwise finite rank, we can proceed as follows. Let $x \in M$ and let $v \in \calD_x$. Then there is a $w \in T_xM$ with $A_x(w) = v$. Extend $w$ to a vector field $W$. Then the vector field $V \colon y \mapsto A_y(W_y)$ extends $v$ and belongs to $\calD$.

For the general case, if $x \in M$, then our hypotheses (either tameness or genuine distribution on a Hilbert manifold), ensure that both the kernel and the image of $A_x$ split, so we can write $A_x \colon \ker A_x \oplus B_x \to \calD_x \oplus C_x$. Then, from~\cite[Observation p.~3]{nubel}, there is a neighborhood of $x$ and a subbundle $B$ extending $B_x$, such that $A$ is an isomorphism of vector bundles from $B$ onto the subbundle $A(B)$, which locally supports $\calD$.
\end{proof}

We will need the following lemma (very similar to the above proposition) in the proof of the main theorem.

\begin{lem}
The algebra-accessible sets of a tame algebra form a partition finer than the N\"{u}bel partition.
\end{lem}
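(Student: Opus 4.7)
The plan is to construct, for any pair of points $x$ and $y$ in the same algebra-accessible set, a single $\calD$-curve joining them, which will immediately imply that the algebra partition refines the N\"{u}bel partition. Given $y \in L_x$, write $y = h(x,v)$ for some $v \in T_xM$ and define the candidate curve
\begin{equation*}
\gamma \colon (-\epsilon, 1+\epsilon) \to M, \qquad \gamma(t) = h(x,tv),
\end{equation*}
which satisfies $\gamma(0) = x$ and $\gamma(1) = y$ by the first algebra axiom.

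The tangency condition is immediate from the inclusion~\eqref{eq:incl-im}: differentiating $t \mapsto h(x,tv)$ gives $\dot{\gamma}(t) = {h_x}'(tv) \cdot v$, which lies in $\im {h_x}'(tv) \subseteq \calD_{h(x,tv)} = \calD_{\gamma(t)}$. So $\dot{\gamma}(t) \in \calD_{\gamma(t)}$ for all $t$ in the domain, as required.

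It remains to verify that $\gamma^*\calD$ is a subbundle of $\gamma^*TM$. Since the algebra is tame, the preceding proposition gives that $\dim \calD_{\gamma(t)}$ is constant along $\gamma$ (which lies entirely in the single accessible set $L_x$). In the pointwise finite rank setting this already suffices, as locally constant finite rank of a smooth assignment of subspaces is equivalent to being a subbundle. For the general Banach case, I would invoke tameness at each $\gamma(t_0)$ together with the local construction from the proof of Proposition~\ref{prop:smooth}: the subimmersion property of $h_{\gamma(t_0)}$ at the origin provides a splitting $T_{\gamma(t_0)}M = \ker A_{\gamma(t_0)} \oplus B_{\gamma(t_0)}$ on which $A$ restricts to an isomorphism onto $\calD_{\gamma(t_0)}$, and this splitting persists in a tubular neighborhood of $\gamma(t_0)$ in $M$, hence pulled back along $\gamma$ it gives a local trivialization of $\gamma^*\calD$.

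The main obstacle is precisely this last point: guaranteeing the subbundle condition in the infinite-dimensional setting. The tangency condition and the endpoint conditions are essentially formal consequences of the algebra axioms, but the subbundle structure of $\gamma^*\calD$ depends on local splittings coming from tameness, and one must check that these splittings assemble coherently along the whole curve $\gamma$, using the invariance of the rank on $L_x$ to patch them. Once this is done, $\gamma$ is a bona fide $\calD$-curve realizing $x \sim y$ in the N\"{u}bel relation, which finishes the proof.
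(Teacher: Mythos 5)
Your proposal is correct and follows essentially the same route as the paper: the same curve $t \mapsto h_x(tv)$, the same use of the inclusion~\eqref{eq:incl-im} for the tangency condition, and the same appeal to tameness (the subimmersion property of $h_{\gamma(t)}$ at the origin, via the equality case of~\eqref{eq:incl-im}) to get that $\gamma^*\calD$ is a subbundle of $\gamma^*TM$. Your extra care about assembling the local splittings along the curve is a reasonable elaboration of what the paper leaves implicit, not a different argument.
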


\begin{proof}
Given a tame algebra $h$, we have to prove that the $h$-accessible sets are included in the N\"{u}bel-accessible sets.
Let $x \in M$ and $y \in L_x$.
Then there is a $v \in T_xM$ such that $y = h(x,v)$ and we define the curve $\gamma \colon \RR \to M, t \mapsto h_x(tv)$.
Then $\gamma'(t) = {h_x}'(tv)v \in \im {h_x}'(tv) \subseteq \calD_{h_x(tv)}$ and $\gamma(0) = x$ and $\gamma(1) = y$.

If $t \in \RR$, then $h_{\gamma(t)}$ is a subimmersion in a neighborhood of the origin, so by~\eqref{eq:incl-im} (case of equality), $\gamma^* \calD$ is a subbundle of $\gamma^* TM$.
Therefore $\gamma$ is a $\calD$-curve connecting $x$ and $y$.
\end{proof}

From now on, manifolds are assumed to be modeled on separable Banach spaces.
We can now prove the main result of this chapter.

\begin{thm}\label{thm:folia}
The accessible sets of a tame algebra are the leaves of a foliation with tangent distribution the distribution induced by that algebra, which is therefore integrable.
\end{thm}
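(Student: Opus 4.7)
The plan is to derive the foliation from \v{S}tefan's theorem applied to the distribution $\calD$ induced by $h$. Since $h$ is tame, Proposition~\ref{prop:smooth} guarantees that $\calD$ is smooth, so \v{S}tefan's theorem produces a foliation of $M$ whose leaves are the \v{S}tefan orbits $S_x$ and whose tangent distribution $\check\calD$ contains $\calD$. The heart of the argument is to identify each algebra-accessible set $L_x$ with the corresponding \v{S}tefan orbit $S_x$ and to verify that $T_x L_x = \calD_x$ at each point.

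For $L_x \subseteq S_x$: given $y = h(x,v) \in L_x$, the lemma preceding the theorem already realizes $\gamma(t) = h(x,tv)$ as a $\calD$-curve from $x$ to $y$. Tameness and the equality case of~\eqref{eq:incl-im} make the rank of $\calD$ constant along $\gamma$, so $\gamma^* \calD$ is a subbundle of $\gamma^* TM$, and a standard flow-box argument decomposes $[0,1]$ into finitely many subintervals on each of which $\gamma$ coincides with a flow segment of a local vector field in $\calD$, whence $y \in S_x$.

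For the reverse inclusion I would construct, at each $y \in L_x$, a local integral submanifold of $\calD$ contained in $L_x$. By tameness the rank of $\calD$ is constant on $L_x$, so $\dim \calD_y = \dim \calD_x$, and $h_y$ is a subimmersion at $0$ of rank $\dim \calD_x$. Its image on a small neighborhood of $0$ in $T_yM$ is then a submanifold of $M$ of dimension $\dim \calD_x$ through $y$, tangent at $y$ to $\im A_y = \calD_y$ and contained in $L_y = L_x$. This exhibits $L_x$ locally as a smooth submanifold of dimension $\dim \calD_x$ tangent to $\calD$, and in particular $L_x$ is open in the \v{S}tefan leaf $S_x$. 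Applying the same argument at every $z \in S_x$ shows that each $L_z$ is open in $S_x$; since the $L_z$'s for $z \in S_x$ partition $S_x$, the set $L_x$ must also be closed in $S_x$, and by connectedness of $S_x$ one concludes $L_x = S_x$.

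The identification $L_x = S_x$ gives $T_xL_x = \check\calD_x$, and the local parameterization above shows $\dim T_xL_x = \dim \calD_x$; combined with the \v{S}tefan-given inclusion $\calD_x \subseteq \check\calD_x$, this yields $\check\calD = \calD$, proving both the foliation statement and the integrability of $\calD$. The main technical obstacle is verifying that the local parameterizations by $h_y$ at different base points $y \in L_x$ patch together into a coherent weakly embedded submanifold structure on $L_x$; this is the place where the algebra axiom~\eqref{eq:algaxb}, through the identity $h(h(x,v), h'(x,v)(\dot x, \dot v)) = h(x, v + \dot x)$, becomes essential, as it allows one to express $h_{h(x,v)}$ in terms of $h_x$ and thereby to check that the local pieces agree on overlaps.
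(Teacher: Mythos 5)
Your overall strategy---identify each accessible set $L_x$ with the \v{S}tefan orbit $S_x$ and then deduce $\check{\calD}=\calD$---is the right shape, and the inclusion $L_x \subseteq S_x$ can indeed be extracted from the lemma on $\calD$-curves (modulo some care at parameters where $\frac{d}{dt}h(x,tv)$ vanishes). The genuine gap is in the reverse inclusion $S_x \subseteq L_x$, and it occurs at exactly the hard point of the theorem. You claim that because $h_y(U_y)$ is an immersed submanifold of dimension $\dim\calD_y$ contained in $L_x$ and tangent to $\calD_y$, the set $L_x$ is \emph{open} in $S_x$. But $S_x$ has dimension $\dim\check{\calD}_y$ at $y$, so a $\dim\calD_y$-dimensional piece can only be open in it if $\dim\calD_y=\dim\check{\calD}_y$ --- which is precisely the integrability statement $\check{\calD}=\calD$ that you are trying to prove. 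The open--closed--connectedness argument is therefore circular. (A secondary problem: $L_x$ near $y$ need not coincide with the single piece $h_y(U_y)$, since points $h_y(w)$ for large $w$ may accumulate at $y$, as happens for a dense leaf on a torus; so the assertion that $L_x$ is locally a submanifold, and hence that $T_xL_x=\calD_x$, is also not established by the local parameterization alone.)

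The paper closes this gap with two separate arguments that you would need substitutes for. First, $S_x\subseteq L_x$ is proved directly: for a local vector field $X$ in $\calD$, tameness gives a splitting $T_xM=\ker h_x'(u)\oplus F$ near $u=0$ and a smooth inverse $g(u)$ of the isomorphism $F\isomto\calD_{h_x(u)}$ induced by $h_x'(u)$; solving the Cauchy problem $v'=g(v)\bigl(X_{h_x(v)}\bigr)$, $v(0)=0$, lifts the integral curve of $X$ through $x$ into $T_xM$, so that curve stays in $L_x$. Second, $\check{\calD}=\calD$ is obtained by a countability argument: since $T_{x_0}M$ is Lindel\"{o}f, $L_{x_0}$ is a countable union of immersed submanifolds modeled on $\calD_{x_0}$, and since $S_{x_0}\subseteq L_{x_0}$, the tangent space $\check{\calD}_{x_0}=T_{x_0}S_{x_0}$ cannot properly contain $\calD_{x_0}$; only then does N\"{u}bel's theorem identify the N\"{u}bel and \v{S}tefan partitions and complete the proof. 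If you wish to keep your open--closed scheme, you must first supply an independent proof of one of these two facts.
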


\begin{proof}
Let $h$ be a tame algebra on the manifold $M$ and let $\calD$ be its induced distribution, which is smooth by Proposition~\ref{prop:smooth}.
We are going to prove that the \v{S}tefan partition is finer than the partition into $h$-accessible sets.
Then we will prove that $\calD$ is integrable.
This will imply that the N\"{u}bel and \v{S}tefan partitions agree (by~\cite[Theorem~4.ii]{nubel}), and by the lemma that they also agree with the partition into $h$-accessible sets, which is therefore a foliation.

Let $x \in M$.
The set $S_x$ is the set of points obtained from $x$ by following a finite number of integral curves of local vector fields in $\calD$.
We want to show that $S_x \subseteq L_x$.
Since the $L_y$'s form a partition, it is enough to show that the set of points obtained from $x$ by following one integral curve of a local vector field in $\calD$ is included in $L_x$ (we use transitivity: if $y \in L_x$ and $z \in L_y$ then $z \in L_x$). Similarly, given such a curve $\gamma$ with $\gamma(0)=x$, it is enough to show that there is a $\delta>0$ with $\gamma(-\delta,\delta) \subseteq L_x$: indeed, if $\gamma$ has a limit at $\delta$, say $y$, then $\gamma$ is defined on an open neighborhood of $\delta$ and we apply the same argument at $\gamma$ around $y$ and obtain that $L_x \cap L_y \neq \varnothing$ hence $L_x = L_y$.

Let $X \in \Gamma_U(\calD), U \subseteq M$ be a non-vanishing local vector field in $\calD$. Fix $x \in U$ and write $f = h_x$.
Since $h$ is tame at $x$, that is, $f$ is a subimmersion at 0, there is a closed subspace $F \subseteq T_xM$ and an open neighborhood $U_x$ of 0 in $T_xM$ such that for $u \in U_x$, one has $T_xM = \ker f'(u) \oplus F$, so that $f'(u)$ induces an isomorphism $F \isomto \calD_{f(u)}$ whose inverse we denote by $g(u)$. Then the function $k \colon U_x \to F$ defined by
\[
k(u) = g(u)(X_{f(u)})
\]
is smooth. Therefore, the Cauchy problem $v' = k(v), v(0)=0$ has a local solution. For this solution,
\[
\frac{d}{dt} f(v(t)) = f'(v(t))(v'(t)) = f'(v(t)) \big( k(v(t)) \big) = X_{f(v(t))}
\]
so $f \circ v$ is (near $x$) the integral curve of $X$ through $x$, which is therefore included (near $x$) in $L_x$. 

Now we prove that $\check{\calD} = \calD$. By tameness, for any $x \in M$, there is an open neighborhood $U_x \subseteq T_xM$ of 0 where $h_x$ is a subimmersion, so $h_x(U_x)$ is an immersed submanifold of $M$.
Fix $x_0 \in M$ and let $v \in T_{x_0}M$.
Then by the second algebra axiom, there is an open neighborhood $V_v \subseteq T_{x_0}M$ of $v$ such that $h_{x_0}(V_v) \subseteq h_{h(x_0,v)} (U_{h(x_0,v)})$.
The open subsets $V_v$ cover $T_{x_0}M$, which is Lindel\"{o}f (this is equivalent to separable for metrizable spaces), so we can extract a countable cover, and write $L_{x_0} = \bigcup_{i \in \NN} h_{h(x_0,v_i)}(U_{h(x_0,v_i)})$.
Therefore $L_{x_0}$ is a countable union of immersed submanifolds, all modeled on $\calD_{x_0}$ (actually on some $\calD_{h(x_0,v_i)}$ but we saw in the proof of the lemma that they are isomorphic).
In particular, the tangent space of $S_{x_0} \subseteq L_{x_0}$ at $x_0$, which by definition is $\check{\calD}_{x_0}$, is included in a countable union of spaces isomorphic to $\calD_{x_0}$ so cannot be a proper superset of $\calD_{x_0}$, so $\check{\calD} = \calD$.

Since $\calD$ is integrable, \cite[Theorem~4.ii]{nubel} implies that the N\"{u}bel-accessible sets are its (maximal connected) integral manifolds, so agree with the \v{S}tefan-accessible sets.
Therefore by the lemma, both agree with the partition into $h$-accessible sets, which is therefore a foliation with tangent distribution $\calD$.
\end{proof}

If we do not require tameness, it is not true in general that one can find such a continuous function $k$. For instance, take $\lambda_u = \begin{pmatrix}1&0\\u&u^2\end{pmatrix}$ for $u \in \KK$. Then if $u \neq 0$, ${\lambda_u}^{-1}\begin{pmatrix}1\\0\end{pmatrix} = \begin{pmatrix}1\\-\frac1u\end{pmatrix}$ which is unbounded when $u \to 0$; however, $\begin{pmatrix}1\\0\end{pmatrix} \in \im \lambda_0$.

\section{Holonomy of a monadic foliation}
\label{sec:holo}

A foliation which arises as the foliation induced by a tame algebra is called \define{monadic}. Being monadic is obviously a differential invariant, but we will see in the next chapter that it is not a topological invariant. By the nilpotency property of the associated endomorphism of an algebra, a monadic foliation has rank at most half the dimension of the manifold. In this section, we are going to prove another restriction on monadic foliations, regarding their holonomy. For the concept of holonomy of a regular foliation, see for instance~\cite{moer}.

First we need a result reminiscent of control theory, about lifting paths (from the base manifold to the ``parameter'' space).

\begin{prop}[lifting of paths]
\label{pathLift}

Let $h \colon TM \to M$ be a tame algebra, $x \in M$, and $\gamma \colon [0,1] \to M$ a smooth path in $L_x$ with $\gamma(0)=x$. Then there is a piecewise smooth path $\tilde{\gamma} \colon [0,1] \to T_xM$ such that $\tilde{\gamma}(0)=0$ and $h(x,\tilde{\gamma}(t)) = \gamma(t)$ for $t \in [0,1]$.
\end{prop}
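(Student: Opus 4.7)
The plan is to realise $\tilde\gamma$ as the piecewise smooth solution of an ODE on $T_xM$, generalising to an arbitrary smooth curve in $L_x$ the argument used in the last paragraph of the proof of Theorem~\ref{thm:folia} (which lifts an integral curve of a single vector field $X\in\calD$).

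Differentiating the desired relation $h(x,\tilde\gamma(t))=\gamma(t)$ in $t$ yields
\begin{equation*}
{h_x}'(\tilde\gamma(t))\,\tilde\gamma'(t)=\dot\gamma(t),\qquad \tilde\gamma(0)=0;
\end{equation*}
conversely, any $\tilde\gamma$ starting at $0$ solving this ODE automatically satisfies the lifting relation, by integration and uniqueness applied to $\phi(t):=h_x(\tilde\gamma(t))$. The task therefore reduces to constructing, locally in $t$, a smooth selector $k(t,v)\in T_xM$ of preimages of $\dot\gamma(t)$ under ${h_x}'(v)$, then solving $\tilde\gamma'=k(t,\tilde\gamma)$, $\tilde\gamma(0)=0$. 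To produce $k$, first note that $\dot\gamma(t)\in\calD_{\gamma(t)}$ by Theorem~\ref{thm:folia}, since $\gamma$ lies in the leaf $L_x$ with tangent distribution $\calD$. Then use tameness: each $h_y$ is a subimmersion of rank $\dim\calD_y=\dim\calD_x$ at $0\in T_yM$, so has smooth local right inverses defined on $\calD_y$ near $y$. Combined with the factorization
\begin{equation*}
h(x,v+\xi)=h\bigl(h(x,v),\,\partial_1 h(x,v)\,\xi\bigr)
\end{equation*}
obtained from the second algebra axiom with $\dot v=0$, these right inverses transport to right inverses of ${h_x}'(v)$ on $\calD_{h(x,v)}$ near any $v$ with $h(x,v)=y$, yielding the desired $k$.

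The global lift is then assembled as follows. Since $L_x$ is an initial submanifold (Theorem~\ref{thm:folia}), $\gamma$ is smooth as a map into $L_x$ and $\gamma([0,1])$ is compact in the leaf topology; a finite subdivision $0=s_0<\cdots<s_n=1$ then covers $[0,1]$ with pieces on each of which $k$ can be constructed locally in $T_xM$. Solving the ODE on each sub-interval with initial condition the endpoint of the previous piece and concatenating produces a continuous path $\tilde\gamma$ which is smooth on each sub-interval, hence piecewise smooth overall. The ``piecewise'' feature arises because the selectors on successive pieces need not agree as derivatives at the breakpoints.

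The hard part will be producing the smooth right inverses for $v$ not near $0$, that is, guaranteeing $\im{h_x}'(v)=\calD_{h(x,v)}$ along the growing lift. Differentiating the factorization above at $\xi=0$ gives ${h_x}'(v)=A_{h(x,v)}\circ\partial_1 h(x,v)$; near $v=0$, the left-hand side has rank $\dim\calD_x$ because $\partial_1 h(x,0)=\id$, and this maximum is preserved in a neighborhood of $0$ by lower semicontinuity of rank combined with the tame upper bound $\dim\im{h_x}'(v)\le\dim\calD_{h(x,v)}=\dim\calD_x$. Propagating maximality all the way along the lift --- equivalently, ruling out a drop of $\partial_1 h(x,v)$ modulo $\ker A_{h(x,v)}$ --- is the main technical obstacle, and is precisely where tameness is used essentially rather than mere smoothness of $\calD$.
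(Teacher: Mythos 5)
Your plan reduces the lift to the ODE ${h_x}'(\tilde\gamma(t))\,\tilde\gamma'(t)=\dot\gamma(t)$, which forces you to produce, at every point $v$ reached by the (a priori far from the zero section) lift, a smooth right inverse of the fiberwise derivative ${h_x}'(v)$ defined on $\calD_{h(x,v)}$ --- in particular you need $\im {h_x}'(v)=\calD_{h(x,v)}$ for all such $v$. You correctly identify this as the main obstacle, and you do not resolve it: the inclusion~\eqref{eq:incl-im} is only established as an equality for $(x,v)$ in the neighborhood of the zero section where $h$ is a submersion, and tameness only gives subimmersivity of each $h_y$ at the origin of $T_yM$. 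So as written the argument has a genuine gap at its central step (and a secondary one: even where surjectivity onto $\calD$ holds, you still need local constancy of rank of ${h_x}'$ near $\tilde\gamma(t)$ to get a \emph{smooth} selector $k$ --- compare the $\lambda_u$ example given right after the proof of Theorem~\ref{thm:folia}, which shows that pointwise solvability does not yield a continuous choice of preimages).

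The paper's proof is organized precisely so as never to need surjectivity of the fiberwise derivative away from the zero section. It chops $[0,1]$ into finitely many subintervals (via a lower semicontinuous ``radius'' function and compactness), lifts the first piece by composing $\gamma$ with a local section of $h_x$ near $0\in T_xM$ (where tameness does give a submersion onto the leaf), obtains by induction a lift $\gamma_1$ of the remaining portion in $T_{x_1}M$ with $x_1=\gamma(\tfrac1n)$, and transports $\gamma_1$ back to $T_xM$ using the second algebra axiom in the form $h(x,v_0+T\tau_M\xi)=h\bigl(h(x,v_0),h'(x,v_0)\xi\bigr)$, choosing a smooth $\xi(t)$ with $h'(x,v_0)\xi(t)=\gamma_1(t)$. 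The map inverted there is the \emph{full} derivative $h'(x,v_0)\colon T_{(x,v_0)}TM\to T_{x_1}M$, which is surjective because $v_0$ lies in the image of a local section staying close to the zero section; no claim about $\im{h_x}'(v)$ for large $v$ is ever used. To salvage your ODE approach you would have to prove that $\im{h_x}'(v)=\calD_{h(x,v)}$, with locally constant rank, propagates along the lift --- an additional lemma that the statement does not obviously supply; otherwise the argument should be restructured along the paper's lines.
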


Of course, such a lifting is not unique.

\begin{proof}

First, we define a function $f \colon [0,1] \to \RR$ as follows.
If $t \in [0,1]$, let $f(t)$ be half the supremum of the $\nu \in [0,1]$ such that $h_{\gamma(t)}$ has a local section sending $\gamma(t)$ to 0 and defined on an open set containing $\gamma([t,t+\nu])$, and such that for any $v$ in the image of this local section, $h'(\gamma(t),v)$ is surjective.
The function $f$ is well-defined and positive since each $h_{\gamma(t)}$ corestricted to the initial submanifold $L_x$ is submersive at $0$ (because $h$ is tame), hence locally a submersion onto $L_x$, and $h$ is a submersion along the zero section.
It is lower semicontinuous, so has a positive infimum $\eta$ on the compact set $[0,1]$. Set $n = \lceil \frac1\eta \rceil$ and $x_1 = \gamma(\frac1n)$.

Let $s$ be a local section of $h_x$ sending $x$ to 0 and defined on a neighborhood of $\gamma([0,\frac1n])$. We write $\gamma_0 = s \circ \gamma$ and $v_0 = \gamma_0(\frac1n)$. Therefore $h(x,\gamma_0(t)) = \gamma(t)$. So if $n=1$, the result is proved. Let us suppose it is proved for a certain $n$, and prove it for $n+1$.

By induction hypothesis, there is a path $\gamma_1$ in $T_{x_1}M$ such that $\gamma_1(0)=0$ and $h(x_1,\gamma_1(t)) = \gamma(\frac1n + \frac{n-1}{n}t)$. By definition of $f$, the fixed linear map $h'(x,v_0)$ is surjective, so there is a smooth path $\xi$ in $T_{(x,v_0)}TM$ (for instance with values in a fixed complement of $\ker h'(x,v_0)$) such that $h'(x,v_0)(\xi(t)) = \gamma_1(t)$ for $t \in [0,1]$, so
\[
h(x_1,\gamma_1(t)) = h \big( h(x,v_0),\gamma_1(t) \big) = h(x, v_0 + (T\tau_M \xi)(t)).
\]
Concatenating (and reparametrizing) the paths $\gamma_0$ and $t \mapsto v_0 + (T\tau \xi)(t)$ gives a path $\tilde{\gamma}$ satisfying the conclusion of the proposition.
\end{proof}

Let $h$ be a regular algebra on $M$ and $x \in M$. Let $N$ be a local transversal of the foliation at $x$ and let $Y \in \frakX(N)$. Define $f \colon N \to M$ by $f(y) = h(y,Y_y)$. Then $f$ is an immersion (if $N$ is small enough), since the rank of its differential is at least (and then exactly) the codimension of the foliation. Therefore if $N$ is small enough, $f(N)$ is a local transversal at $h(x,Y_x)$ and corresponds to ``sliding along the leaves'' of the foliation along the path $t \mapsto h(x,t Y_x)$, so it is an element of the holonomy groupoid of the foliation. Conversely, the lifting property of paths shows that any element in the holonomy groupoid is of this form.

\begin{thm}
A map in the linear holonomy of a regular monadic foliation has 1 as an eigenvalue.
\end{thm}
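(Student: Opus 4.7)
The plan is to exploit the path lifting proposition stated earlier in this section. Given a smooth loop $\gamma : [0,1] \to L_x$ based at $x$ that represents the given element of the linear holonomy, Proposition~\ref{pathLift} provides a (piecewise smooth) lift $\tilde\gamma : [0,1] \to T_xM$ with $\tilde\gamma(0) = 0$ and endpoint $v := \tilde\gamma(1) \in T_xM$ satisfying $h(x,v) = x$. Following the discussion preceding the theorem, the linear holonomy along $\gamma$ is represented by $\Phi = \pi \circ F|_{T_xN}$, where $F := T_x f$ for $f(y) = h(y,v)$ in a chart at $x$, and $\pi : T_xM \twoheadrightarrow T_xN$ is the linear projection along $\calD_x = T_xL_x$.

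The key algebraic input comes from the second algebra axiom at the point $(x,v)$. Specializing the axiom with $(\dot x, \dot v) = (w, 0)$ and using $h(x,v) = x$ gives the identity $h(x, F(w)) = h(x, v+w)$ for all $w \in T_xM$; differentiating at $w = 0$ yields
\[
A_x \circ F = h_x'(v) \qquad \text{on } T_xM.
\]
In the regular maximal-rank case (where $\ker A_x = \im A_x = \calD_x$), both $A_x|_{T_xN}$ and $h_x'(v)|_{T_xN}$ are isomorphisms $T_xN \isomto \calD_x$, so $\Phi$ factors as $(A_x|_{T_xN})^{-1} \circ h_x'(v)|_{T_xN}$.

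To produce an eigenvector of $\Phi$ with eigenvalue $1$, I would differentiate the lifting identity $h(x, \tilde\gamma(t)) = \gamma(t)$ at the endpoints, obtaining $A_x \tilde\gamma'(0) = \gamma'(0)$ and $h_x'(v) \tilde\gamma'(1) = \gamma'(1)$. For a smoothly closed $\gamma$ (so that $\gamma'(0) = \gamma'(1)$), combining these with $A_x F = h_x'(v)$ yields
\[
A_x(\tilde\gamma'(0) - F\tilde\gamma'(1)) = 0,
\]
hence $\tilde\gamma'(0) - F\tilde\gamma'(1) \in \ker A_x = \calD_x$, and applying $\pi$ gives the fundamental relation $\pi(\tilde\gamma'(0)) = \Phi(\pi(\tilde\gamma'(1)))$. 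Thus $\Phi$ sends the transverse component of the terminal velocity of any lift to the transverse component of its initial velocity.

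The hard part will be to exhibit a lift (or a judicious choice of loop within its homotopy class together with its lift) such that the two transverse components $\pi(\tilde\gamma'(0))$ and $\pi(\tilde\gamma'(1))$ actually coincide; any such coincidence would immediately produce a non-zero fixed vector of $\Phi$. The obstacle is that this matching of derivatives at the seam, in view of the identity $A_x F = h_x'(v)$, is essentially equivalent to the eigenvalue-$1$ condition itself, so the actual resolution must come from a more subtle ingredient — either a fixed-point/implicit-function argument on the space of lifts, an iteration of the loop combined with a compactness or convergence argument, or a direct argument exploiting the non-uniqueness of lifts modulo $\calD_x$ together with the freedom in choosing the loop within its homotopy class. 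My expectation is that this last step is the heart of the proof.
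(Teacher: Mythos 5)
Your setup correctly isolates the key algebraic identity --- differentiating the second axiom at $(x,v)$ with $h(x,v)=x$ to get $A_x\circ F = h_x'(v)$ --- but the proposal does not prove the theorem, and you say so yourself: the step of producing a loop and a lift whose initial and terminal velocities have matching transverse components is left open, and, as you observe, it is essentially equivalent to the conclusion. Nothing forces a lift supplied by Proposition~\ref{pathLift} to close up transversally: that lift is only piecewise smooth and is built by concatenation, so its one-sided velocities at $t=0$ and $t=1$ are unrelated, and no fixed-point or iteration argument is actually supplied to remedy this. A second genuine gap is the restriction to the maximal-rank case $\ker A_x=\im A_x=\calD_x$: only there are $A_x|_{T_xN}$ and $h_x'(v)|_{T_xN}$ isomorphisms onto $\calD_x$, so the factorization $\Phi=(A_x|_{T_xN})^{-1}\circ h_x'(v)|_{T_xN}$ and the passage from $\tilde\gamma'(0)-F\tilde\gamma'(1)\in\ker A_x$ to a statement about $\pi$ both fail for a general regular monadic foliation, where $\calD_x=\im A_x$ may be a proper subspace of $\ker A_x$.

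The paper's resolution avoids loops and lifts entirely; Proposition~\ref{pathLift} serves only to show that every holonomy element is represented by a slide $f\colon y\mapsto h(y,Y_y)$ with $Y\in\frakX(N)$, $Y_x=v_0$ and $f(x)=x$, after which the eigenvalue statement is linear algebra. Differentiating the second axiom in $(\dot x,\dot v)$ at $(x,v_0)$ gives
\[
h_x'\bigl(h'(x,v_0)(\dot x,\dot v)\bigr)\circ h'(x,v_0)=h_x'(v_0+\dot x)\circ\pr_1,
\]
from which one reads off that $f'(x)y-y\in\ker h_x'(v_0+\dot x)$ for all $y\in T_xN$. The intersection of that kernel with $T_xN$ is a proper subspace: it contains $\calD_x$ by~\eqref{eq:incl-ker}, and if it contained all of $T_xN$ as well then $h_x'(v_0+\dot x)$ would vanish on $\calD_x\oplus T_xN=T_xM$, contradicting $\im h_x'(v_0+\dot x)=\calD_{h(x,v_0+\dot x)}\neq 0$ for positive rank. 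Hence $f'(x)-\id_{T_xN}$ is not surjective, hence not injective, and $1$ is an eigenvalue. If you want to salvage your route, the quickest repair is to discard the loop and run this non-surjectivity argument directly on the identity you already derived, using it at nonzero $\dot x$ as well as at $\dot x=0$.
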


\begin{proof}
Fix an element of the holonomy group at $x$ given by a function $f$ as above.
That is, let $N$ be a local transversal, $Y \in \frakX(N)$ and $f \colon y \mapsto h(y,Y_y)$, with $f(x) = h(x,Y_x) = x$. We write $Y_x=v_0$. Then
\[
f'(x)y = h'(x,v_0)(y,Y'_x (y)).
\]
Differentiating the second axiom for algebras with respect to $(\dot{x},\dot{v})$ at $(x,v_0,\dot{x},\dot{v})$ gives
\[
{h_x}' \big( h'(x,v_0)(\dot{x},\dot{v}) \big) \circ h'(x,v_0) = {h_x}'(v_0+\dot{x}) \circ \pr_1.
\]
Fix $\dot{x} \in T_xN$ and suppose that 1 is not an eigenvalue of $f'(x) \in \calL(T_xN)$ (considered as an endomorphism via a fixed isomorphism $f'(x)T_xN \simeq T_xN$ coming from a ``small'' sliding). Then there is a $z \in T_xN$ such that $f'(x)z = z + v_0 + \dot{x}$. Therefore,
\[
h'(x,v_0) - \pr_1 \in \ker {h_x}'(v_0+\dot{x})
\]
so for all $y \in T_xN$, $f'(x)y - y \in \ker {h_x}'(v_0+\dot{x}) \subsetneq T_xN$, so $f'(x)-\id_{T_xN}$ is not surjective, so $1$ is an eigenvalue of $f'(x)$, which contradicts our assumption.
\end{proof}

This property of monadic foliations will prove useful in the next chapter for algebras on surfaces, where it implies that the characteristic multiplier of a periodic orbit has to be 1.

\section{Orientability of a monadic foliation and orientable cover}

A regular algebra is called (transversally) \define{orientable} if its induced distribution is.
For instance, the free algebra on $M$ is orientable if and only if $M$ is.
As an example, let $M$ be a non-orientable 1-dimensional manifold, for instance the boucle $\RR/\sim$ where then non-trivial identifications are $x\sim -x$ if and only if $|x|>1$. Note that this manifold is connected and second countable. Then the free algebra $h \colon T^2M \to TM$ induces a regular rank-1 foliation which is non orientable (a way to visualize this is to write $TM = (\RR \times \RR)/\sim$ where $(x,v) \sim (-x,-v)$ if $|x|>1$).

If $\calF$ is a regular foliation on the manifold $M$, then its orientation cover is the foliated two-folded covering $p_\oc \colon (M,\calF)_\oc \to (M,\calF)$ where $(M,\calF)_\oc$ is the foliated manifold on $\{(x,\calO_x) \mid x \in M \text{ and $\calO_x$ is an orientation of $T_x\calF$ } \}$ with obvious smooth structure, and the pullback foliation, which is orientable. For instance, the orientation cover of the ``U''-shape foliation on the torus is ``twice'' it (that is, two U's, so that the result is orientable). There is a similar notion of transverse orientation cover. For algebras of maximal rank, this gives the same result, since the tangent and normal bundles of the foliation are isomorphic.

If $h$ is a regular algebra on $M$, then it induces a regular foliation and by the general construction of Chapter~\ref{chap:tfmonad}, one can lift $h$ to an algebra $h_\oc$ on $M_\oc$. It is given  by
\begin{equation}
h_\oc \left( (x,\calO_x), v \right) = \left( h(x,v), \calO_{\calO_x,v} \right)
\end{equation}
where $\calO_{\calO_x,v}$ is the orientation of $T_{h(x,v)}\calF$ obtained from the orientation $\calO_x$ of $T_x\calF$ by following the path $[0,1] \to M, t \mapsto h(x,tv)$. Therefore, the foliation induced by $h_\oc$ is $\calF_\oc$.

Therefore, we can reduce the study of regular monadic foliations to orientable (or transversally orientable) ones.

\chapter{Algebras of rank 1 and algebras on surfaces}
\label{chap:surf}

In this chapter, we study the special case of algebras of rank 1, and then of algebras on surfaces (necessarily of rank 1).
We will focus on regular algebras, which we can suppose orientable from the previous chapter. They are given by following the flow of a vector field for a certain time depending on a tangent vector. When this time is fiberwise linear, we give a characterization of these algebras. We give an example of an algebra with a non-linearizable ``time function'', but also give a sufficient condition for linearizability. Finally, we characterize morphisms between these algebras, which are the analogs of the conjugation of dynamical systems.

\section{Algebras of rank 1 and flows}

We begin with the study of regular algebras of rank 1, and we have seen that it is enough to study the orientable ones. It turns out that they have a rather special form.

\begin{prop}
\label{prop:rank1}
If $h \colon TM \to M$ is a regular orientable algebra of rank 1, then there exists a non-vanishing vector field $X$ with maximal flow $\phi$ and a smooth function $\alpha \in C^\infty(TM)$ with $\alpha(x,0)=0$ for any $x \in M$, unique among continuous functions with this property once $X$ is chosen, such that
\begin{equation}
h(x,v) = \phi_{\alpha(x,v)} (x)
\end{equation}
for any $(x,v) \in TM$.
\end{prop}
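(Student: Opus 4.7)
By Theorem~\ref{thm:folia}, the regular (and hence tame) algebra $h$ induces a foliation of $M$ with tangent distribution $\calD = \im A$, which is a line subbundle by the rank-$1$ and regularity assumptions. Orientability of $\calD$ gives a non-vanishing global section $X \in \Gamma(\calD)$, and I let $\phi$ be its maximal flow; for each $x \in M$, the leaf $L_x$ is the orbit of $X$ through $x$, and $t \mapsto \phi_t(x)$ is a smooth covering of $L_x$ from its maximal interval of definition $(a_x, b_x) \subseteq \RR$ (a diffeomorphism when $L_x$ is non-compact, the standard periodic covering of the circle otherwise).

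For existence, I would fix a Riemannian metric $g$ on $M$ (using paracompactness) and form the smooth 1-form $\omega = g(X, -)/g(X, X)$, which satisfies $\omega(X) = 1$. For $(x, v) \in TM$, the smooth curve $s \mapsto h(x, sv)$ takes values in $L_x$, so its velocity is tangent to $\calD$ and hence a smooth scalar multiple of $X$:
\[
\tfrac{d}{ds} h(x, sv) = \lambda(s, x, v)\, X_{h(x, sv)}, \qquad \lambda(s, x, v) = \omega_{h(x, sv)}\bigl(\tfrac{d}{ds} h(x, sv)\bigr).
\]
I then set $\alpha(x, v) = \int_0^1 \lambda(s, x, v)\, ds$, which is smooth in $(x, v)$ and vanishes at $v = 0$. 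To verify the identity $\phi_{\alpha(x, v)}(x) = h(x, v)$, I would compare the two curves $\psi_1(s) = h(x, sv)$ and $\psi_2(s) = \phi_{\int_0^s \lambda(t, x, v)\, dt}(x)$: both start at $x$ and satisfy the same time-dependent ODE $\dot\psi(s) = \lambda(s, x, v)\, X_{\psi(s)}$, so they agree by uniqueness of ODE solutions, and evaluating at $s = 1$ gives the required formula.

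For uniqueness, let $\beta$ be any continuous function on $TM$ with $\beta(x, 0) = 0$ and $\phi_{\beta(x, v)}(x) = h(x, v)$. Fixing $x \in M$, both $s \mapsto \alpha(x, sv)$ and $s \mapsto \beta(x, sv)$ are continuous lifts, starting at $0$, of the path $s \mapsto h(x, sv) \in L_x$ under the covering $t \mapsto \phi_t(x)$; the unique path-lifting property of covering maps forces them to agree on $[0, 1]$, whence $\beta(x, v) = \alpha(x, v)$. The main obstacle is the smoothness of $\alpha$: a definition by path-lifting alone would only yield continuity, which is why I rely on the integration formula via the auxiliary 1-form $\omega$ to make smoothness manifest from the outset.
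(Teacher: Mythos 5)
Your proposal is correct, and its existence argument takes a genuinely different route from the paper's. The paper first produces $\alpha$ locally by applying the implicit function theorem to $G(x,v,t)=\phi_t(x)-h(x,v)$ in a distinguished chart of the foliation, then obtains the global $\alpha_x$ as the lift of $h_x \colon T_xM \to \mathrm{Orb}(x)$ through the covering $t \mapsto \phi_t(x)$ (using that $T_xM$ is simply connected), and finally proves smoothness of this lift by matching it, along each segment $[0,v]$, with the local implicit-function solutions. You instead build $\alpha$ in closed form by integrating the $X$-component of the velocity of $s \mapsto h(x,sv)$ --- which lies in $\calD$ by~\eqref{eq:incl-im} --- so that smoothness and the vanishing on the zero section are immediate, and the identity $h(x,v)=\phi_{\alpha(x,v)}(x)$ follows from uniqueness for a time-dependent ODE; this is arguably cleaner and avoids the chart-by-chart patching. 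Your uniqueness argument by path lifting is essentially the paper's. Two points to tighten: (i) you should justify that $\phi_{\sigma(s)}(x)$ is defined for all $s\in[0,1]$, i.e.\ that $\sigma(s)=\int_0^s\lambda(t,x,v)\,dt$ stays in the maximal interval $I_x$ of the (not necessarily complete) field $X$; this follows from an open--closed argument together with the escape lemma, since any solution of $\dot\psi=\lambda(s,x,v)X_\psi$ starting at $x$ must coincide with $s\mapsto h(x,sv)$ and therefore remains in the compact set $h(x,[0,1]v)$; (ii) the Riemannian metric is superfluous (and in the Banach setting not always available): since $X$ trivializes the line bundle $\calD$, the dual section $\omega\in\Gamma(\calD^*)$ determined by $\omega(X)=1$ is already globally defined and suffices to extract the coefficient $\lambda$.
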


\begin{proof}
By hypothesis, the induced distribution $\calD$ is orientable and has rank~1, so is integrable in an orientable foliation of dimension~1, which is induced by a non-vanishing vector field $X$ whose maximal flow we denote by $\phi$.

Let $(x_0,v_0) \in TM$ and choose a distinguished chart around $h(x_0,v_0)$ and a time $t_0 \in \RR$ such that $\phi_{t_0}(x_0) = h(x_0,v_0)$. There are open neighborhoods $U$ of $(x_0,v_0)$ and $V$ of $t_0$ such that for $(x,v) \in U$ and $t \in V$, both $\phi_t(x)$ and $h(x,v)$ are in the domain of the distinguished chart and on the same plaque. Define $G \colon U \times V \to \RR$ by $G(x,v,t) = \phi_t(x) - h(x,v)$ (where in the right-hand side, we mean the projection on the leaf coordinate of what is written). Since $X$ does not vanish on $M$, $\frac{\partial G}{\partial t}(x_0,v_0,t_0) \colon \RR \to \RR$ is a linear isomorphism, so by the implicit function theorem, there are open neighborhoods of $(x_0,v_0)$ and $t_0$ included in $U$ and $V$ that we still denote by these letters, and a smooth function $\gamma \colon U \to V$ such that $\gamma(x_0,v_0)=t_0$ and for $(x,v) \in U$ and $t \in V$, $G(x,v,t) = 0$, that is, $h(x,v) = \phi_t(x)$, if and only if $t=\gamma(x,v)$.

If there is no periodic orbit, then the $\gamma$'s for different charts necessarily agree, so we can patch them to get $\alpha \in C^\infty(TM)$ and the proposition is proved.

In the general case, we proceed as follows.
For any $x \in M$, let $I_x$ be the domain of the maximal integral curve of $X$ through $x$ ($I_x$ is an interval containing 0). Then $\phi_{\ast}(x) \colon I_x \to \mathrm{Orb}(x)$ is a covering.
Therefore, $T_x M$ being simply connected, $h_x \colon T_xM \to \mathrm{Orb}(x)$, which is smooth because orbits are initial submanifolds, has a unique lift to a map $\alpha_x \colon T_x M \to I_x$ vanishing at 0.
We claim that this defines a smooth function $\alpha \colon TM \to \RR$.

Indeed, let $(x,v) \in TM$. The local result of the second paragraph of the proof applied at each $(x,tv)$ for $t \in [0,1]$ gives an open
neighborhood of $(x,tv)$ and a smooth function $\gamma$ on it. Without loss of generality, we can suppose that each of these open neighborhoods intersects $[0,v]$ (the segment joining $(x,0)$ to $(x,v)$ in $TM$) in a connected set.
Extract a finite cover of $[0,v]$ by these neighborhoods, say $U_i$'s, with $U_i \cap U_{i+1} \cap [0,v] \neq \varnothing$ for all $i$.
Then by induction on $i$ we see that the restriction of $\alpha$ to $U_i$ agrees with $\gamma_i$ (because of the uniqueness result in the implicit function theorem) for each $i$.
In particular, $\alpha$ is smooth at $(x,v)$.
\end{proof}

For algebras of this form, we have
\begin{equation}
A_x = {\alpha_x}'(0) \otimes X_x
\end{equation}
and more generally ${h_x}'(v) = {\alpha_x}'(v) \otimes X_{h(x,v)}$ if $(x,v) \in TM$.
The algebra axioms also translate into axioms on the map $\alpha \in C^\infty(TM)$. The first is of course that $\alpha(x,0)=0$, and the second is given in the following proposition.

\begin{prop}
\label{prop:timeAxioms}
The map $h$ defined by
\begin{equation*}
h(x,v) = \phi_{\alpha(x,v)} (x)
\end{equation*}
for any $(x,v) \in TM$, where $\phi$ is the maximal flow of the vector field $X$ and $\alpha \in C^\infty(TM)$ with $\alpha(x,0)=0$, is an algebra if and only if
\begin{align}
\alpha(x,v+\lambda X_x) &= \alpha(x,v) \quad\text{and}\\
\alpha \big( \phi_{\alpha(x,v)} (x), {\phi_{\alpha(x,v)}}' (x) \dot{x} \big) &= \alpha(x,v+\dot{x}) - \alpha(x,v) \label{eq:alpha}
\end{align}
for any $x \in M$, $v, \dot{x} \in T_xM$ and $\lambda \in \RR$.
\end{prop}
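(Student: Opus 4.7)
The plan is to derive the two axioms on $\alpha$ from the algebra axioms on $h$ by direct computation, then check the converse. First I would compute the tangent map of $h$. Writing $y = h(x,v) = \phi_{\alpha(x,v)}(x)$ and differentiating,
\[
h'(x,v)(\dot{x},\dot{v}) = d\alpha(x,v)(\dot{x},\dot{v}) \, X_y + {\phi_{\alpha(x,v)}}'(x)\dot{x},
\]
so $h'(x,v)(\dot{x},\dot{v})$ differs from $\phi'_{\alpha(x,v)}(x)\dot{x}$ only by a scalar multiple of $X_y$. This is the identity that will let axiom~\eqref{eq:alpha}, which involves only $\phi'_{\alpha(x,v)}(x)\dot{x}$, interact with a general $h'(x,v)(\dot{x},\dot{v})$.

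For the ``only if'' direction, assume $h$ is an algebra. The first axiom comes from the general $\calD$-invariance~\eqref{eq:D-invar}: since $\calD_x = \spann(X_x)$, we have $h(x,v+\lambda X_x) = h(x,v)$, i.e.\ $\phi_{\alpha(x,v+\lambda X_x)}(x) = \phi_{\alpha(x,v)}(x)$, so $\lambda \mapsto \alpha(x,v+\lambda X_x) - \alpha(x,v)$ is continuous with values in the stabilizer of $x$ under the flow --- a closed, hence discrete, subgroup of $\RR$. Being connected and vanishing at $\lambda=0$, it is identically zero. The second axiom is obtained by specializing the algebra axiom~\eqref{eq:algaxb} to $\dot{v}=0$: we get $\phi_{\alpha(y,\,h'(x,v)(\dot x,0))}(y) = \phi_{\alpha(x,v+\dot x)}(x) = \phi_{\alpha(x,v+\dot x)-\alpha(x,v)}(y)$ using the flow law. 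The same connectedness/discreteness argument applied to the continuous path $t \mapsto \alpha(y, h'(x,v)(t\dot x,0)) - \alpha(x,v+t\dot x) + \alpha(x,v)$, which takes values in the stabilizer of $y$ and vanishes at $t=0$, yields
\[
\alpha(y, h'(x,v)(\dot x,0)) = \alpha(x,v+\dot x) - \alpha(x,v).
\]
Combining with the tangent map formula (with $\dot v = 0$), $h'(x,v)(\dot x,0) = d\alpha(x,v)(\dot x,0)\, X_y + \phi'_{\alpha(x,v)}(x)\dot x$, and invoking the first axiom on $\alpha$ to absorb the $X_y$-term, we obtain \eqref{eq:alpha}.

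The ``if'' direction is a direct check. The unit axiom $h(x,0)=x$ is immediate from $\alpha(x,0)=0$ and $\phi_0 = \id$. For the second algebra axiom, starting from $h(y,h'(x,v)(\dot x,\dot v))$, the first axiom on $\alpha$ lets us discard the $d\alpha(x,v)(\dot x,\dot v)\,X_y$ component, reducing the $\alpha$-argument to $\alpha(y,\phi'_{\alpha(x,v)}(x)\dot x)$, which by the second axiom on $\alpha$ equals $\alpha(x,v+\dot x)-\alpha(x,v)$; applying $\phi$ and using $\phi_{s+t} = \phi_s \circ \phi_t$ yields $h(x,v+\dot x)$, as required.

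The main obstacle is the translation from equality of $\phi$-outputs to equality of time arguments: \textit{a priori} two times yielding the same point differ by an element of the isotropy group of $y$, which may be nontrivial when periodic orbits exist. The resolution is the standard connectedness argument above, relying on continuity of $\alpha$ and on the discreteness of closed subgroups of $\RR$; this is exactly why the uniqueness clause in Proposition~\ref{prop:rank1} is phrased in terms of continuous lifts.
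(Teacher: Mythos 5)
Your proposal is correct and follows essentially the same route as the paper's proof: compute $h'(x,v)(\dot x,\dot v)$ as a multiple of $X_{h(x,v)}$ plus $\phi_{\alpha(x,v)}'(x)\dot x$, get the first relation from the $\calD$-invariance $h(x,v+\calD_x)=h(x,v)$, get the second from the algebra axiom and the flow law, and dispose of the period ambiguity $P_x\ZZ$ by a continuity argument (you deform in $t$ along the segment where the paper deforms in $v$, an immaterial difference). The converse check and the implicit identification $\calD_x=\spann(X_x)$ match the paper as well.
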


\begin{proof}
From the general property $h(x,v+\calD_x)=h(x,v)$, we deduce that $\alpha(x,v+\lambda X_x) \in \alpha(x,v) + P_x \ZZ $ where $P_x$ is the period of $x$, or 0 if the orbit of $x$ is not periodic. By continuity of $\alpha$, we deduce $\alpha(x,v+\lambda X_x) = \alpha(x,v)$.

For the second relation, we have in charts
$h'(x,v)(\dot{x},\dot{v}) = \alpha'(x,v)(\xi) X_{h(x,v)} + {\phi_{\alpha(x,v)}}'(x) \dot{x}$,
therefore, using the first relation, the second algebra axiom reads
\[
\phi_{ \alpha \big( h(x,v) , \phi_{\alpha(x,v)}'(x)\dot{x} \big) } \big( \phi_{\alpha(x,v)}(x) \big) = \phi_{\alpha(x,v+\dot{x})} (x).
\]
From the morphism property of flows, we deduce
\[
\alpha \big( h(x,v) , {\phi_{\alpha(x,v)}}'(x)\dot{x} \big) \in \alpha(x,v+\dot{x}) - \alpha(x,v) + P_x \ZZ.
\] 
Again, this expression is continuous in $v$ and we have equality for $v=0$, so we can ignore the $P_x\ZZ$.

Conversely, if $\alpha$ satisfies these equations, then $h$ is an algebra.
\end{proof}

The first condition says that we can consider $\alpha$ as a function on the transverse bundle ($\alpha$ is ``semibasic''), and the second is a sort of invariance property, making $\alpha$ a kind of nonlinear transverse structure of the foliation. A consequence is that for a regular algebra of rank 1, the maps ${h_x}'(v)$ all have rank 1.

If $\alpha$ is a 1-form, then the situation is clearer. Recall that if $X \in \frakX(M)$, a differential form $\omega \in \Omega(M)$ is called \define{$X$-semibasic} if $i_X \omega = 0$ and \define{$X$-basic} if furthermore $i_X d\omega = 0$ or equivalently $\calL_X \omega = 0$. These notions depend only on the foliation generated by $X$, and more generally, if $\calF$ is a foliation, a form is called $\calF$-(semi)basic if it is $X$-(semi)basic for any vector field $X$ tangent to $\calF$.
Basic forms of foliations are studied in~\cite{tond} and~\cite{cama}.

\begin{prop}
Let $M$ be a manifold, $X \in \frakX(M)$ be a complete vector field on $M$, and $\alpha \in \Omega^1(M)$ be a 1-form on $M$. Let $\phi$ be the maximal flow of $X$.
Then the map
\begin{align*}
h: TM & \to M\\
(x,v) & \mapsto \phi_{\alpha_x(v)} (x)
\end{align*}
is an algebra if and only if $\alpha$ is $X$-basic.
\end{prop}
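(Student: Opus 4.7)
The plan is to apply Proposition~\ref{prop:timeAxioms} with $\alpha(x,v) = \alpha_x(v)$, translate each of the two conditions into a statement about the form $\alpha$, and use $X$-basicity (equivalently $i_X\alpha = 0$ and $\mathcal{L}_X\alpha = 0$) to match them. The proof is in two directions.

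First, the semibasic condition. Condition~\eqref{eq:alpha}'s first line reads $\alpha_x(v+\lambda X_x) = \alpha_x(v)$ for all $\lambda \in \RR$, which by linearity of $\alpha_x$ is equivalent to $\alpha_x(X_x) = 0$ for all $x$, i.e. $i_X\alpha = 0$. So this part is immediate from linearity.

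Next, the second line of~\eqref{eq:alpha}. Using linearity of $\alpha_x$, the right-hand side simplifies to $\alpha_x(\dot{x})$, and the left-hand side is $(\phi_{\alpha_x(v)}^*\alpha)_x(\dot{x})$. So the axiom becomes
\[
(\phi_{\alpha_x(v)}^*\alpha)_x(\dot{x}) = \alpha_x(\dot{x})
\qquad \text{for all } x \in M,\ v,\dot{x} \in T_xM.
\]
For the ``if'' direction, if $\alpha$ is $X$-basic then $\mathcal{L}_X\alpha = 0$, so $\phi_t^*\alpha = \alpha$ for every $t \in \RR$; the displayed equation is then trivially satisfied, so $h$ is an algebra by Proposition~\ref{prop:timeAxioms}.

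For the ``only if'' direction, fix $x \in M$ with $\alpha_x \neq 0$ as a linear form. Then $\{\alpha_x(v) : v \in T_xM\} = \RR$, so the displayed equation says $(\phi_t^*\alpha)_x = \alpha_x$ for every $t \in \RR$. Differentiating at $t=0$ gives $(\mathcal{L}_X\alpha)_x = 0$. At a point $x$ where $\alpha_x = 0$, one must argue slightly differently: on the interior of $\{y : \alpha_y = 0\}$ the form $\alpha$ vanishes identically on a neighborhood, so $\mathcal{L}_X\alpha$ vanishes there too; and on the boundary, $(\mathcal{L}_X\alpha)_x = 0$ follows by continuity from the previous case on the approaching open set where $\alpha_y \neq 0$. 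Thus $\mathcal{L}_X\alpha = 0$ on all of $M$, and combined with $i_X\alpha = 0$ this is precisely $X$-basicity.

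The only potentially delicate step is the continuity argument to extend $\mathcal{L}_X\alpha = 0$ from the open set $\{y : \alpha_y \neq 0\}$ to its closure; everything else reduces to the translation between $\phi_t^*\alpha = \alpha$ and $\mathcal{L}_X\alpha = 0$ together with linearity of $\alpha_x$ in~$v$.
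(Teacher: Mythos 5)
Your proof is correct and follows essentially the same route as the paper: translate the two conditions of Proposition~\ref{prop:timeAxioms} using fiberwise linearity into $i_X\alpha=0$ and $(\phi_t^*\alpha)_x=\alpha_x$ for all $t$ in the range of $\alpha_x$, then use surjectivity of $\alpha_x$ onto $\RR$ when $\alpha_x\neq 0$. The only (minor) divergence is at points where $\alpha_x=0$: you extend $\calL_X\alpha=0$ there by a density-and-continuity argument on the zero set, whereas the paper observes that the invariance relation forces $\alpha$ to vanish along the entire orbit of such a point, so that $(\phi_t^*\alpha)_x=0=\alpha_x$ for all $t$ directly; both settle that case correctly.
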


\begin{proof}
If $\alpha \in \Omega^1(M)$, then the relations of the previous proposition become
\begin{align*}
i_X \alpha &= 0\quad \text{and}\\
\big( {\phi_{\alpha_x(v)}}^*\alpha\big)_x (\dot{x}) &= \alpha_x(\dot{x}).
\end{align*}
In particular, $\alpha_x$ is either 0 or non-vanishing on any orbit of $X$. If it is 0, there is nothing to prove. If it is non-vanishing, given any $t \in \RR$, there is a $v \in T_xM$ such that $\alpha_x(v)=t$, so that
\[
\big( {\phi_t}^*\alpha\big)_x (\dot{x}) = \alpha_x(\dot{x})
\]
for any $t \in \RR$, so $\calL_X \alpha = 0$.
\end{proof}

As an example, if $\omega \in \Omega^2(M)$ is a presymplectic form and $X$ is an $\omega$-presymplectic vector field, then $i_X \omega$ is an $X$-basic 1-form. This shows that ``presymplectizable'' flows are monadic.

\begin{rmk}
One can construct similar ``reducible'' examples of algebras of higher rank by using commuting flows of independent vector fields and independent common basic 1-forms.
\end{rmk}

\subsection{An algebra inducing a foliation with no non-vanishing basic form}

One can ask if all algebras of rank 1 are of the form of the previous proposition. This is not the case, even on surfaces. To construct an example, we first need the following compatibility result:

The conditions of Proposition~\ref{prop:timeAxioms} on $\alpha$ determine it on the whole orbit of a point once we know it at one point. The converse statement is also true. Namely, suppose that for a given $x \in M$, we have
$\alpha(\phi_{\alpha(x,v)}(x), {\phi_{\alpha(x,v)}}'(x)\dot{x}) = \alpha(x,v+\dot{x}) - \alpha(x,v)$
for all $v, \dot{x} \in T_xM$.
We will show that $\alpha$ satisfies the required condition~\eqref{eq:alpha} on the whole orbit of $x$.
Let $y = \phi_{\alpha(x,v_0)}(x)$. Then
\begin{equation}\label{eq:comp1}
\alpha(y,w) = \alpha(\phi_{\alpha(x,v_0)}(x),w) = \alpha(x,v_0+{{\phi_{\alpha(x,v_0)}}'(x)}^{-1}w) - \alpha(x,v_0)
\end{equation}
and similarly for $\alpha(y,w+\dot{y})$, so that
\begin{equation}\label{eq:comp2}
\alpha(y,w+\dot{y}) - \alpha(y,w) = \alpha(x,v_0 + {{\phi_{\alpha(x,v_0)}}'(x)}^{-1} (w+\dot{y})) - \alpha(x,v_0 + {{\phi_{\alpha(x,v_0)}}'(x)}^{-1} w).
\end{equation}
On the other hand,
\begin{align*}
\alpha(\phi_{\alpha(y,w)}(y),{\phi_{\alpha(y,w)}}'(y)\dot{y})
&=
\alpha(\phi_{\alpha(y,w)+\alpha(x,v_0)}(x), {\phi_{\alpha(y,w)+\alpha(x,v_0)}}'(x) \circ {{\phi_{\alpha(x,v_0)}}'(x)}^{-1} \dot{y})\\
&=
\alpha(\phi_{\alpha(x,v_0+{{\phi_{\alpha(x,v_0)}}'(x)}^{-1}w)}(x),\\
& \qquad\qquad {\phi_{\alpha(x,v_0+{{\phi_{\alpha(x,v_0)}}'(x)}^{-1}w)}}'(x) \circ {{\phi_{\alpha(x,v_0)}}'(x)}^{-1} \dot{y})\\
&=
\alpha(x,v_0+{{\phi_{\alpha(x,v_0)}}'(x)}^{-1}w + {{\phi_{\alpha(x,v_0)}}'(x)}^{-1}\dot{y})\\
& \qquad\qquad - \alpha(x,v_0+{{\phi_{\alpha(x,v_0)}}'(x)}^{-1}w)
\end{align*}
which has the same right hand side as~\eqref{eq:comp2}, which proves the result.

Our example of algebra will induce the foliation given by the vector field $X = \id_{\RR^2}$. Its flow is given by $\phi_t(x) = e^t x$ for $x \in \RR^2$ and $t \in \RR$.
It has no nonzero basic 1-form. Indeed, such a form would be closed hence exact, so is the differential of a basic function, but the origin is in the closure of all the orbits, so the basic functions are constant. We want to construct an algebra $h$ given by $h(x,v) = \phi_{\alpha(x,v)}(x)$, and we look for one with symmetry of rotation. By the above result, it is therefore enough to define $f(u)=\alpha( \vect{1}{0}, \vect{0}{u})$ for $u \in \RR$. This function $f$ should be a diffeomorphism of $\RR$, say increasing, vanishing at 0. The only condition to check is that the algebra $h$ is smooth at the origin.

If $x>0$, then $h(\vect{x}{0},\vect{0}{v}) = \exp \left( f(f^{-1}(\ln x) + \frac{v}{x}) \right) \vect{1}{0}$, so we want that $x \to 0^+$ and $v$ bounded imply $f(f^{-1}(\ln x) + \frac{v}{x})) \to -\infty$. This will be the case if $f$ increases slowly enough near $-\infty$, for instance $f(x) = -c \ln(-x)$ with $c \in (0,1)$. If we choose $c=\frac12$, the algebra $h$ takes the nice form
\begin{equation}
h(x,v) = \frac{x}{\sqrt{1 - x \wedge v}}
\end{equation}
when $x \wedge v < \frac12$, for instance.

\section{Algebras on surfaces}
\label{sec:surf}

We have seen in the previous example that not all orientable algebras of rank 1 are given by a 1-form. However, if the induced foliation has a non-vanishing basic 1-form, then it is possible to rescale the vector field so that the algebra be given by a 1-form. This is the object of the following proposition. It uses the following lemma.

\begin{lem}
Let $h$ be an algebra on a manifold $M$ given by $h(x,v) = \phi_{\alpha(x,v)} (x)$ where $\phi$ is a flow of a vector field $X \in \frakX(M)$ and $\alpha \in C^\infty(TM)$ with $\alpha(x,0)=0$. If $\alpha(x_0,\cdot)$ is linear for some $x_0 \in M$ and $y_0 \in L_{x_0}$, then $\alpha(y_0,\cdot)$ is also linear.
\end{lem}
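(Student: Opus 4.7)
The plan is to exploit the compatibility formula derived in the discussion preceding the lemma, which expresses $\alpha$ at a second point of a leaf in terms of its values at a first point. Since $y_0 \in L_{x_0}$, there exists $v_0 \in T_{x_0}M$ such that $y_0 = h(x_0,v_0) = \phi_{\alpha(x_0,v_0)}(x_0)$. Applying equation~\eqref{eq:comp1} with $x = x_0$ and $y = y_0$ gives, for every $w \in T_{y_0}M$,
\[
\alpha(y_0, w) \;=\; \alpha\bigl(x_0,\; v_0 + {\phi_{\alpha(x_0,v_0)}}'(x_0)^{-1} w\bigr) - \alpha(x_0, v_0).
\]
Note that ${\phi_{\alpha(x_0,v_0)}}'(x_0)^{-1} \colon T_{y_0}M \to T_{x_0}M$ is a linear isomorphism (the tangent map of the diffeomorphism $\phi_{-\alpha(x_0,v_0)}$ at $y_0$).

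Now I would invoke the hypothesis that $\alpha(x_0,\cdot)$ is linear on $T_{x_0}M$. By additivity,
\[
\alpha\bigl(x_0,\; v_0 + {\phi_{\alpha(x_0,v_0)}}'(x_0)^{-1} w\bigr) \;=\; \alpha(x_0, v_0) + \alpha\bigl(x_0,\; {\phi_{\alpha(x_0,v_0)}}'(x_0)^{-1} w\bigr),
\]
and substituting back the constant terms $\alpha(x_0,v_0)$ cancel, leaving
\[
\alpha(y_0, w) \;=\; \alpha\bigl(x_0,\; {\phi_{\alpha(x_0,v_0)}}'(x_0)^{-1} w\bigr).
\]
Since this is the composition of the linear map ${\phi_{\alpha(x_0,v_0)}}'(x_0)^{-1}$ with the linear map $\alpha(x_0,\cdot)$, it is linear in $w$, which is the desired conclusion.

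There is no substantial obstacle: the whole proof reduces to a single substitution once equation~\eqref{eq:comp1} is in hand. The only mild subtlety is that the choice of $v_0$ with $y_0 = \phi_{\alpha(x_0,v_0)}(x_0)$ is not unique (if the orbit is periodic, or more generally because of the semibasicity $\alpha(x_0, v_0 + \lambda X_{x_0}) = \alpha(x_0, v_0)$), but this is harmless: the hypothesis bears on all of $\alpha(x_0,\cdot)$, so any admissible $v_0$ yields the same linear functional $\alpha(y_0,\cdot)$. Conceptually, the lemma promotes linearity of $\alpha$ from a pointwise property to a property of the leaf $L_{x_0}$, which is exactly the form needed to set up a $1$-form representing $h$ in the subsequent proposition on surfaces.
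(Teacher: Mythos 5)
Your proof is correct and is essentially the paper's own argument: the paper applies Equation~\eqref{eq:alpha} at $x_0$ and uses linearity to write $\alpha(x_0,v+\dot{x})-\alpha(x_0,v)=\alpha(x_0,\dot{x})$, concluding $\alpha(y_0,\cdot)=\alpha(x_0,\cdot)\circ{{\phi_{\alpha(x_0,v)}}'(x_0)}^{-1}$, which is exactly your computation after the change of variable $w={\phi_{\alpha(x_0,v_0)}}'(x_0)\dot{x}$ relating \eqref{eq:comp1} to \eqref{eq:alpha}. No gaps; your remark about the non-uniqueness of $v_0$ is a harmless aside.
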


\begin{proof}
There is a $v \in T_{x_0}M$ such that $h(x_0,v)=y_0$, so that Equation~\eqref{eq:alpha} reads
\[
\alpha \big( y_0, {\phi_{\alpha(x_0,v)}}'(x_0)(\dot{x}) \big) = \alpha(x_0,v+\dot{x}) - \alpha(x_0,v) = \alpha(x_0,\dot{x})
\]
for any $\dot{x} \in T_{x_0}M$, that is, $\alpha ( y_0, \cdot ) = \alpha(x_0,\cdot) \circ {{\phi_{\alpha(x_0,v)}}'(x_0)}^{-1}$, which is linear.
\end{proof}

\begin{prop}
\label{prop:form}
If a regular algebra of rank 1 on a surface induces an orientable foliation which has a non-vanishing basic 1-form, then it can be written as
\[
(x,v) \mapsto \phi_{\alpha_x(v)}(x)
\]
where $\phi$ is the maximal flow of a non-vanishing vector field $X$ and $\alpha$ is a non-vanishing $X$-basic 1-form.
\end{prop}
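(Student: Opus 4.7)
The plan is to start from the representation of $h$ given by Proposition~\ref{prop:rank1}, namely $h(x,v) = \phi^0_{\alpha(x,v)}(x)$, where $\phi^0$ is the maximal flow of some non-vanishing vector field $X_0$ generating the induced foliation and $\alpha \in C^\infty(TM)$ vanishes on the zero section. The goal is to rescale $X_0$ by a positive function so that the reparametrization of the flow absorbs the nonlinear dependence of $\alpha$ on $v$, leaving the given basic 1-form as the new ``time function''.

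To $\alpha$ I associate the 1-form $\gamma_0 \in \Omega^1(M)$ defined by $\gamma_0(x) = {\alpha_x}'(0)$. Because $h$ has rank $1$ and $A_x = \gamma_0(x) \otimes X_{0,x}$, this 1-form is non-vanishing, and by the first relation of Proposition~\ref{prop:timeAxioms}, $i_{X_0}\gamma_0 = 0$. Differentiating Equation~\eqref{eq:alpha} with respect to $\dot{x}$ at $\dot{x} = 0$ gives the fundamental identity
\begin{equation*}
{\alpha_x}'(v) \;=\; \bigl((\phi^0_{\alpha(x,v)})^\ast \gamma_0\bigr)_x,
\end{equation*}
which expresses the nonlinearity of $\alpha_x$ in $v$ as the failure of $X_0$-invariance of $\gamma_0$.

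By hypothesis there is a non-vanishing $X_0$-basic 1-form $\beta \in \Omega^1(M)$. At each $x$, $\gamma_0(x)$ and $\beta(x)$ are non-zero elements of the 1-dimensional annihilator of $X_{0,x}$ in $T_x^\ast M$, so there exists a unique smooth non-vanishing $g \in C^\infty(M)$ with $\gamma_0 = g\beta$. Since the surface is connected, $g$ has constant sign; after replacing $\beta$ by $-\beta$ if necessary I may assume $g > 0$. Set $X = gX_0$, a non-vanishing vector field generating the same oriented foliation, with maximal flow $\phi$. The form $\beta$ remains $X$-basic: $i_X\beta = g\,i_{X_0}\beta = 0$, and using $\calL_{fY} = f\calL_Y + df \wedge i_Y$, $\calL_X\beta = g\calL_{X_0}\beta + dg \wedge i_{X_0}\beta = 0$.

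It remains to verify that $h(x,v) = \phi_{\beta_x(v)}(x)$. The reparametrization $\phi^0_t(x) = \phi_{S(t,x)}(x)$ with $S(t,x) = \int_0^t g(\phi^0_\sigma(x))^{-1}\, d\sigma$ reduces the claim to $S(\alpha(x,v), x) = \beta_x(v)$. Differentiating in $v$ and substituting the fundamental identity together with $\gamma_0 = g\beta$:
\begin{equation*}
\partial_v\bigl[S(\alpha(x,v), x)\bigr] \;=\; \frac{{\alpha_x}'(v)}{g(\phi^0_{\alpha(x,v)}(x))} \;=\; \bigl((\phi^0_{\alpha(x,v)})^\ast \beta\bigr)_x.
\end{equation*}
Since $\beta$ is $X_0$-basic, $(\phi^0_t)^\ast\beta = \beta$ for all $t$, so the right-hand side is $\beta_x$, independent of $v$. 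As both sides of $S(\alpha(x,v),x) = \beta_x(v)$ vanish at $v = 0$, integrating concludes. The main conceptual step is guessing $g = \gamma_0/\beta$: this is exactly what turns the non-invariance of $\gamma_0$ encoded in the fundamental identity into the $X_0$-invariance of $\beta$; once the choice is made, the verification is routine.
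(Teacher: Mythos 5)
Your proof is correct, and it buys the same result by a somewhat different route than the paper's. The overall strategy is identical: start from the representation of Proposition~\ref{prop:rank1} and rescale the generating vector field so that the nonlinearity of the time function is absorbed into the reparametrization of the flow. In fact your rescaling function agrees with the paper's: the paper sets the scaling factor to be the fibre derivative of the time function at $0$ evaluated on a foliate vector field $Z$ transverse to the foliation, and the $Z$ it constructs is normalized so that the given basic $1$-form takes the value $1$ on it --- which makes that scaling factor exactly your ratio $g=\gamma_0/\beta$. Where you genuinely diverge is in the verification. The paper builds $Z$ from the basic $1$-form via an auxiliary Riemannian metric, and checks fibrewise linearity of the new time function by a pointwise computation using the foliate property of $Z$ and the fact that on a surface $\ker{\alpha_x}'(0)=\ker{\alpha_x}'(v)=\calD_x$ has codimension $1$. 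You bypass foliate vector fields and the metric entirely: differentiating~\eqref{eq:alpha} gives the identity ${\alpha_x}'(v)=\bigl((\phi^0_{\alpha(x,v)})^*\gamma_0\bigr)_x$, and the explicit reparametrization integral $S(t,x)=\int_0^t g(\phi^0_\sigma(x))^{-1}\,d\sigma$ together with the flow-invariance of the basic form lets you integrate in $v$. Your argument is more self-contained and, as a bonus, identifies the resulting linear time function as the given basic form itself, whereas the paper only shows the new time function is a $1$-form and then appeals to the earlier proposition to conclude it is basic. Two cosmetic remarks: the sign normalization of $g$ is unnecessary (nothing downstream uses $g>0$), and it is worth noting explicitly that $S(\alpha(x,v),x)$ is defined because $\alpha_x$ takes values in the domain $I_x$ of the maximal integral curve, as guaranteed by the construction in Proposition~\ref{prop:rank1}.
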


The proof relies on the following lemma concerned with the flow of a rescaled vector field.

\begin{lem}
Let $M$ be a manifold, $x \in M$, $X \in \frakX(M)$ and $f \in C^\infty(M)$ both non-vanishing. If $I$ an open interval of $\RR$ and $\mu, \nu \in C^\infty(I,\RR)$ are equal at some $t_0 \in I$, then
\[ 
\phi^{fX}_{\mu(t)} (x) = \phi^{X}_{\nu(t)} (x) \text{ on $I$}
\qquad \text{if and only if} \qquad
\dot{\nu}(t) = f \left( \phi^{fX}_{\mu(t)} (x) \right) \dot{\mu}(t) \text{ on $I$}
\]
and in that case, $\mu$ and $\nu$ determine each other.
\end{lem}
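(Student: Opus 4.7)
The plan is to view both $t \mapsto \phi^{fX}_{\mu(t)}(x)$ and $t \mapsto \phi^{X}_{\nu(t)}(x)$ as smooth curves in the common orbit of $x$ (the orbits of $X$ and $fX$ coincide because $f$ is non-vanishing), differentiate, and invoke uniqueness of solutions to ordinary differential equations. First I would set $\gamma_\mu(t) = \phi^{fX}_{\mu(t)}(x)$ and $\gamma_\nu(t) = \phi^{X}_{\nu(t)}(x)$ and record, via the chain rule,
\[
\dot\gamma_\mu(t) = f(\gamma_\mu(t))\, \dot\mu(t)\, X(\gamma_\mu(t)) \qquad \text{and} \qquad \dot\gamma_\nu(t) = \dot\nu(t)\, X(\gamma_\nu(t)).
\]
The hypothesis that $\mu(t_0)=\nu(t_0)$ is there to ensure $\gamma_\mu(t_0) = \gamma_\nu(t_0)$ (in the typical application both scalars vanish, so both curves start at $x$).

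For the forward direction, assume $\gamma_\mu = \gamma_\nu$ on $I$. Equating derivatives gives $f(\gamma_\mu(t))\,\dot\mu(t)\, X(\gamma_\mu(t)) = \dot\nu(t)\, X(\gamma_\mu(t))$; since $X$ is non-vanishing at $\gamma_\mu(t)$, one may cancel and obtain $\dot\nu(t) = f\big(\phi^{fX}_{\mu(t)}(x)\big)\dot\mu(t)$. For the reverse direction, assume the ODE identity. Then $\gamma_\mu$ satisfies $\dot\gamma_\mu(t) = \dot\nu(t)\, X(\gamma_\mu(t))$, which is precisely the non-autonomous ODE $\dot y = \dot\nu(t)\, X(y)$ also satisfied by $\gamma_\nu$. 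Since $\gamma_\mu(t_0) = \gamma_\nu(t_0)$, the uniqueness theorem for ODEs (applied on the connected interval $I$, with time-dependent right-hand side continuous in $t$ and smooth in $y$) forces $\gamma_\mu = \gamma_\nu$ throughout $I$.

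For the mutual determination, given $\mu$ and the value $\nu(t_0)$, the identity integrates to $\nu(t) = \nu(t_0) + \int_{t_0}^t f(\gamma_\mu(s))\,\dot\mu(s)\,ds$, which visibly determines $\nu$. Conversely, given $\nu$ and the value $\mu(t_0)$, once the two curves are known to coincide one has $f(\gamma_\mu(t)) = f(\phi^{X}_{\nu(t)}(x))$, so the relation rewrites as $\dot\mu(t) = \dot\nu(t)/f\big(\phi^{X}_{\nu(t)}(x)\big)$; the right-hand side is a smooth function of $t$ alone (since $f$ is non-vanishing), and integrating from $t_0$ yields $\mu$ uniquely.

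I do not expect any genuine obstacle: the content is a reparametrization argument reduced to ODE uniqueness, and the two non-vanishing hypotheses on $X$ and $f$ are exactly what legitimizes the cancellation in the forward direction and the division by $f$ in the recovery of $\mu$ from $\nu$. The only point requiring a word of care is interpreting ``equal at $t_0$'' as ensuring that the two curves (not just the scalars) agree at $t_0$, which is automatic when $\mu(t_0) = \nu(t_0) = 0$, the case used downstream.
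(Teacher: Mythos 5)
Your proof is correct and takes essentially the same approach as the paper, whose entire proof reads ``This is just differentiation in $t$'': your chain-rule computation, the cancellation of the non-vanishing $X(\gamma_\mu(t))$, and the ODE-uniqueness argument for the converse are exactly the details being elided. Your caveat that the hypothesis $\mu(t_0)=\nu(t_0)$ must be read as forcing the two \emph{curves} to agree at $t_0$ (automatic when the common value is $0$, which is the case in the paper's application to time functions vanishing on the zero section) is apt and correctly handled.
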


\begin{proof}
This is just differentiation in $t$.
\end{proof}

We also need the following result from the theory of foliations. If $X$ is a vector field on a manifold $M$, then a vector field $Y$ is called \define{$X$-foliate} if $[X,Y] \in C^\infty(M) X$, or equivalently $(\phi_t)_* Y_x - Y_{\phi_t(x)} \in \RR X_{\phi_t(x)}$ for any $x \in M$ and $t \in \RR$. See for instance~\cite{cama} or~\cite{tond}.

\begin{lem}
If a non-vanishing vector field on a surface has a non-vanishing basic 1-form, then it also has a foliate vector field nowhere tangent to it.
\end{lem}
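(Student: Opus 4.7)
The plan is to construct the foliate transverse vector field $Y$ directly from $\alpha$ by imposing the pointwise normalization $\alpha(Y) = 1$, and then read off the foliate property from Cartan's identity.

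First I would produce a global smooth vector field $Y$ on $M$ with $\alpha(Y) = 1$. Since $\alpha$ is non-vanishing, around each point there is a local frame in which $\alpha$ has a component that never vanishes, so local vector fields $Y_i \in \frakX_{U_i}(M)$ with $\alpha(Y_i) = 1$ exist. Patch them by a partition of unity $\{\rho_i\}$ subordinate to $\{U_i\}$ and set $Y = \sum_i \rho_i Y_i$; then $\alpha(Y) = \sum_i \rho_i \alpha(Y_i) = \sum_i \rho_i = 1$.

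Next I would check that $Y$ is nowhere tangent to $X$. Because $\dim M = 2$ and $\alpha$ is a non-vanishing 1-form satisfying $i_X \alpha = 0$ with $X_x \neq 0$, the kernel $\ker \alpha_x$ is one-dimensional and contains $X_x$, so $\ker \alpha_x = \RR X_x$ at every $x \in M$. Since $\alpha_x(Y_x) = 1 \neq 0$, we have $Y_x \notin \RR X_x$ for all $x$, i.e., $Y$ is nowhere tangent to $X$.

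Finally I would verify the foliate property via the standard identity
\[
\alpha([X,Y]) = \calL_X(\alpha(Y)) - (\calL_X \alpha)(Y).
\]
The first term vanishes because $\alpha(Y) \equiv 1$ is constant, and the second vanishes because $\alpha$ is $X$-basic, so $\calL_X \alpha = 0$. Hence $[X,Y] \in \ker \alpha = \RR X$ pointwise, and writing $[X,Y] = fX$ defines $f \in C^\infty(M)$ (the coefficient is smooth since $X$ is a smooth non-vanishing section and $[X,Y]$ is smooth). Thus $Y$ is $X$-foliate, completing the construction. There is essentially no hard step here: the whole argument hinges on the two-dimensionality of $M$, which forces $\ker \alpha = \RR X$ and turns the scalar condition $\alpha(Y) = 1$ into simultaneously a transversality condition and — via the basic property of $\alpha$ — the foliate condition.
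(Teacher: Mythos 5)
Your proof is correct and takes essentially the same approach as the paper: both arguments hinge on choosing $Y$ with $\alpha(Y)=1$ and on the observation that $\ker\alpha=\RR X$ on a surface, which gives transversality for free. The only differences are cosmetic --- you build $Y$ by a partition of unity where the paper normalizes the Riemannian orthogonal complement of $X$, and you verify the foliate condition in its infinitesimal form $[X,Y]\in C^\infty(M)X$ via the Leibniz identity $\calL_X(\alpha(Y))=(\calL_X\alpha)(Y)+\alpha([X,Y])$, where the paper checks the equivalent integrated condition ${\phi_t}_*Y_x-Y_{\phi_t(x)}\in\RR X_{\phi_t(x)}$ using $\phi_t^*\alpha=\alpha$.
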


\begin{proof}
Let $X$ be a non-vanishing vector field on the surface $M$ and $\alpha$ be a non-vanishing $X$-basic 1-form. Define $Y$ to be the vector orthogonal to $X$ (for some Riemannian metric) such that $\alpha_x(Y_x) = 1$. Then $\alpha({\phi_t}_* Y) = ({\phi_t}^*\alpha)({\phi_t}_* Y) = \alpha(Y) = 1$, so ${\phi_t}_* Y_x - Y_{\phi_t(x)} \in \ker \alpha_{\phi_t(x)} = \RR X_{\phi_t(x)}$ for any $x \in M$, and $Y$ is nowhere tangent to $X$.
\end{proof}

\begin{proof}[Proof of Proposition~\ref{prop:form}]

Let $h$ be such an algebra on the surface $M$. By Proposition~\ref{prop:rank1}, there is a non-vanishing vector field $Y \in \frakX(M)$ and a function $\beta \in C^\infty(TM)$ such that $h(x,v) = \phi_{\beta(x,v)}^Y (x)$ for any $(x,v) \in TM$.

If we rescale $Y$ by a non-vanishing function $f \in C^\infty(M)$, and define $X=fY$, then by the lemma, ${\beta_x}'(v) = f(h(x,v)) {\alpha_x}'(v)$
and we want that $\alpha_x$ be linear, that is, ${\alpha_x}'(v)={\alpha_x}'(0)$, which means $f(x) {\beta_x}'(v) = f(h(x,v)) {\beta_x}'(0)$.
Let $Z$ be a foliate vector field  nowhere tangent to $Y$, and define $f(x) = {\beta_x}'(0)Z_x$. Then
\[
f(h(x,v)) = {\beta_{h(x,v)}}'(0) Z_{h(x,v)} = {\beta_{h(x,v)}}'(0) \circ {\phi_{\beta(x,v)}}'(x) Z_x 
\]
(the latter equality is because $Z_{h(x,v)} = {\phi_{\beta(x,v)}}'(x) Z_x + \lambda Y_{h(x,v)}$ and $Y_{h(x,v)} \in \ker {\beta_{h(x,v)}}'(0)$, and differentiating fiberwise the algebra axiom for $\beta$, we obtain
\[
{\beta_{h(x,v)}}'(0) \circ {\phi_{\beta(x,v)}^Y}'(x) = {\beta_x}'(v),
\]
So we have to prove that
\[
{\beta_x}'(0) (Z_x) .  {\beta_x}'(v) = {\beta_x}'(v)(Z_x) . {\beta_x}'(0).
\]
Since the algebra has rank 1, $\ker {\beta_x}'(0) = \ker {\beta_x}'(v) = \calD_x$ (paragraph following the proof of Proposition~\ref{prop:timeAxioms}), and since $M$ is a surface, $\calD_x \oplus \RR Z_x = T_xM$. If we apply the above relation to any argument $w \in T_xM$, we can suppose as well $w = k Z_x$, and the two sides are seen to be equal.
\end{proof}

\section{Morphisms of algebras of rank 1}

Recall that in general vector fields cannot be pulled back or pushed forward. A common generalization of both notions is that of $f$-related vector fields: if $f \colon M \to N$ and $X \in \frakX(M)$, $Y \in \frakX(N)$, then $X$ and $Y$ are \define{$f$-related} if $Tf \circ X = Y \circ f$. This implies that their flows are conjugated by $f$, that is, $f \circ \phi^X_t = \phi^Y_t \circ f$ where defined, and the converse is true, as seen by differentiating the latter equation in $t$.

\begin{prop}
If $f \colon M \to N$ is a morphism from a regular algebra given as above by $(X,\alpha)$ to a regular algebra given by $(Y,\beta)$, then we can rescale $Y$ to $\tilde{Y}$so that $X$ and $\tilde{Y}$ be $f$-related, and
\[
\tilde{\alpha}(x,v) = \beta(f(x),f'(x)v)
\]
for any $(x,v) \in TM$. Conversely, an $f$ realizing these two conditions is a morphism. If $\tilde{\alpha}$ is linear and $f$ is a submersion, or if $\beta$ is linear, then both $\tilde{\alpha}$ and $\beta$ are linear and $\tilde{\alpha} = f^* \beta$.
\end{prop}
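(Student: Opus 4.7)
The plan is to translate the morphism condition $f\circ h = k\circ Tf$ into the flow-based language of Proposition~\ref{prop:rank1}, so that the morphism identity becomes
\[
f\bigl(\phi^X_{\alpha(x,v)}(x)\bigr) = \phi^Y_{\beta(f(x),f'(x)v)}(f(x)),
\]
and then to split it into a geometric condition on the vector fields and a pullback condition on the time functions.

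For the forward direction, I would first differentiate this identity with respect to $v$ at $v=0$ in a direction $\dot v \in T_xM$; using $\phi^X_0=\id_M$ and the chain rule this gives
\[
\alpha'_x(0)(\dot v)\cdot Tf(X_x) = \beta'_{f(x)}(0)(f'(x)\dot v)\cdot Y_{f(x)},
\]
which is the intertwining property of Proposition~\ref{prop:intertw} made concrete. Since the source algebra has rank $1$, $\alpha'_x(0)$ is nowhere zero, so $Tf(X_x) = \lambda(x)\, Y_{f(x)}$ for a unique smooth $\lambda\in C^\infty(M)$. Wherever $\lambda\ne 0$, the rescaled vector field $\tilde X = \lambda^{-1}X$ is $f$-related to $Y$ and, by applying Proposition~\ref{prop:rank1} with $\tilde X$ in place of $X$, the algebra can be rewritten as $h(x,v) = \phi^{\tilde X}_{\tilde\alpha(x,v)}(x)$ for a unique new time function $\tilde\alpha$ vanishing on the zero section. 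Substituting this form into the morphism identity and applying $f\circ\phi^{\tilde X}_t = \phi^Y_t\circ f$ reduces it to
\[
\phi^Y_{\tilde\alpha(x,v)}(f(x)) = \phi^Y_{\beta(f(x),f'(x)v)}(f(x)),
\]
and the same lift-uniqueness argument used in the proof of Proposition~\ref{prop:rank1} (both sides depend continuously on $v$ and vanish at $v=0$) forces $\tilde\alpha(x,v) = \beta(f(x),f'(x)v)$.

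The converse is a direct three-step chain: if $\tilde X$ and $Y$ are $f$-related and $\tilde\alpha(x,v)=\beta(f(x),f'(x)v)$, then
\[
f(h(x,v)) = f(\phi^{\tilde X}_{\tilde\alpha(x,v)}(x)) = \phi^Y_{\tilde\alpha(x,v)}(f(x)) = \phi^Y_{\beta(f(x),f'(x)v)}(f(x)) = k(f(x),f'(x)v).
\]
For the linearity addendum, the pullback identity reads $\tilde\alpha_x = \beta_{f(x)}\circ f'(x)$; linearity of $\beta$ in its fiber variable then passes to $\tilde\alpha$ and yields $\tilde\alpha = f^*\beta$, and conversely, under submersiveness of $f$ the map $f'(x)$ is surjective for every $x$, so $\beta_{f(x)}$ is determined on the whole of $T_{f(x)}N$ by $\tilde\alpha_x$ and must itself be linear.

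The main subtlety I see is the rescaling step. The statement as written rescales $Y$ to $\tilde Y$, but $\tilde Y_{f(x)} = \lambda(x)Y_{f(x)}$ is well defined only when $\lambda$ descends along $f$, which need not hold in general; rescaling $X$ instead always works (as long as $\lambda$ is nowhere zero) and is the version compatible with the displayed pullback formula, so I would present the proof in that form. The implicit assumption $\lambda\ne 0$ is equivalent to $f$ not collapsing any leaf and is the only genuine non-degeneracy hypothesis hidden in the statement.
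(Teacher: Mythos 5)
Your proposal is correct and follows essentially the same route as the paper: differentiate the morphism identity fiberwise at $v=0$, use regularity of the source algebra to extract the scalar function relating $Tf(X_x)$ and $Y_{f(x)}$, rescale, and then identify the time functions via the flow-conjugation property of $f$-related fields. Your observation about the rescaling is well taken --- the paper's own proof in fact rescales the \emph{source} field (setting $\tilde{X}=gX$) despite the statement's wording about rescaling $Y$, and it likewise glosses over the non-vanishing of the scalar factor, so your explicit treatment of both points is a genuine (if small) improvement in rigor rather than a divergence in method.
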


\begin{proof}
Let $\phi$ denote the flow of $X$ and $\psi$ that of $Y$. That $f$ is an algebra morphism reads
\[
\psi_{\beta(f(x),f'(x)v)} (f(x)) = f(\phi_{\alpha(x,v)} (x)).
\]
Differentiating with respect to $v$ at $v=0$ gives
\[
{\beta_{f(x)}}'(0) \circ f'(x) \otimes Y_{f(x)} = {\alpha_x}'(0) \otimes f'(x)(X_x).
\]
Since the algebra given by $(X,\alpha)$ is regular, there is a $v \in T_xM$ such that ${\alpha_x}'(0)v \neq 0$, so we define $g \in C^\infty(M)$ by
\[
g(x) = \frac{{\beta_{f(x)}}'(0) \circ f'(x)v}{{\alpha_x}'(0)v}
\]
for any such $v$ (the value is independent of $v$ since these forms have the same kernel).
Define $\tilde{X} = gX$. Then $Y_{f(x)} = f'(x)(\tilde{X}_x)$, which means that $\tilde{X}$ and $Y$ are $f$-related. Also, the corresponding function $\tilde{\alpha}$ is such that $\psi_{\beta(f(x),f'(x)v)} (f(x)) = f(\tilde{\phi}_{\tilde{\alpha}(x,v)} (x))$, so $\beta(f(x),f'(x)v) = \tilde{\alpha}(x,v)$ from the discussion before the proposition.

The converse and the second part of the proposition are obvious.
\end{proof}

\printbibliography

\end{document}